\theoremstyle{plain}
\newtheorem{theorem}{Theorem}[section]
\newtheorem{proposition}[theorem]{Proposition}
\newtheorem{lemma}[theorem]{Lemma}
\newtheorem{corollary}[theorem]{Corollary}
\theoremstyle{definition}
\newtheorem{definition}[theorem]{Definition}
\newtheorem{example}[theorem]{Example}
\theoremstyle{remark}
\newtheorem{remark}[theorem]{Remark}
\numberwithin{equation}{section}
\newcommand{\bC}{\mathbb{C}}
\newcommand{\bQ}{\mathbb{Q}}
\newcommand{\bR}{\mathbb{R}}
\newcommand{\bZ}{\mathbb{Z}}
\newcommand{\cD}{\mathcal{D}}
\newcommand{\str}{\operatorname{Str}}
\newcommand{\des}{\mathrm{Des}} 
\newcommand{\suchthat}{\;|\;}
\newcommand{\dsc}{\mathrm{des}} 
\newcommand{\sym}{\mathrm{Sym}} 
\newcommand{\anti}{\operatorname{\mathrm{Anti}}}
\newcommand{\schub}{\mathfrak{S}}
\newcommand{\ds}[2]{\big\langle {#1}\big\rangle_{#2}}
\newcommand{\alpx}{\mathbf{x}}
\newcommand{\pipe}{\mathrm{PD}}
\newcommand{\wc}[2]{\mathcal{W}_{#1}^{(#2)}}\newcommand{\wcp}[1]{\mathcal{W}_{#1}^{'}}
\newcommand{\Lukwc}[1]{\mathcal{LC}_{#1}} 
\newcommand{\code}[1]{\mathrm{code}(#1)} 
\newcommand{\Luk}{\mathcal{LP}} 
\newcommand{\Cox}{\mathrm{Cox}}
\newcommand{\sgrp}{\mathrm{S}}
\newcommand{\sgrpp}{\mathrm{S}'}
\newcommand{\reduced}{\mathrm{Red}}
\newcommand{\flag}[1]{\mathrm{Flag}({#1})}
\newcommand{\Hess}{\mathcal{H}}
\newcommand{\Perm}{\mathrm{Perm}} 
\newcommand{\Pet}{\mathrm{Pet}} 
\newcommand{\class}{\tau}
\newcommand{\M}{\mathrm{M}}
\newcommand{\Mt}{\widetilde{\mathrm{M}}}
\newcommand{\D}{\mathcal{K}}
\newcommand{\divdiff}{\mathfrak{d}}
\begin{document}

\title[The permutahedral variety, mixed Eulerian numbers]{The permutahedral variety, mixed Eulerian numbers, and principal specializations of Schubert polynomials}

\author{Philippe Nadeau}
\address{Univ Lyon, CNRS, Universit\'e Claude Bernard Lyon 1, UMR
5208, Institut Camille Jordan, 43 blvd. du 11 novembre 1918, F-69622 Villeurbanne cedex, France}
\email{\href{mailto:nadeau@math.univ-lyon1.fr}{nadeau@math.univ-lyon1.fr}}

\author{Vasu Tewari}
\address{Department of Mathematics, University of Hawaii at Manoa, Honolulu, HI 96822, USA}
\email{\href{mailto:vvtewari@math.hawaii.edu}{vvtewari@math.hawaii.edu}}

\subjclass[2010]{Primary 14N10, 14N15, 05E05}

\keywords{cohomology class, divided symmetrization, flag variety,  mixed Eulerian numbers, Peterson variety, permutahedral variety, Schubert polynomials}

\begin{abstract}
	We compute the expansion of the cohomology class of the permutahedral variety in the basis of Schubert classes.  The resulting structure constants $a_w$ are expressed as a sum of \emph{normalized} mixed Eulerian numbers indexed naturally by reduced words of $w$.
The description implies that the $a_w$ are positive for all permutations $w\in S_n$ of length $n-1$, thereby answering a question of Harada, Horiguchi, Masuda and Park. We use the same expression to establish the invariance of $a_w$ under taking inverses and conjugation by the longest word, and subsequently establish an intriguing cyclic sum rule for the numbers.

We then move toward a deeper combinatorial understanding for the $a_w$ by exploiting in addition the relation to Postnikov's divided symmetrization. Finally, we are able to give a combinatorial interpretation for $a_w$ when $w$ is vexillary, in terms of certain tableau descents. It is based in part on a relation between the numbers $a_w$ and principal specializations of Schubert polynomials.

Along the way, we prove results and raise questions of independent interest about the combinatorics of permutations, Schubert polynomials and related objects. We also sketch how to extend our approach to other Lie types, highlighting in particular an identity of Klyachko.
\end{abstract}

\maketitle
\setcounter{tocdepth}{1}
\tableofcontents

\section{Introduction and statement of results}
\subsection{Background}
The (type $A$) complete flag variety $\flag{n}$ has been an active area of study for many decades.
In spite of its purely geometric origins, it interacts substantially with representation theory  and algebraic combinatorics.
By way of the intricate combinatorics involved in the study of its Schubert subvarieties, the study of $\flag{n}$ poses numerous intriguing questions.
The bridge between the geometry and topology of Schubert varieties and the associated algebra and combinatorics is formed in great part by Schubert polynomials, relying upon seminal work of Borel \cite{Bor53} and Lascoux-Sch\"utzenberger \cite{Las82}, followed by influential work of Billey-Jockusch-Stanley \cite{Bil93} and Fomin-Stanley \cite{FS94}.
A fundamental open problem at the intersection of algebraic combinatorics and enumerative algebraic geometry is that of finding a combinatorial rule for structure constants $c_{uv}^w$ arising in the product of Schubert polynomials $\mathfrak{S}_u\mathfrak{S}_v=\sum_{w}c_{uv}^w\mathfrak{S}_w$.
Geometrically, these constants encode certain intersection numbers of Schubert varieties.
We refer to them as the \emph{generalized Littlewood-Richardson (LR) coefficients} henceforth.

Hessenberg varieties are a relatively recent family of subvarieties of $\flag{n}$ introduced by De Mari, Procesi and Shayman \cite{MPS92} with inspiration from numerical analysis.
Their study has also revealed a rich interplay between geometry, representation theory and combinatorics \cite{And10,HT17,Ty07}, and the last decade has witnessed an ever-increasing interest with impetus coming from the study of chromatic quasisymmetric functions and its ramifications for the Stanley-Stembridge conjecture \cite{HP18, ShWa12,ShWa16}.
The study of the cohomology rings of Hessenberg varieties has been linked to the study of hyperplane arrangements and representations of the symmetric group \cite{HTM19,AbHo16,BC18,HarHor18}.
We refer the reader to Abe and Horiguchi's excellent survey article \cite{AbeHor19} and references therein for more details on the rich vein of mathematics surrounding Hessenberg varieties.

 To define a Hessenberg variety $\mathcal{H}(X, h)$ in $\flag{n}$, one needs an $n \times n$ matrix $X$ and a \emph{Hessenberg function} $h :[n] \to [n]$.
We fix $h$ to be $(2,3,\dots,n,n)$.
The \emph{permutahedral variety} $\Perm_n$ is the \emph{regular semisimple}  Hessenberg variety corresponding to this choice of $h$ and $X$ being a diagonal matrix with distinct entries along the diagonal.
This variety is a smooth toric variety whose fan comprises the Weyl chambers of the type A root system.
It appears in many areas in mathematics \cite{MaSh88,Kly85, Pro90}, and notably is a key player in the Huh-Katz resolution of the Rota-Welsh conjecture in the representable case \cite{HK12}. The \emph{Peterson variety} $\Pet_n$ is the \emph{regular nilpotent} Hessenberg variety defined with the same $h$, and with $X$ chosen to be the nilpotent matrix that has ones on the upper diagonal and zeros elsewhere. This variety has also garnered plenty of attention recently; see \cite{Ba17, Dr15, HaTy11, In15, IT16, IY12, Ri03}.

 It is known that for a given $h$, all {regular} Hessenberg varieties have the same class in the rational cohomology $H^*(\flag{n})$, see \cite{AFZ20_bis}.
We let $\class_n$ be this cohomology class for $h=(2,3,\dots,n,n)$, so we have $\class_n=[\Perm_n]=[\Pet_n]$.  Since $\Perm_n$ and $\Pet_n$ are irreducible subvarieties of $\flag{n}$ of complex dimension $n-1$, the class $\class_n$ lives in degree $(n-1)(n-2)$, and we may consider its Schubert class expansion
\begin{equation}
\label{eq:peterson_class_decomposition}
 \class_n=\sum_{w\in \sgrpp_n}a_w\sigma_{w_ow},
\end{equation}
where $\sgrpp_n$ denotes the set of permutations in $\sgrp_n$ of length $n-1$.
Given the geometric interpretation for the $a_w$ as certain intersection numbers, it follows that $a_w\in \bZ_{\geq 0}$.
\smallskip

\subsection{Motivation}
The main goal of this article is to develop a concrete understanding of the coefficients $a_w$ in \eqref{eq:peterson_class_decomposition}.
To put our results in context, we recall what earlier results say about these coefficients.
In fact, Anderson and Tymoczko \cite{And10} give an expansion for $[\mathcal{H}(X,h)]$ for arbitrary $h$ which involves multiplication of Schubert polynomials depending on \emph{length-additive} factorizations of a permutation $w_h$ attached to $h$; see Subsection~\ref{subsec:approach_AT} for details.
In general, transforming this expression into one in the basis of Schubert polynomials in a combinatorially explicit manner would require understanding generalized LR coefficients.
In fact, the special cases in which Anderson and Tymoczko provide explicit expansions in terms of Schubert polynomials are those for which combinatorial rules are indeed known \cite[Sections 5 and 6]{And10}.

The case of $\tau_n$ appears again in work of Harada et al \cite[Section 6]{HarHor18} as well as Kim \cite{Kim20}. In the former, $\tau_n$ is expressed as a sum of classes of Richardson varieties \cite[Theorem 6.4]{HarHor18}.
Yet again, translating this into an explicit expansion in terms of Schubert classes amounts to understanding certain generalized LR coefficients.

In light of this discussion, we are led to approach the question of providing a meaningful perspective on the $a_w$, and thereby $\tau_n$, via alternative means.
To this end we bring together work of Klyachko \cite{Kly85,Kly95} and Postnikov \cite{Pos09}, and explicitly describe the $a_w$ as certain sums of mixed volumes of hypersimplices.
In so doing, we unearth interesting connections between these numbers and the combinatorics of reduced words, principal specializations of Schubert polynomials, and enumeration of flagged tableaux.
\emph{Our work also brings forth certain properties of the $a_w$ that we do not know geometric reasons for}.
Furthermore, since we bypass the computation of generalized LR coefficients, our analysis of the $a_w$ sheds light on various relations that are imposed between the two quantities in question.
It is our hope that understanding classes of other regular Hessenberg varieties can advance our understanding of generalized LR coefficients.

\subsection{Main results}
We proceed to state our main results.
The reader is referred to Section~\ref{sec:preliminaries} for  undefined terminology.
Our first main result states that the $a_w$ are strictly positive, that is, the expansion in \eqref{eq:peterson_class_decomposition} has \emph{full support}.
This answers a problem posed by Harada et al \cite[Problem 6.6]{HarHor18}.
\begin{theorem}\label{thm:main_1}
	For $w\in \sgrpp_n$, we have that $a_w>0$ from the explicit formula
	\[
	a_w=\frac{1}{(n-1)!}\sum_{\mathbf{i}\in \mathrm{Red}(w)}A_{c(\mathbf{i})}.
	\]
	Furthermore, the following symmetries hold.
	\begin{itemize}
	\item $a_w=a_{w_oww_o}$ where $w_o$ denotes the longest word in $\sgrp_n$.
	\item $a_{w}=a_{w^{-1}}$.
	\end{itemize}
\end{theorem}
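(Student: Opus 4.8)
The plan is to establish the displayed formula first, and then read off positivity and the two symmetries as consequences of it.

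\emph{Reduction to divided symmetrization.} I would begin by extracting $a_w$ from \eqref{eq:peterson_class_decomposition} via the Poincar\'e pairing on $H^*(\flag{n})$. Since the Schubert basis is self-dual under this pairing, with $\int_{\flag{n}}\sigma_u\cdot\sigma_v=\delta_{w_ou,v}$, pairing $\class_n$ against $\sigma_w$ isolates the coefficient of $\sigma_{w_ow}$ and yields the intersection number $a_w=\int_{\flag{n}}\class_n\cdot\sigma_w$. The crucial step, and the main obstacle, is to identify this number with Postnikov's divided symmetrization $\langle\mathfrak{S}_w\rangle_n$ of the Schubert polynomial $\mathfrak{S}_w$. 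Because $\Perm_n$ is the toric variety of the permutahedron, intersecting against $[\Perm_n]$ is governed by the volume polynomial of the permutahedron; matching that computation with the operator $\langle\cdot\rangle_n$, by way of the results of Klyachko and Postnikov, is where the genuine work lies.

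\emph{Evaluating the symmetrization.} Since $w$ has length $n-1$, the polynomial $\mathfrak{S}_w$ is homogeneous of degree $n-1$, and $\langle\cdot\rangle_n$ restricts to a linear functional on such polynomials. I would expand $\mathfrak{S}_w$ through the Billey--Jockusch--Stanley formula as a sum over reduced words $\mathbf{i}\in\reduced(w)$ together with their compatible sequences, and then apply Postnikov's evaluation of $\langle\cdot\rangle_n$ on monomials in terms of mixed Eulerian numbers. The aim is to show that the entire contribution attached to a single reduced word $\mathbf{i}$ collapses to $A_{c(\mathbf{i})}$, where $c(\mathbf{i})$ records the multiplicity of each letter in $\mathbf{i}$; summing over $\mathbf{i}$ and dividing by $(n-1)!$ then produces the formula. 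Positivity is immediate from here: each $A_{c(\mathbf{i})}$ is the mixed volume of full-dimensional hypersimplices, hence strictly positive, and $\reduced(w)$ is nonempty, so $a_w>0$.

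\emph{The two symmetries.} These I would derive combinatorially from the formula. For inversion, the reversal map $\mathbf{i}\mapsto\reverse{\mathbf{i}}$ is a content-preserving bijection $\reduced(w)\to\reduced(w^{-1})$, so that $c(\reverse{\mathbf{i}})=c(\mathbf{i})$ and the two sums agree term by term, giving $a_{w^{-1}}=a_w$. For conjugation, the relation $w_os_iw_o=s_{n-i}$ induces a bijection $\reduced(w)\to\reduced(w_oww_o)$ sending $(i_1,\dots,i_{n-1})$ to $(n-i_1,\dots,n-i_{n-1})$, which reverses the content composition, $(c_1,\dots,c_{n-1})\mapsto(c_{n-1},\dots,c_1)$. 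The identity $a_w=a_{w_oww_o}$ then reduces to the reversal symmetry $A_{(c_1,\dots,c_{n-1})}=A_{(c_{n-1},\dots,c_1)}$ of mixed Eulerian numbers, which in turn follows from the isomorphism $\Delta_{k,n}\cong\Delta_{n-k,n}$ of hypersimplices induced by $x_i\mapsto 1-x_{n+1-i}$. Thus, once the bridge $a_w=\langle\mathfrak{S}_w\rangle_n$ is in place, the remaining ingredients are the monomial evaluation of $\langle\cdot\rangle_n$ and two standard bijections on reduced words.
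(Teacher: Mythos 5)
Your overall architecture (reduce to $a_w=\ds{\schub_w}{n}$ via the Poincar\'e pairing and the Anderson--Tymoczko representative, then evaluate) starts along the lines of Theorem~\ref{thm:anderson_tymoczko}, but the central step of your plan fails: the contribution of a single reduced word in the BJS expansion does \emph{not} collapse to $A_{c(\mathbf{i})}/(n-1)!$. Concretely, take $w=3142\in\sgrpp_4$, with $\reduced(w)=\{213,231\}$, both of content $(1,1,1)$, so each word would have to contribute $A_{(1,1,1,0)}/3!=1$. But the word $\mathbf{i}=(2,3,1)$ admits \emph{no} compatible sequences at all (one would need $b_1<b_2\le b_3\le 1$), so it contributes $0$ to $\schub_{3142}=x_1^2x_2+x_1^2x_3$, while the word $(2,1,3)$ contributes $2$. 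The identity $\ds{\schub_w}{n}=\frac{1}{(n-1)!}\sum_{\mathbf{i}\in\reduced(w)}A_{c(\mathbf{i})}$ holds only in the aggregate, and proving it in the aggregate is precisely the difficulty: the paper remarks explicitly (after Corollary~\ref{cor:aw_positivity}, and again in Section~\ref{sec:further_remarks} where the right-hand side is rewritten as $\ds{\Mt_w}{n}$) that passing between the two sides directly ``appears nontrivial'' and that their agreement reflects a phenomenon only explained in the companion work \cite{NT21}. Note also that Postnikov's monomial evaluation of $\ds{\cdot}{n}$ is the signed quantity $(-1)^{|S_c|}\beta_n(S_c)$, not a mixed Eulerian number; the $A_c$ arise as $\ds{y^c}{n}$ with $y_i=x_1+\cdots+x_i$, so there is no monomial-by-monomial translation available either.

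The paper's actual proof of the formula bypasses divided symmetrization entirely: Klyachko's computation of $H^*(\Perm_n)^{\sgrp_n}$ produces the algebra $\D_n$ together with the pullback $i^*\sigma_w=\M_w(u_1,\ldots,u_{n-1})/\ell(w)!$, giving $a_w=\int_{\D_n}\M_w(u)$ (Theorem~\ref{thm:klyachko}); the formula then follows from $\int_{\D_n}u^c=A_c/(n-1)!$, which is proved by checking that $(n-1)!\int_{\D_n}u^c$ satisfies Petrov's uniquely characterizing recurrence for mixed Eulerian numbers (Lemma~\ref{lem:uniqueness_via_recurrence_petrov}), the key relation being exactly the defining quadratic relation of $\D_n$. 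Granting the displayed formula, the rest of your argument is sound: positivity follows since each $A_{c(\mathbf{i})}>0$ (mixed volumes of full-dimensional hypersimplices) and $\reduced(w)\neq\emptyset$; the inverse symmetry follows from the content-preserving reversal bijection on reduced words; and the $w_o$-conjugation symmetry follows from $i\mapsto n-i$ together with the reversal symmetry $A_{(c_1,\ldots,c_{n-1},0)}=A_{(c_{n-1},\ldots,c_1,0)}$, which is valid (the paper obtains the same conclusion from the symmetry of the presentation of $\D_n$). So the gap is isolated in, but essential to, your derivation of the formula itself.
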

This theorem is the succinct version of the contents of Proposition~\ref{prop:stability}, Theorem~\ref{thm:positive_formula}, and Corollary~\ref{cor:aw_positivity}. Here $\mathrm{Red}(w)$ denotes the set of reduced words of $w$ and the $A_{c(\mathbf{i})}$ are certain mixed Eulerian numbers indexed by weak compositions $c(\mathbf{i})$ determined by reduced words for $w$.
These numbers were introduced by Postnikov \cite[Section 16]{Pos09} as mixed volumes of Minkowski sums of hypersimplices, and they generalize the classical Eulerian numbers.
Curiously, while geometry tells us that the $a_w$ are nonnegative integers, our formula expresses them as a sum of positive rational numbers.
That this sum is indeed integral hints at deeper reasons, which is what we explore subsequently.

Any permutation has a natural factorization into indecomposable permutations acting on disjoint intervals, where $u\in \sgrp_p$ is called indecomposable if the image of $[i]$ does not equal $[i]$ for $i=1,\ldots,p-1$; see Section~\ref{sub:indecomposable} for precise definitions. One may rotate such blocks, thus giving rise to cyclic shifts of the permutation $w$. Given $w\in \sgrpp_n$, let $w=w^{(1)}, w^{(2)},\ldots,w^{(k)}$ be its cyclic shifts.

Our next chief result is a \emph{cyclic sum rule}:
\begin{theorem}\label{thm:main_2}
	For $w\in \sgrpp_n$ and with the notation just established we have that
	\[
		\sum_{1\leq i\leq k}a_{w^{(i)}}=|\mathrm{Red}(w)|.
	\]
\end{theorem}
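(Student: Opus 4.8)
The plan is to feed the explicit formula of Theorem~\ref{thm:main_1} into the left-hand side and reduce the claim to a cyclic summation identity for mixed Eulerian numbers. Writing $a_{w^{(i)}}=\frac{1}{(n-1)!}\sum_{\mathbf{j}\in\mathrm{Red}(w^{(i)})}A_{c(\mathbf{j})}$ and summing over $i$, it suffices to show
\[
\sum_{i=1}^{k}\ \sum_{\mathbf{j}\in\mathrm{Red}(w^{(i)})}A_{c(\mathbf{j})}=(n-1)!\,|\mathrm{Red}(w)|.
\]
First I would record the combinatorics of reduced words under the indecomposable factorization $w=u_1\oplus\cdots\oplus u_m$: since the blocks act on disjoint consecutive intervals, every reduced word of $w$ is a shuffle of reduced words of the $u_t$, the number of such shuffles being the multinomial $\binom{n-1}{\ell(u_1),\dots,\ell(u_m)}$, and the content $c(\mathbf{j})$ depends only on the chosen block words: it is the concatenation $(c^{(1)},0,c^{(2)},0,\dots,0,c^{(m)})$ of the block contents separated by single zeros, where each $c^{(t)}$ has all parts positive because an indecomposable block has full support in its range. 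In particular $|\mathrm{Red}(w^{(i)})|=|\mathrm{Red}(w)|$ for every cyclic shift, and the multinomial factor is invariant under rotating the blocks.

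A key preliminary point is that $k=m$, i.e.\ the $m$ cyclic shifts are pairwise distinct. This follows from the length constraint: if the block sequence had a proper period, then $w=v^{\oplus r}$ with $r\geq 2$ on blocks of total size $q=n/r$, whence $r\,\ell(v)=n-1=rq-1$ and $\ell(v)=q-1/r$, an integer only if $r=1$. With this in hand, rotating the blocks of $w$ corresponds precisely to cyclically rotating the $m$ blocks of the separated content composition $\gamma=(c^{(1)},0,\dots,0,c^{(m)})$. After reinstating the common multinomial factor and summing over choices of block reduced words, the theorem reduces, for each such choice, to the following identity, which I regard as the crux: the cyclic block-rotations $\gamma_1,\dots,\gamma_m$ of $\gamma$ satisfy
\[
\sum_{j=1}^{m}A_{\gamma_j}=(n-1)!.
\]

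To prove this I would pass to divided symmetrization, using the identification of normalized mixed Eulerian numbers with divided symmetrizations of monomials, $\ds{x^{\gamma}}{n}=A_{\gamma}/(n-1)!$, so that the claim becomes $\sum_{j=1}^{m}\ds{x^{\gamma_j}}{n}=1$. The base case $m=1$ is clean: an indecomposable permutation of length $n-1$ has full support $\{1,\dots,n-1\}$, so each simple reflection occurs exactly once in any reduced word, forcing $\gamma=(1,\dots,1)$ and $\ds{x_1\cdots x_{n-1}}{n}=A_{(1,\dots,1)}/(n-1)!=1$. For $m\geq2$ the difficulty is that $\ds{\cdot}{n}$ is not itself cyclically invariant, since the denominator $\prod_{i=1}^{n-1}(x_i-x_{i+1})$ omits the wrap-around factor $(x_n-x_1)$; the plan is to track exactly this omitted factor across the $m$ rotations that bring a block boundary to the final coordinate, producing a telescoping cancellation whose total is $1$. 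A useful organizing principle throughout is that all cyclic shifts share a single Stanley symmetric function, $F_{w^{(i)}}=\prod_{t}F_{u_t}=F_w$, together with $|\mathrm{Red}(w)|=[x_1\cdots x_{n-1}]F_w$; this identifies the target value intrinsically and suggests matching $\sum_i a_{w^{(i)}}$ with the top squarefree coefficient of $F_w$.

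The main obstacle is the cyclic identity $\sum_{j}A_{\gamma_j}=(n-1)!$ itself. The delicate aspect is that it is genuinely a statement about summing \emph{distinct} mixed Eulerian numbers, as already the smallest cases show, so it cannot be reduced to term-by-term symmetries such as the reversal $A_{\gamma}=A_{\reverse{\gamma}}$; one must control the entire cyclic orbit at once. I expect the cleanest route is the divided-symmetrization computation sketched above, the only real subtlety being the careful bookkeeping of the wrap-around factor. Once that cancellation is established, summing over all tuples of block reduced words and reinstating the multinomial factor yields the theorem.
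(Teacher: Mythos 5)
Your reduction is correct and is essentially the paper's own: both arguments feed the formula $a_{w}=\frac{1}{(n-1)!}\sum_{\mathbf{i}\in\reduced(w)}A_{c(\mathbf{i})}$ into the left-hand side, match reduced words of the cyclic shifts $w^{(i)}$ with reduced words of $w$ so that the contents $c(\mathbf{i})$ get cyclically rotated, and then invoke a cyclic summation identity for mixed Eulerian numbers. (The paper's matching is slightly slicker --- it shifts every letter of $\mathbf{i}$ by $t_j$ modulo $n$, rather than going through shuffles of block words --- but your shuffle bookkeeping, including the multinomial factor and the observation that the content depends only on the tuple of block words, is sound, as is your coprimality argument that the shifts are distinct.)

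The genuine gap is at what you yourself call the crux: the identity $\sum_{j=1}^{m}A_{\gamma_j}=(n-1)!$ is asserted but not proved. This is exactly Proposition~\ref{prop:mixed_cyclic} of the paper (Postnikov's Theorem~16.4, reproved by Petrov), in the form: the sum of $A_{c'}/(n-1)!$ over \emph{all} $n$ cyclic shifts $c'$ of $c$ equals $1$, combined with the vanishing $A_{c'}=0$ whenever $c'_n>0$, which kills the $n-m$ shifts that are not block-rotations. The paper simply cites this known result, whereas you treat it as an open problem and offer only a plan (``track the omitted wrap-around factor $(x_n-x_1)$ and telescope''), which is never executed; Petrov's actual proof goes through the recursion $2A_{(\ldots,c_i,\ldots)}=A_{(\ldots,c_{i-1}+1,c_i-1,\ldots)}+A_{(\ldots,c_i-1,c_{i+1}+1,\ldots)}$ and a maximum-principle/random-walk argument, not a telescoping of denominators, so it is not clear your sketch can be completed as stated. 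Moreover the identification you propose to base it on, $\ds{x^{\gamma}}{n}=A_{\gamma}/(n-1)!$, is false in general: the paper defines $A_{\gamma}=\ds{y^{\gamma}}{n}$ with $y_i=x_1+\cdots+x_i$ (no $(n-1)!$), while $\ds{x^{\gamma}}{n}=(-1)^{|S_\gamma|}\beta_n(S_\gamma)$ is a different quantity (e.g.\ for $\gamma=(2,1,0,0)$ one has $\ds{x_1^2x_2}{4}=1$ but $A_\gamma/3!=1/3$); your base case $\gamma=(1^{n-1},0)$ happens to make both sides equal to $1$, which masks the error. Replacing your sketch of the crux by a citation of Proposition~\ref{prop:mixed_cyclic} (together with the observation that the non-block-rotations contribute $0$) closes the gap and recovers the paper's proof.
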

This theorem is stated as Theorem~\ref{thm:cyclic_sum} in Section~\ref{sec:properties}.
Again, the appearance of the number of reduced words on the right hand side is mysterious from a geometric perspective.
Furthermore, \emph{what explains the seemingly ad hoc appearance of the cyclic rotations of block factorizations in this context?}
Theorem~\ref{thm:main_2} hints at a potential refinement of the set of reduced words of $w$ that would provide a combinatorial interpretation to the $a_{w}$.
While we do not have such an interpretation in general, we obtain interpretations for important classes of permutations; we describe our results next.

Divided symmetrization is a linear form which acts on the space of polynomials in $n$ indeterminates of degree $n-1$.
This was introduced by Postnikov \cite{Pos09} in the context of computing volume polynomials of permutahedra.
In its most general form, this operator sends a polynomial $f(x_1,\dots,x_n)$ to a symmetric polynomial $\ds{f(x_1,\dots,x_n)}{n}$ as follows:
\begin{align}\label{eq:definition_ds}
  \ds{f(x_1,\dots,x_n)}{n}\coloneqq \sum_{w\in \sgrp_n}w\cdot\left(\frac{f(x_1,\dots,x_n)}{\prod_{1\leq i\leq n-1}(x_i-x_{i+1})}\right),
\end{align}
where $S_n$  acts by permuting variables.
For homogeneous $f$ of degree $n-1$, its divided symmetrization $\ds{f}{n}$ is a scalar, and it is in this context where our results are primarily set.
A  computation starting with the Anderson-Tymoczko class of the Peterson variety \cite{And10} leads us to the following conclusion already alluded to in the prequel \cite{DS} to this article  \textemdash{} \emph{for $w\in \sgrpp_n$, we have that $a_w=\ds{\mathfrak{S}_w}{n}$.}
We are thus able to leverage our earlier work to obtain a better handle on the $a_w$.
\smallskip

We introduce a class of permutations in $\sgrpp_n$ for which the corresponding $a_w$ are particularly nice.
We refer to these permutations as \L{}ukasiewicz permutations in view of how they are defined.
The set of \L{}ukasiewicz permutations has cardinality given by the $(n-1)$-th Catalan number.
A characteristic feature of these permutations is that a Schubert polynomial indexed by any such permutation is a sum of Catalan monomials (see \cite{DS}), and thus we have our next result.
\begin{theorem}\label{thm:main_3}
	For $w\in \mathcal{LP}_n$, we have that
	\[
	a_w=\mathfrak{S}_w(1,\dots,1).
	\]
	In particular, $a_w$ equals the number of reduced pipe dreams for any \L{}ukasiewicz permutation $w\in \sgrpp_n$.
\end{theorem}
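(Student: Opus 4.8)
The plan is to leverage the identity $a_w = \ds{\mathfrak{S}_w}{n}$, which holds for every $w \in \sgrpp_n$ and in particular for $w \in \Luk_n$, thereby reducing the computation of $a_w$ to a divided symmetrization of a Schubert polynomial. The engine of the argument is the structural result from \cite{DS} asserting that, for $w \in \Luk_n$, the Schubert polynomial $\mathfrak{S}_w$ is a sum of \emph{Catalan monomials}: every monomial $x^\alpha$ occurring in $\mathfrak{S}_w$ has exponent a weak composition $\alpha = (\alpha_1, \dots, \alpha_n)$ of $n-1$ whose partial sums satisfy the ballot condition $\alpha_1 + \cdots + \alpha_i \geq i$ for $1 \leq i \leq n-1$. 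Since $\ds{\cdot}{n}$ is a linear form, the whole theorem follows once we control $\ds{x^\alpha}{n}$ on a single Catalan monomial.

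The crux is the lemma that $\ds{x^\alpha}{n} = 1$ for every Catalan monomial $x^\alpha$. I would establish this through the evaluation of divided symmetrization on monomials: expanding \eqref{eq:definition_ds}, clearing the denominator $\prod_{1 \leq i \leq n-1}(x_i - x_{i+1})$, and symmetrizing produces a signed sum whose value is governed by the partial sums of $\alpha$. The point is that the ballot condition $\alpha_1 + \cdots + \alpha_i \geq i$ is exactly what forces this signed sum to collapse to $1$; concretely, one argues by induction on $n$ via the recursion relating $\ds{\cdot}{n}$ to $\ds{\cdot}{n-1}$, with the Catalan condition guaranteeing at each stage that a single boundary contribution survives. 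This lemma, together with the characterization of Catalan monomials, is the substance already isolated in \cite{DS}; I expect pinning down the precise form of the cancellation — rather than any of the surrounding bookkeeping — to be the main obstacle. It is worth stressing that one does \emph{not} get a converse: for small $n$ there are non-Catalan monomials with $\ds{\cdot}{n} = 1$ as well, so the argument genuinely uses the one-sided implication supplied by \cite{DS}, not an equivalence.

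Granting the lemma, the proof concludes by linearity. Writing $\mathfrak{S}_w = \sum_\alpha c_\alpha x^\alpha$ with every occurring $\alpha$ a Catalan composition, we obtain
\[
a_w = \ds{\mathfrak{S}_w}{n} = \sum_\alpha c_\alpha \ds{x^\alpha}{n} = \sum_\alpha c_\alpha = \mathfrak{S}_w(1, \dots, 1),
\]
where the last equality is the specialization of all variables to $1$; note that this requires no multiplicity-freeness, only that every monomial be Catalan. For the final clause, I would invoke the monomial expansion of Schubert polynomials over reduced pipe dreams, $\mathfrak{S}_w = \sum_{D \in \pipe(w)} \prod_{(i,j) \in D} x_i$, due to Billey--Jockusch--Stanley \cite{Bil93} and Fomin--Stanley \cite{FS94}. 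Setting all variables equal to $1$ sends each summand to $1$, whence $\mathfrak{S}_w(1, \dots, 1) = |\pipe(w)|$; combined with the displayed identity this gives $a_w = |\pipe(w)|$, the number of reduced pipe dreams of $w$.
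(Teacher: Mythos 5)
Your proof is correct and follows essentially the same route as the paper: both reduce to $a_w=\ds{\schub_w}{n}$, observe that every monomial of $\schub_w$ for $w\in\Luk_n$ has a ballot (Catalan/\L{}ukasiewicz) exponent vector, and conclude via the evaluation $\ds{x^c}{n}=1$ for such monomials, which is the case $S_c=\emptyset$ of the monomial formula \eqref{eq:monomial_evaluation}, followed by the pipe dream interpretation of $\schub_w(1,\dots,1)$. The only cosmetic difference is that you import the Catalan-monomial property as a black box from \cite{DS}, whereas the paper re-derives it on the spot: the bottom pipe dream has weight $\code{w}\in\Lukwc{n}$, and ladder moves (Theorem~\ref{thm:ladder_moves}) only shift a $+$ to an earlier row, so the ballot condition propagates to all of $\pipe(w)$.
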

In particular it follows that for $132$-avoiding and $213$-avoiding permutations $w\in \sgrpp_n$, we have that $a_w=1$. Another special case concerns Coxeter elements, for which $\mathfrak{S}_w(1,\dots,1)$ can be expressed as the number of permutations in $\sgrp_{n-1}$ with a given descent set depending on $w$.
Theorem~\ref{thm:main_3} is stated as Theorem~\ref{thm:Lukasiewicz_permutations}.

Our final results concern the important class of permutations known as vexillary permutations, starting with the larger class of quasiindecomposable permutations.
To state our results we need some more notation.
Permutations of the form $1^a\times u\times 1^b$ for $u$ indecomposable and $a,b\geq 0$,  are said to be \emph{quasiindecomposable}. Here $1^a\times u\times 1^b$ denotes the permutation obtained from $u$ by inserting $a$ fixed points at the beginning and $b$ fixed points at the end.

Set $\nu_u(j)\coloneqq \mathfrak{S}_{1^j\times u}(1,1,\dots)$ for $j\geq 0$. The following is presented as Theorem~\ref{thm:quasiindecomposable} later.
\begin{theorem}\label{thm:main_4}
Let $u\in \sgrp_{p+1}$ be an indecomposable permutation of length $n-1$.  We have that
\[
\sum_{j\geq 0}\nu_u(j)t^j=\frac{{\displaystyle{\sum_{m=0}^{n-p-1}a_{1^m\times u\times 1^{n-p-1-m}}t^m}}}{(1-t)^n},
\]
\end{theorem}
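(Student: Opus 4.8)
The plan is to evaluate the left-hand side through Macdonald's principal specialization formula, rewrite the resulting sum one reduced word at a time, and recognize each summand via Postnikov's generating function for mixed Eulerian numbers; the formula of Theorem~\ref{thm:main_1} then glues everything together.

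First I would compute $\nu_u(j)$ combinatorially. Prepending $j$ fixed points shifts every letter of a reduced word up by $j$, so $\mathbf{a}\mapsto \mathbf{a}+(j,\dots,j)$ is a bijection $\mathrm{Red}(u)\to \mathrm{Red}(1^j\times u)$, and $\ell(1^j\times u)=n-1$. Writing $c_v(\mathbf{a})$ for the number of occurrences of the letter $v$ in $\mathbf{a}$, Macdonald's formula for the specialization at all variables equal to $1$ gives
\[
\nu_u(j)=\frac{1}{(n-1)!}\sum_{\mathbf{a}\in \mathrm{Red}(u)}\ \prod_{v=1}^{p}(v+j)^{c_v(\mathbf{a})},
\]
the product depending on $\mathbf{a}$ only through its content, with every $c_v(\mathbf{a})\ge 1$ precisely because $u$ is indecomposable. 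Multiplying by $t^j$, summing over $j$, and interchanging summation reduces the theorem to the \emph{per-content identity}
\[
\sum_{j\ge 0}\Big(\prod_{v=1}^{p}(v+j)^{c_v}\Big)t^{j}=\frac{1}{(1-t)^{n}}\sum_{m=0}^{n-p-1}A_{(0^{m},\,c_1,\dots,c_p,\,0^{\,n-p-1-m})}\,t^{m},
\]
valid for every positive composition $(c_1,\dots,c_p)$ of $n-1$.

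Establishing this identity is the main obstacle, and it is essentially Postnikov's description of the mixed Eulerian numbers \cite{Pos09}: padding a positive composition by $m$ leading and $n-p-1-m$ trailing zeros produces the mixed Eulerian number equal to the coefficient of $t^{m}$ in the $h^{\ast}$-polynomial of the polynomial $j\mapsto \prod_{v}(v+j)^{c_v}$. Two points require attention. The numerator has degree at most $n-p-1$ rather than $n-1$: setting $Q(j)=\prod_{v=1}^{p}(j+v)^{c_v}$ one has $Q(-1)=\cdots=Q(-p)=0$, and the standard reciprocity for $\sum_{j\ge 0}Q(j)t^{j}=h(t)/(1-t)^{\deg Q+1}$ then forces the top $p$ coefficients of $h$ to vanish, so $\deg h\le (n-1)-p$; this is exactly where indecomposability ($c_v\ge 1$) enters. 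One must also match Postnikov's normalization of $A_{\mathbf{c}}$ with that of Theorem~\ref{thm:main_1}, which I would fix on the single-hypersimplex cases and propagate through the multilinearity of mixed volumes.

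Finally I would assemble the pieces. Adding $m$ to every letter identifies $\mathrm{Red}(1^{m}\times u\times 1^{\,n-p-1-m})$ with $\mathrm{Red}(u)$ and turns the content $c(\mathbf{a})$ into $(0^{m},c_1(\mathbf{a}),\dots,c_p(\mathbf{a}),0^{\,n-p-1-m})$, while appending fixed points changes neither the reduced words nor the Schubert polynomial and merely places the permutation in $\mathrm{S}_n$. Hence Theorem~\ref{thm:main_1} reads
\[
a_{1^{m}\times u\times 1^{\,n-p-1-m}}=\frac{1}{(n-1)!}\sum_{\mathbf{a}\in \mathrm{Red}(u)}A_{(0^{m},\,c_1(\mathbf{a}),\dots,c_p(\mathbf{a}),\,0^{\,n-p-1-m})}.
\]
Substituting the per-content identity into the expression for $\sum_{j}\nu_u(j)t^{j}$, summing over $\mathbf{a}\in\mathrm{Red}(u)$, and reading off the coefficient of $t^{m}$ then produces exactly the claimed generating function. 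The running example $u=s_1s_2s_1\in \mathrm{S}_3$ (so $p=2$, $n=4$) is a useful check: there $\nu_u(j)=\tfrac{1}{6}(1+j)(2+j)(3+2j)$, and clearing $(1-t)^{4}$ leaves the numerator $1+t$, in agreement with $a_{(3,2,1,4)}=a_{(1,4,3,2)}=1$.
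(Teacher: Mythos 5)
Your proposal is correct and follows essentially the same route as the paper: both arguments combine Macdonald's identity \eqref{eq:macdonald_poly}, the shift bijection $\reduced(u)\to\reduced(u^{[m]})$, the per-content generating function of Proposition~\ref{prop:mixed_connected}, and Theorem~\ref{thm:positive_formula} to assemble the numerator coefficients into $a_{u^{[m]}}$. The only difference is cosmetic — you sum over $j$ first and then over reduced words, while the paper sums the per-reduced-word identity directly — and your sketch of the degree bound for the per-content identity is a detail the paper delegates to the cited Proposition~\ref{prop:mixed_connected}.
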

We now come to our last result, which is of independent interest, making no mention of the numbers $a_w$.
We establish that in the case where $u$ is a vexillary permutation, the quantity $\nu_u(j)$ is essentially the order polynomial of a model of \emph{$(P,\omega)$-partitions} for appropriately chosen poset $P$ and labeling $\omega$.
We refer the reader to Section~\ref{sec:vexillary} for precise details, wherein the following result is stated as Theorem~\ref{thm:vexillary}.
\begin{theorem}\label{thm:main_5}
Let $u\in \sgrp_{p+1}$ be an indecomposable vexillary permutation with shape $\lambda\vdash n-1$. Then there exist a labeling $\omega_u$ of $\lambda$ and an integer $N_u\geq 0$ such that
\[
\sum_{j\geq 0}\nu_u(j)t^j=\frac{\displaystyle{\sum_{T\in \mathrm{SYT}(\lambda)} t^{\dsc(T;\omega_u)-N_u}}}{(1-t)^n},
\]
where $\mathrm{SYT}(\lambda)$ denotes the set of standard Young tableaux of shape $\lambda$.
\end{theorem}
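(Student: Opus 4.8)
The plan is to reduce everything to the combinatorics of flagged tableaux and then to invoke the theory of $(P,\omega)$-partitions. First I would use that a permutation is vexillary if and only if its Schubert polynomial is a flagged Schur polynomial: there is a partition $\lambda=\lambda(u)$ (the shape in the statement) and a weakly increasing flag $\phi=(\phi_1\le\cdots\le\phi_\ell)$ such that $\mathfrak{S}_u=s_\lambda^{\phi}$, the generating function of semistandard tableaux of shape $\lambda$ whose entries in row $r$ are at most $\phi_r$ (Wachs, Lascoux--Sch\"utzenberger). Prepending $j$ fixed points leaves the shape unchanged and shifts the flag uniformly, so $1^j\times u$ is again vexillary with shape $\lambda$ and flag $\phi+j$; consequently
\[
\nu_u(j)=\mathfrak{S}_{1^j\times u}(1,1,\dots)=\#\{\text{SSYT of shape }\lambda\text{ with row-}r\text{ entries}\le \phi_r+j\}.
\]

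The central step is to realise this count as the order polynomial of a $(P,\omega)$-partition model on the cell poset $P_\lambda$ of $\lambda$, whose $|\lambda|=n-1$ elements have linear extensions equal to $\mathrm{SYT}(\lambda)$. Concretely I would prove a Main Lemma: there is a labeling $\omega_u$ of $P_\lambda$ and an integer $N_u\ge 0$ with $\nu_u(j)=\Omega_{(P_\lambda,\omega_u)}(j+N_u)$ for all $j\ge 0$, where $\Omega_{(P_\lambda,\omega_u)}(m)$ counts $(P_\lambda,\omega_u)$-partitions with parts in $\{0,\dots,m\}$. The natural way to see this is to sum over tableaux: since rows are weakly increasing, the flag constraint binds only on the last cell of each row, so a column-strict tableau $T$ of shape $\lambda$ is admissible for the flag $\phi+j$ exactly when $j\ge J(T):=\max_r\big(T(r,\lambda_r)-\phi_r\big)$. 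Summing the geometric series in $j$ turns $\sum_{j\ge0}\nu_u(j)t^j$ into a generating function over all column-strict tableaux weighted by $t^{\max(0,J(T))}$, and the flag-normalized entries $T(r,k)-\phi_r$ are what should be matched with a $(P_\lambda,\omega_u)$-partition, the statistic $J(T)$ becoming the maximal part.

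The hard part, and the main obstacle, is the construction and verification of $\omega_u$ when the flag has a jump $\phi_{r+1}-\phi_r\ge 2$ between consecutive rows. Such jumps really occur for indecomposable vexillary permutations — for instance $w=3142$ has $\mathfrak{S}_w=x_1^2x_2+x_1^2x_3$, i.e.\ shape $(2,1)$ and flag $(1,3)$ — and for them the flag-normalized column relation degenerates into a non-monotone (``loose'') inequality that is not of the weak/strict type permitted in a $(P,\omega)$-partition, so no single shift of the cell poset yields a naive bijection. I would handle this by first recording the precise constraints a vexillary flag imposes on $\lambda$ (equal rows force equal flag entries, and jumps occur only at the inner corners of $\lambda$, bounded by the nesting of the rows of the Rothe diagram), and then choosing $\omega_u$ to be a suitable, generally non-natural, labeling of $P_\lambda$ that converts these loose relations into genuine weak/strict relations at the cost of the global shift $N_u$; the resulting identity of order polynomials is an equality of Ehrhart polynomials that I would verify either by a tailored bijection or directly on the level of $h^*$-polynomials. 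I expect this reconciliation of flag jumps with a labeling of the \emph{fixed} poset $P_\lambda$ to be the most delicate point.

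Finally, granting the Main Lemma, Stanley's fundamental theorem on $(P,\omega)$-partitions gives
\[
\sum_{m\ge 0}\Omega_{(P_\lambda,\omega_u)}(m)\,t^m=\frac{\sum_{T\in \mathrm{SYT}(\lambda)}t^{\dsc(T;\omega_u)}}{(1-t)^{n}},
\]
since $|P_\lambda|=n-1$ and its linear extensions are the standard Young tableaux of shape $\lambda$. Substituting $m=j+N_u$ and multiplying by $t^{-N_u}$ yields exactly the claimed expression for $\sum_{j\ge0}\nu_u(j)t^j$. As a consistency check one can compare with Theorem~\ref{thm:main_4}, which already provides the denominator $(1-t)^n$ together with a numerator $\sum_m a_{1^m\times u\times 1^{n-p-1-m}}t^m$ having nonnegative integer coefficients; matching the two numerators both confirms the computation and forces $\dsc(T;\omega_u)\ge N_u$ for every $T$, so that $N_u\ge 0$ may be taken equal to $\min_{T}\dsc(T;\omega_u)$.
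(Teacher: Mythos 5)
Your overall strategy coincides with the paper's: write $\nu_u(j)$ as a count of flagged tableaux of shape $\lambda$ with flag $\phi(u)+j$, realize this count as the order polynomial $\Omega_{(P_\lambda,\omega_u)}(j+N_u)$ of a labeled cell poset, and finish with Stanley's $(P,\omega)$-partition theorem. You have also correctly located the crux \textemdash{} reconciling a flag jump $\phi_{q+1}-\phi_{q}\geq 2$ with a single weak/strict labeling of the \emph{fixed} poset $P_\lambda$ \textemdash{} and correctly observed that the purely row-wise normalization $T(r,k)\mapsto T(r,k)-\phi_r$ produces ``loose'' column inequalities that no $(P,\omega)$-model can express. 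But that crux, your Main Lemma, is exactly where the proposal stops: you assert that a suitable non-natural labeling exists and say you would verify the resulting identity ``by a tailored bijection or on the level of $h^*$-polynomials,'' without supplying either. Since the theorem is precisely an existence statement for $\omega_u$ and $N_u$, this leaves the proof incomplete.

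The paper fills the gap with a construction you should compare against. It introduces a signature $\epsilon=(e,f)\in\{0,1\}^{l-1}\times\{0,1\}^{\lambda_1-1}$ prescribing strict increase between rows $i$ and $i+1$ when $e_i=1$ and between columns $j$ and $j+1$ when $f_j=1$; the resulting $\epsilon$-tableaux are literally $(P_\lambda,\omega_\epsilon)$-partitions for any compatible labeling. The shift $T_{i,j}\mapsto T_{i,j}-F_j+\bar{E}_i$ (with $F_j$ the partial sums of $f$ and $\bar{E}_i$ the partial sums of $1-e$) is then a bijection from $\epsilon$-tableaux with entries at most $N+1$ onto $\mathrm{SSYT}(\lambda,\phi_{\epsilon,N})$, where the flag $\phi_{\epsilon,N}$ has consecutive differences $\phi_{q+1}-\phi_q$ equal to the number of zeros of $e$ between rows $M_q$ and $M_{q+1}$ plus the number of ones of $f$ between columns $p_{q+1}$ and $p_q$. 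The missing idea in your sketch is that the vexillary inequalities $0\leq \phi_{q+1}-\phi_{q}\leq m_{q+1}+p_{q}-p_{q+1}$ are \emph{exactly} the solvability condition for distributing each jump over the $m_{q+1}$ row-edges and $p_q-p_{q+1}$ column-edges available in that range; it is this distribution, using both row and column strictness rather than a row shift alone, that converts each loose inequality into honest weak/strict cover relations, with the residual constant absorbed into $N_u=\max_q\bigl(F_{p_q}+E_{M_q}\bigr)$. Until that (or an equivalent) argument is carried out, the existence of $\omega_u$ remains unestablished; the rest of your outline, including the final appeal to Stanley's theorem and the consistency check against the quasiindecomposable sum rule, is sound.
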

In conjunction with Theorem~\ref{thm:main_4} above, this theorem yields a combinatorial interpretation for $a_w$ for $w$ vexillary. In the case $u$ is indecomposable Grassmannian (respectively dominant), the statistic $\dsc(T;\omega_u)$  in the statement of Theorem~\ref{thm:main_4} coincides with the usual descent (respectively ascent) statistic on standard Young tableaux for the appropriate choice of $\omega_u$.
\bigskip

\noindent {\bf Outline of the article:} Section ~\ref{sec:preliminaries} provides the necessary background on basic combinatorial notions attached to permutations, the cohomology of the flag variety, and some important properties of Schubert polynomials.
Section~\ref{sec:formulas} provides two perspectives on computing $a_w$, the first via Klyachko's investigation of the rational cohomology ring of $\Perm_n$, and the second via Postnikov's divided symmetrization and a formula due to Anderson and Tymoczko.
Section~\ref{sec:mixed_eulerian} introduces the mixed Eulerian numbers and surveys several of their properties, including a recursion that uniquely characterizes them. It also discusses Petrov's probabilistic take on these numbers.
In Section~\ref{sec:properties}, we use results of the preceding section to establish Theorems~\ref{thm:main_1},~\ref{thm:main_2} and ~\ref{thm:main_4}.
Section~\ref{sec:combinatorial_interpretation} discusses combinatorial interpretations for the $a_w$ in special cases. In particular, we discuss the case of  \L{}ukasiewicz permutations, Coxeter elements as well as Grassmannian permutations, proving \ref{thm:main_3} in particular.
Section~\ref{sec:vexillary} establishes our most general result as far as combinatorial interpretations go, by providing a complete understanding of the $a_w$ for vexillary $w$ through Theorem~\ref{thm:main_5}. Section 8 deals with the problem in general type $\Phi$, and includes Klyachko's reduced word identity for Schubert classes with its application the numbers $a_w^\Phi$.
We conclude with various remarks on further avenues and questions in Section~\ref{sec:further_remarks}.

\section{Preliminaries}
\label{sec:preliminaries}

\subsection{Permutations}
\label{sub:permutation_statistics}

We denote by $\sgrp_n$ the group of permutations of $\{1,\ldots,n\}$. We write an element $w$ of  $\sgrp_n$ in one line notation, that is, as the word $w(1)w(2)\cdots w(n)$. The permutation $w_o=w^n_o$ is the element $n(n-1)\cdots21$.
\smallskip

{\noindent \bf Descents:} An index $1\leq i<n$ is a \emph{descent} of $w\in \sgrp_n$ if $w(i)>w(i+1)$.
The set of such indices is the \emph{descent set} $\des(w)\subseteq [n-1]$ of $w$.
Given a subset $S\subseteq [n-1]$, define $\beta_n(S)$ to be the number of permutations $w\in\sgrp_n$ such that $\des(w)=S$.  If $n=4$ and $S=\{1,3\}$, one has $\beta_4(S)=|\{2143,3142,4132,3241,4231\}|=5$.\smallskip

{\noindent \bf Code and length:} The \emph{code} $\code{w}$ of a permutation $w\in\sgrp_n$ is the sequence $(c_1,c_2,\ldots,c_n)$ given by $c_i=|\{j>i\suchthat w(j)<w(i)\}|$.
 The map $w\mapsto \code{w}$ is a bijection from $\sgrp_n$ to the set  $C_n\coloneqq \{(c_1,c_2,\ldots,c_n)\suchthat 0\leq c_i\leq n-i,~1\leq i\leq n\}$.
 The {\em shape} $\lambda(w)$ is the partition obtained by rearranging the nonzero elements of the code in nonincreasing order.  The \emph{length} $\ell(w)$ of a permutation $w\in\sgrp_n$ is the number of {\em inversions}, i.e. pairs $i<j$ such that $w(i)>w(j)$. It is therefore equal to the sum $\sum_{i=1}^nc_i$ if $(c_1,\ldots,c_n)$ is the code of $w$. The permutation $w=3165274\in \sgrp_7$ has code $c(w)=(2,0,3,2,0,1,0)$, shape $\lambda(w)=(3,2,2,1)$ and length $8$.\smallskip

Let us recall the definition of the set $\sgrpp_n$, which naturally index the coefficients $a_w$:
\begin{equation}
\label{eq:Sn_prime}
\sgrpp_n\coloneqq \{w\in \sgrp_n \suchthat  \ell(w)=n-1\}.
\end{equation}

The cardinality of $\sgrpp_n$ for $n=1,\ldots,10$ is $|\sgrpp_n|=1,1,2,6,20,71,259,961,3606,13640$. The sequence occurs as number A000707 in the Online Encyclopaedia of Integer Sequences \cite{oeis}.\smallskip

{\noindent \bf Pattern avoidance:} Let $u\in \sgrp_k$ and $w\in \sgrp_n$ where $k\leq n$.
An occurrence of the pattern $u$ in $w$ is a sequence $1\leq i_1<\cdots <i_k\leq n$ such that $u(r)<u(s)$ if and only if $w(i_r)<w(i_s)$. We say that $w$ \emph{avoids} the pattern $u$ if it has no occurrence of this pattern and we refer to $w$ as \emph{$u$-avoiding}.
For instance, $35124$ has two occurrences of the pattern $213$ at positions $1<3<5$ and $1<4<5$. It is $321$-avoiding. \smallskip

{\noindent \bf Reduced words:} The symmetric group $\sgrp_n$ is generated by the elementary transpositions $s_i=(i,i+1)$ for $i=1,\ldots,n-1$.  Given $w\in \sgrp_n$, the minimum length of a word $s_{i_1}\cdots s_{i_l}$ in the $s_i$'s representing $w$ is the length $\ell(w)$ defined above, and such a word is called a \emph{reduced expression} for $w$. We denote by $\reduced(w)$ the set of all \emph{reduced words}, where $i_1\cdots i_l$ is a reduced word for $w$ if $s_{i_1}\cdots s_{i_l}$ is a reduced expression of $w$. For the permutation $w=3241$ of length $4$, $\reduced(w)=\{1231,1213,2123\}.$
With these generators, $\sgrp_n$ has a well-known Coxeter presentation given by the relations $s_i^2=1$ for all $i$, $s_is_j=s_js_i$ if $|j-i|>1$ and $s_is_{i+1}s_i=s_{i+1}s_is_{i+1}$ for $i<n-1$. These last two sets of relations are called the \emph{commutation relations} and \emph{braid relations} respectively. Note that $321$-avoiding permutations can be characterized as {\em fully commutative}: any two of their reduced expressions can be linked by a series of commutation relations \cite{Bil93}.\smallskip

{\noindent\bf The limit $\mathbf{\sgrp_\infty}$:} One has natural monomorphisms $\iota_n: \sgrp_n\to \sgrp_{n+1}$ given by adding the fixed point $n+1$.
One can then consider the direct limit of the groups $\sgrp_n$, denoted by $\sgrp_\infty$: it is naturally realized as the set of permutations $w$ of $\{1,2,3,\ldots\}$ such that $\{i\suchthat w(i)\neq i\}$ is finite. Any group $\sgrp_n$ thus injects naturally in $\sgrp_\infty$ by restricting to permutations for which all $i>n$ are fixed points.

Most of the notions we defined for $w\in \sgrp_n$ are well defined for $\sgrp_\infty$. The code can be naturally extended to $w\in\sgrp_\infty$ by defining $c_i=|\{j>i\suchthat w(j)>w(i)\}|$ for all $i\leq 1$. It is then a bijection between $\sgrp_\infty$ and the set of infinite sequences $(c_i)_{i\geq 1}$ such that $\{i\suchthat c_i>0\}$ is finite. The length $\ell(w)$ is thus also well defined. Occurrences of a pattern $u\in\sgrp_k$ are well defined in $\sgrp_\infty$ if $u(k)\neq k$\footnote{This restriction is necessary since for instance $4321$ avoids $213$ but $43215=\iota_4(4321)$ does not}. Reduced words extend naturally.

\subsection{Flag variety, cohomology and Schubert polynomials}
\label{sub:cohomology}

Here we review standard material that can be found for instance in \cite{Fulton,Man01,Bri05} and the references therein.

The flag variety $\flag{n}$ is defined as the set of complete flags $V_\bullet=(V_0=\{0\}\subset V_1\subset V_2 \subset \cdots \subset V_n=\bC^n)$ where $V_i$ is a linear subspace of $\bC^n$ of dimension $i$ for all $i$. For example,  $V^{std}_\bullet,V^{opp}_\bullet$ are the standard and opposite flags given by $V^{std}_i= \operatorname{span}(e_1,\ldots,e_i)$ and $V_i^{opp}=\operatorname{span}(e_{n-i+1},\ldots,e_n)$ respectively.
$\flag{n}$ has a natural structure of a smooth projective variety of dimension $\binom{n}{2}$.
It admits a natural transitive action of $GL_n$ via $g\cdot V_\bullet=(\{0\}\subset g(V_1)\subset g(V_2) \subset \cdots \subset \bC^n)$.
In fact $\flag{n}$ is part of the family of \emph{generalized flag varieties $G/B$}, with $G$ a connected reductive group and $B$ a Borel subgroup.
In this context, $\flag{n}$ corresponds to the type A case, with $G=GL_n$ and $B$ the group of upper triangular matrices.

 Given any fixed reference flag $V_\bullet^{ref}$, $\flag{n}$ has a natural affine paving given by Schubert cells $\Omega_w(V^{ref}_\bullet)$ indexed by permutations $w\in \sgrp_n$. As algebraic varieties one has $\Omega_w(V_\bullet^{ref})\simeq \bC^{\ell(w)}$ where $\ell(w)$ is the length of $w$. By taking closures of these cells, one gets the family of \emph{Schubert varieties} $X_w(V_\bullet^{ref})$.

The cohomology ring $H^*(\flag{n})$ with rational coefficients is a well-studied graded commutative ring that we now go on to describe.
It is known that to any irreducible subvariety $Y\subset \flag{n}$ of dimension $d$ can be associated a {\em fundamental class} $[Y]\in H^{n(n-1)-2d}(\flag{n})$. In particular there are classes $[X_{w}(V_\bullet^{ref})]\in H^{n(n-1)-2\ell(w)}$. These classes do not in fact depend on $V_\bullet^{ref}$, and we write $\sigma_w\coloneqq[X_{w_ow}(V_\bullet^{ref})]\in H^{2\ell(w)}(\flag{n})$.
The affine paving by Schubert cells implies that these \emph{Schubert classes} $\sigma_w$ form a linear basis of $H^*(\flag{n})$,
\begin{equation}
\label{eq:schubert_class_basis}
H^*(\flag{n})=\bigoplus_{w\in \sgrp_n}\bQ\sigma_w.
\end{equation}
Now given $Y$ irreducible of dimension $d$, we have an expansion of its fundamental class
\begin{equation}
\label{eq:class_expansion}
[Y]=\sum_wb_w\sigma_w,
\end{equation}
where the sum is over permutations of length $\ell(w_o)-d$. Then an important fact is that {\em $b_w$ is a nonnegative integer}. Indeed, $b_w$ can be interpreted as the number of points in the intersection of $Y$ with $X_{w}(V_\bullet^{ref})$ where $V_\bullet^{ref}$ is a \emph{generic} flag.\smallskip

One of the most important problems is to give a combinatorial interpretation to the coefficients when $Y=X_u(V^{std}_\bullet)\cap X_{w_ov}(V^{opp}_\bullet)$ with $u,v\in\sgrp_n$, that is $Y$ is a \emph{Richardson variety}.  Indeed the coefficients $b_w$ in this case are exactly the generalized LR coefficients $c_{uv}^w$ encoding the cup product in cohomology:
\begin{equation}
\label{eq:structure_coefficients}
\sigma_u\cup \sigma_v=\sum_{w\in\sgrp_n}c_{uv}^w \sigma_w.
\end{equation}

\subsection{Borel presentation and Schubert polynomials}

 Let $\bQ[\alpx_n]\coloneqq \bQ[x_1,\ldots, x_n]$ be the polynomial ring in $n$ variables.
 We denote the space of homogeneous polynomials of degree $k\geq 0$ in $\bQ[\alpx_n]$ by $\bQ^{(k)}[\alpx_n]$.
 Let $\Lambda_n\subseteq \bQ[\alpx_n]$ be the subring of symmetric polynomials in $x_1,\ldots,x_n$, and $I_n$ be the ideal of $\bQ[\alpx_n]$ generated by the elements $f\in \Lambda_n$ such that $f(0)=0$. Equivalently, $I_n$ is generated as an ideal by the elementary symmetric polynomials $e_1,\dots,e_n$. The quotient ring $R_n=\bQ[\alpx_n]/I_n$ is the \emph{coinvariant ring}.\smallskip

 Let $\divdiff_i$ be the divided difference operator on $\bQ[\alpx_n]$, given by
\begin{equation}
\divdiff_i(f)=\frac{f-s_i\cdot f}{x_i-x_{i+1}}.
\end{equation}

Define the \emph{Schubert polynomials} for $w\in \sgrp_n$ as follows: $\schub_{w_o}=x_1^{n-1}x_2^{n-2}\cdots x_{n-1}$, while if $i$ is a descent of $w$, let $\schub_{ws_i}=\divdiff_i\schub_w$. These are well defined since the $\divdiff_i$ satisfy the braid relations. For $w\in\sgrp_n$, the Schubert polynomial $\schub_w$ is a homogeneous polynomial of degree $\ell(w)$ in $\bQ[\alpx_n]$.
In fact Schubert polynomials are well defined for $w\in\sgrp_\infty$. Moreover, when $w\in\sgrp_\infty$ runs through all permutations whose largest descent is at most $n$, the  Schubert polynomials $\schub_w$  form a basis $\bQ[\alpx_n]$. \smallskip

Now consider the ring homomorphism
 \begin{equation}
 \label{eq:borel}
  j_n:\bQ[x_1,\ldots, x_n]\to H^*(\flag{n})
\end{equation} given by
  $j_n(x_i)=\sigma_{s_{i}}-\sigma_{s_i-1}$ for $i>1$ and $j_n(x_1)=\sigma_{s_{1}}$ (this is equivalent to the usual definition in terms of Chern classes). Then we have the following theorem, grouping famous results of Borel \cite{Bor53} and Lascoux and Sch\"utzenberger \cite{Las82}, see also \cite[Section 3.6]{Man01}.

 \begin{theorem}
 \label{thm:borel_LS}
 The map $j_n$ is surjective and its kernel is $I_n$. Therefore $H^*(\flag{n})$ is isomorphic as an algebra to $R_n$. Furthermore, $j_n(\schub_w)=\sigma_w$ if $w\in \sgrp_n$, and $j_n(\schub_w)=0$ if $w \in \sgrp_\infty-\sgrp_n$ has largest descent at most $n$.
 \end{theorem}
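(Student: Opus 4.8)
The plan is to establish the three assertions in sequence — the Borel isomorphism, the identity $j_n(\schub_w)=\sigma_w$ for $w\in\sgrp_n$, and the vanishing for $w\in\sgrp_\infty-\sgrp_n$ — and to obtain surjectivity of $j_n$ as a byproduct of the second assertion rather than proving it separately. First I would check the inclusion $I_n\subseteq\ker j_n$. Since $j_n(x_i)$ is the first Chern class of the line bundle $V_i/V_{i-1}$ on $\flag n$ (this is what the definition in terms of Chern classes encodes), the image $j_n(e_k(x_1,\dots,x_n))$ is the $k$-th Chern class of the trivial bundle $\flag n\times\bC^n$, which vanishes for $k\geq 1$. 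Hence every generator of $I_n$ lies in $\ker j_n$, and $j_n$ descends to a graded ring homomorphism $\ol{j}_n\colon R_n\to H^*(\flag n)$.

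Next I would prove $j_n(\schub_w)=\sigma_w$ for $w\in\sgrp_n$ by downward induction on codimension from the top class. The geometric ingredient is a family of divided difference operators $\divdiff_i^{\mathrm{geo}}$ on $H^*(\flag n)$, defined by pulling back and pushing forward along the $\mathbb{P}^1$-bundle that forgets $V_i$; BGG--Demazure theory shows that these act on the Schubert classes $\sigma_w$ by exactly the same recursion $\sigma_w\mapsto\sigma_{ws_i}$ (or $0$) that defines the Schubert polynomials from $\schub_{w_o}$ via $\divdiff_i$. I would then verify the intertwining relation $j_n\circ\divdiff_i=\divdiff_i^{\mathrm{geo}}\circ j_n$ — both operators are assembled from the $s_i$-action and division by $x_i-x_{i+1}$, with respect to which $j_n$ is equivariant — and check the base case $j_n(\schub_{w_o})=j_n(x_1^{n-1}\cdots x_{n-1})=\sigma_{w_o}=[\mathrm{pt}]$ by a direct top-degree computation identifying the staircase monomial with the point class. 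Applying the two compatible recursions step by step yields $j_n(\schub_w)=\sigma_w$ for all $w\in\sgrp_n$. Because the $\sigma_w$ span $H^*(\flag n)$, this immediately gives surjectivity of $\ol{j}_n$. The affine paving furnishes $\dim_\bQ H^{2\ell}(\flag n)=\#\{w\in\sgrp_n:\ell(w)=\ell\}$, so the Poincaré series of $H^*(\flag n)$ equals $[n]_q!=\prod_{i=1}^{n}(1+q+\cdots+q^{i-1})$, which is precisely the Hilbert series of the coinvariant ring $R_n$. Thus $\ol{j}_n$ is a graded surjection between spaces of equal finite dimension in each degree, hence a graded isomorphism, and in particular $\ker j_n=I_n$.

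It remains to treat $w\in\sgrp_\infty-\sgrp_n$ with largest descent at most $n$, where I must show $\schub_w\in I_n$, equivalently $j_n(\schub_w)=0$. By the code bijection recalled in Section~\ref{sub:permutation_statistics}, such a $w$ has $\code{w}=(c_1,\dots,c_n)\notin C_n$, whereas the Schubert polynomials indexed by $\sgrp_n$ already descend to a basis of $R_n$ by the previous paragraph. The route I would take is to invoke the freeness of $\bQ[\alpx_n]$ as a module over $\Lambda_n$ with basis $\{\schub_u:u\in\sgrp_n\}$; this identifies $I_n=\Lambda_n^{+}\cdot\bQ[\alpx_n]$ with the span of $\{e^{\mu}\schub_u:u\in\sgrp_n,\ \mu\neq 0\}$, and I would then match this description against the known basis $\{\schub_v:\text{largest descent}\leq n\}$ of $\bQ[\alpx_n]$ to force each remaining $\schub_w$ into $I_n$.

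The genuinely geometric heart of the argument is the intertwining $j_n\circ\divdiff_i=\divdiff_i^{\mathrm{geo}}\circ j_n$ together with the base-case identification of the top monomial with the point class; once these are in place the rest of the Borel part is formal. The subtlest remaining point, and the one I expect to require real care, is the final vanishing: equal graded dimensions alone do \emph{not} force $\schub_w\in I_n$ for $w\notin\sgrp_n$, since a priori $\schub_w$ could carry a nonzero component along the coinvariant basis even though the counts match. Pinning down that this component vanishes is exactly the Lascoux--Sch\"utzenberger content and genuinely uses the structure of Schubert polynomials rather than a soft dimension count.
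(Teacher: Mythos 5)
The paper does not actually prove this statement: it is presented as a package of classical results of Borel and Lascoux--Sch\"utzenberger, with the reader sent to \cite[Section 3.6]{Man01}. So the comparison is necessarily with the standard references rather than with an argument in the text. Your outline follows exactly that standard route, and most of it is sound: Borel's Chern-class computation gives $I_n\subseteq\ker j_n$; the BGG--Demazure push--pull operators satisfy the same recursion as the $\divdiff_i$, and together with the base case $j_n(x_1^{n-1}\cdots x_{n-1})=\sigma_{w_o}$ this yields $j_n(\schub_w)=\sigma_w$ for $w\in\sgrp_n$ and hence surjectivity; the Hilbert series $[n]_q!$ of $R_n$ then upgrades the surjection to an isomorphism and pins down the kernel. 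One phrasing to tighten: the ``intertwining'' $j_n\circ\divdiff_i=\divdiff_i^{\mathrm{geo}}\circ j_n$ cannot be justified by saying $j_n$ is equivariant with respect to division by $x_i-x_{i+1}$, since that operation has no meaning on $H^*(\flag{n})$; the correct statement is that $\divdiff_i$ preserves $I_n$ (because $\divdiff_i(fg)=f\,\divdiff_i(g)$ for $f\in\Lambda_n$), hence descends to $R_n$, and the descended operator is then checked to agree with the geometric one.

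The genuine gap is the final assertion, $j_n(\schub_w)=0$ for $w\in\sgrp_\infty-\sgrp_n$ with largest descent at most $n$, and you correctly diagnose it yourself. The route you propose does not close: freeness of $\bQ[\alpx_n]$ over $\Lambda_n$ with basis $\{\schub_u\}_{u\in\sgrp_n}$ gives a direct sum decomposition $\bQ[\alpx_n]=H_n\oplus I_n$ with $H_n$ the span of that basis, but ``matching'' this against the basis $\{\schub_v:\text{largest descent}\leq n\}$ is only a comparison of graded dimensions, and nothing in it forbids $\schub_w$ from having a nonzero component in $H_n$. A complete argument must compute that component. One standard way: the coefficient of $\sigma_v$ in $j_n(\schub_w)$ equals $\divdiff_{w_o}(\schub_w\schub_{w_ov})(0)=c^{\,w_o}_{w,\,w_ov}$, and this generalized LR coefficient vanishes because $c^{\,z}_{w,u}\neq 0$ forces $w\leq z$ in Bruhat order while no $w\in\sgrp_\infty-\sgrp_n$ satisfies $w\leq w_o^{(n)}$; alternatively one uses Macdonald's interpolation formula. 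Either way this is precisely the Lascoux--Sch\"utzenberger content, and as written your proposal maps the terrain accurately but leaves this step unproved.
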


It follows immediately that the product of Schubert polynomials is given by the structure coefficients in \eqref{eq:structure_coefficients}: If $u,v\in\sgrp_n$, then
\begin{equation}
\label{eq:structure_coefficients_schubert}
\schub_u\schub_v=\sum_{w\in\sgrp_n}c_{uv}^w \schub_w \mod I_n.
\end{equation}
\noindent It is also possible to work directly in  $\bQ[\alpx_n]$ and not the quotient $R_n$: the coefficients $c_{uv}^w$ are well defined for $u,v,w\in\sgrp_\infty$, and one has

\begin{equation}
\label{eq:structure_coefficients_schubert_2}
\schub_u \schub_v=\sum_{w\in\sgrp_\infty}c_{uv}^w \schub_w.
\end{equation}

\subsection{Expansion in Schubert classes and degree polynomials}
\label{sub:expansion_degree}

Given $\beta\in H^*(\flag{n})$, let $\int\beta$ be the coefficient of $\sigma_{w_o}$ in the Schubert class expansion. Then we have the natural \emph{Poincar\'e duality} pairing on $H^*(\flag{n})$ given by $(\alpha,\beta)\mapsto \int(\alpha\cup\beta)$. The Schubert classes are known to satisfy $\int \sigma_u\cup \sigma_v=1$ if $u=w_ov$ and $0$ otherwise, so that the pairing is nondegenerate.
If $A,B\in \bQ[\alpx_n]$ are such that $j_n(A)=\alpha,j_n(B)=\beta$, then one can compute the pairing explicitly by:
\begin{equation}
\label{eq:pairing_explicit}
\int(\alpha\cup\beta)=\divdiff_{w_o}(AB)(0),
\end{equation}
where the right hand side denotes the constant term in $\divdiff_{w_o}(AB)$.\smallskip

The rest of this section is certainly well known to specialists, though  perhaps not presented in this form.
 We simply point out that given a cohomology class, computing its expansion in terms of Schubert classes and its degree polynomial correspond to evaluating a given linear form on two different families of polynomials.

 Let us fix $\alpha\in H^{n(n-1)-2p}(\flag{n})$. Our main interest is to consider $\alpha=[Y]$ where $Y$ is an irreducible closed subvariety of $\flag{n}$ of dimension $p$.
Associated to $\alpha$ is the linear form $\psi_\alpha:\beta\mapsto \int (\alpha\cup \beta)$ defined on $H^*(\flag{n})$. It vanishes if $\beta$ is homogeneous of degree $\neq 2p$, which leads to the following definition.

\begin{definition} Given $\alpha\in H^{n(n-1)-2p}(\flag{n})$ define the linear form $\phi_\alpha:\bQ^{(p)}[\alpx_n]\to \bQ$ by $\phi_\alpha(P)=\psi_\alpha(j_n(P))$ where $j_n$ is the Borel morphism defined earlier.
\end{definition}

Note that by definition, $\phi_\alpha$ vanishes on $\bQ^{(p)}[\alpx_n]\cap I_n$. For polynomials $A,P\in\bQ[\alpx_n]$ such that $j_n(A)=\alpha$, we have by \eqref{eq:pairing_explicit} the expression
\begin{equation}
 \label{eq:phi_alpha}
\phi_\alpha(P)=\divdiff_{w_o}(AP)(0).
\end{equation}
The coefficient $b_w$ in the expansion $\alpha=\sum_wb_w\sigma_w$ is given by
 \begin{equation}
 \label{eq:coefficient_extraction}
 b_w=\phi_\alpha(\schub_{w_ow})=\divdiff_{w_o}(\schub_{w_ow}A)(0).
 \end{equation}
  Indeed $j_n(\schub_{w_ow})=\sigma_{w_ow}$ by Theorem~\ref{thm:borel_LS}, and we use the duality of Schubert classes $\int\sigma_u\cup\sigma_v=0$ unless $v=w_ou$ where it is $1$.\smallskip

 The \emph{degree polynomial} of $\alpha$ is defined by \[\phi_\alpha((\lambda_1x_1+\cdots+\lambda_nx_n)^p),\] see \cite{HarHor18,PS09}. It is a polynomial in $\lambda=(\lambda_1,\ldots,\lambda_n)$, where coefficients are given by applying $\phi_\alpha$ to a monomial. When $\alpha=[Y]$ for a subvariety $Y$, and $\lambda\in \bQ^n$ is a strictly dominant weight $\lambda_1>\cdots>\lambda_n\geq 0$, $\phi_\alpha((\lambda_1x_1+\cdots+\lambda_nx_n)^p)$
 gives the degree of $Y$ in its embedding in $\mathbb{P}(V_\lambda)$  where $V_{\lambda}$ denotes the irreducible representation of $GL_n$ with highest weight $\lambda$.

The degree polynomials $\cD_w(\lambda_1,\ldots,\lambda_n)$ of Schubert classes $\sigma_w$ are studied in \cite{PS09}. Note that if $\alpha=\sum_{w}b_w\sigma_w$ as above, then by linearity the degree polynomial of $\alpha$ is $\sum_w b_w \cD_w(\lambda_1,\ldots,\lambda_n)$.

\subsection{Pipe dreams}

The \emph{BJS formula} of Billey, Jockusch and Stanley \cite{Bil93} is an explicit nonnegative expansion of $\schub_w$  in the monomial basis:
\begin{equation}
\label{eq:bjs}
\schub_w(x_1,\ldots,x_n)=\sum_{\mathrm{i}\in \reduced(w)} \sum_{b\in C(\mathrm{i})} \alpx^b,
\end{equation}
where $C(\mathrm{i})$ is the set of compositions $b_1\leq \ldots\leq b_{l}$ such that $1\leq b_j\leq i_j$, and $b_j<b_{j+1}$ whenever $i_j<i_{j+1}$. Additionally, $\alpx^b$ is the monomial $x_1^{b_1}\cdots x_{l}^{b_l}$. \smallskip

The expansion in \eqref{eq:bjs} has a nice combinatorial version with {\em pipe dreams} (also known as rc-graphs), which we now describe. Let $\mathbb Z_{> 0}\times \mathbb Z_{> 0}$ be the semi-infinite grid, starting from the northwest corner.  Let $(i,j)$ indicate the position at the $i$th row from the top and the $j$th column from the left.  A {\em pipe dream} is a tiling of this grid with $+$'s (pluses) and \includegraphics[scale=.6]{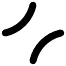}'s (elbows) with a finite number of $+$'s. The \emph{size} $|\gamma|$ of a pipe dream $\gamma$ is the number of $+$'s.\smallskip

Any pipe dream can be viewed as composed of \emph{strands}, which cross at the $+$'s.
Strands naturally connect bijectively rows on  the left edge of the grid and columns along the top; let $w_{\gamma}(i)=j$ if the $i$th row is connected to the $j$th column, which defines a permutation $w_\gamma\in\sgrp_\infty$.\smallskip

Say that $\gamma$ is {\em reduced} if $|\gamma|=\ell(w_\gamma)$; equivalently, any two strands of $\gamma$ cross at most once. We let $\pipe(w)$ be the number of reduced pipe dreams $\gamma$ such that $w_\gamma=w$.
Notice that if $w\in \sgrp_n$ then the $+$'s in any $\gamma\in \pipe(w)$ can only occur in positions $(i,j)$ with $i+j<n$, so we can restrict the grid to such positions.

\begin{figure}[!ht]
\includegraphics[scale=0.6]{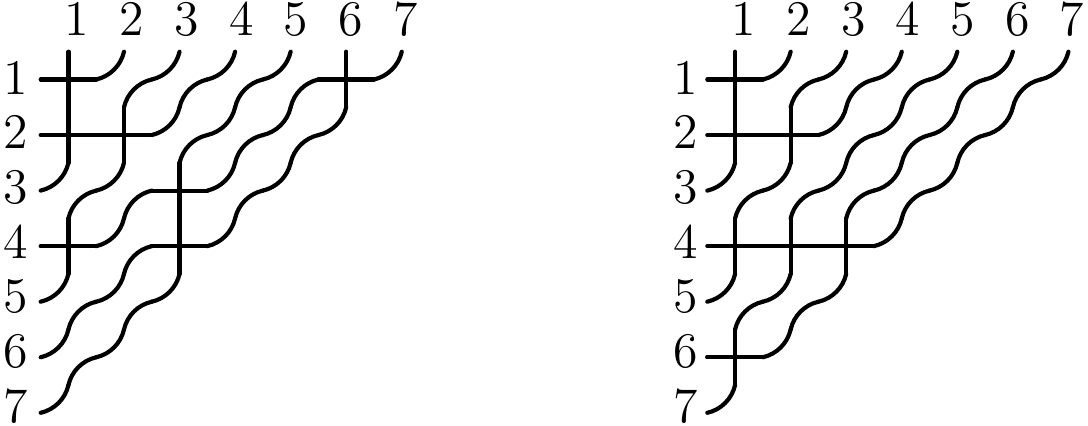}
\caption{\label{fig:pipe_dreams}  Two reduced pipe dreams with permutation $w_\gamma=2417365$. On the right is the bottom pipe dream attached to this permutation.}
 \end{figure}

Given $\gamma\in \pipe(w)$, define $c(\gamma)\coloneqq (c_1,c_2,\ldots)$ where $c_i$ is the number of $+$'s on the $i$th row of $\gamma$. Then the BJS expansion~\eqref{eq:bjs} can be rewritten as follows \cite{Bil93,Man01}:
 \begin{equation}
\label{eq:pipe_dreams}
{\schub_{w}}=\sum_{\gamma\in \pipe(w)} \alpx^{c(\gamma)}.
\end{equation}
 \begin{wrapfigure}{r}{0.3\textwidth}
\begin{center}
\includegraphics[scale=0.5]{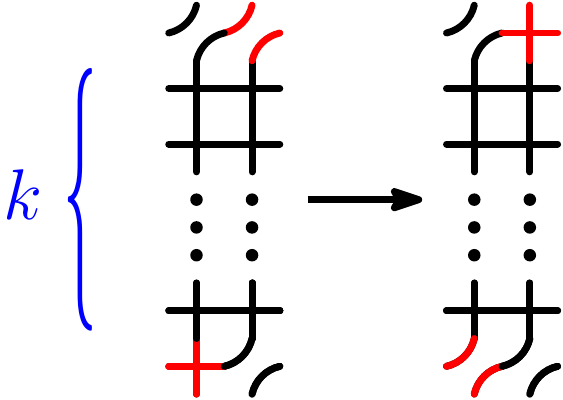}
\end{center}
\end{wrapfigure}
Given $w\in \sgrp_\infty$, let $(c_1,c_2,\ldots)=\code{w}$. The {\em bottom pipe dream} $\gamma_w\in \pipe(w)$ consists of $+'s$ in columns $1,\ldots,c_i$ for each row $i=1,\ldots,n$; note that $c(\gamma_w)=\code{w}$.

A {\em ladder move} is a local operation on pipe dreams illustrated on the right: here $k$ can be any nonnegative integer. When $k=0$ this is called a {\em simple} ladder move.
The following result shows how to easily generate all pipe dreams attached to a given permutation.

\begin{theorem}(\cite[Theorem 3.7]{Ber93})
\label{thm:ladder_moves}
Let $w\in \sgrp_n$. If $\gamma\in \pipe(w)$, then $\gamma$ can be obtained by a sequence of ladder moves from $ \gamma_w$.
\end{theorem}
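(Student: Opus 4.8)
The plan is to prove the theorem by showing that the inverse operation to a ladder move always allows us to travel \emph{from} an arbitrary $\gamma\in\pipe(w)$ \emph{to} the bottom pipe dream $\gamma_w$; reversing such a sequence then exhibits $\gamma$ as obtained from $\gamma_w$ by forward ladder moves, which is what the statement asks. The first step is routine bookkeeping: one checks that a ladder move, being supported in a $2\times(k+2)$ window and leaving every cell outside the window untouched, reroutes the two strands entering the window to the same two exits and creates no second crossing of any pair. Hence it preserves both $w_\gamma$ and the reducedness condition $|\gamma|=\ell(w_\gamma)$, so it maps $\pipe(w)$ to $\pipe(w)$; moreover a ladder move and its inverse are mutually inverse operations. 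Thus the theorem reduces to the single claim that \emph{every} $\gamma\in\pipe(w)$ can be brought to $\gamma_w$ by a sequence of inverse ladder moves.

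For termination I would introduce the monovariant
\[
N(\gamma)=\sum_{(i,j)\colon +\,\mathrm{at}\,(i,j)} j,
\]
the sum of the column indices of all pluses. A forward ladder move shifts a single plus from column $j$ to column $j+1$ and fixes all others, so it increases $N$ by exactly $1$; consequently an inverse ladder move decreases $N$ by exactly $1$. Since $N$ is a positive integer, any chain of inverse ladder moves must terminate. It therefore suffices to identify the configurations admitting \emph{no} inverse ladder move and to show that $\gamma_w$ is the only one. Here I would first record the characterization that a reduced pipe dream in which every row is left-justified (the pluses of row $i$ occupy an initial segment of columns, say $1,\dots,d_i$) necessarily has $\code{w_\gamma}=(d_1,d_2,\dots)$; since $w\mapsto\gamma_w$ is the inverse of this assignment, such a $\gamma$ must equal $\gamma_w$. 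Equivalently, if $\gamma\neq\gamma_w$ then $\gamma$ possesses a \emph{left-exposed} plus, namely a plus at some $(a,b)$ with $b\ge 2$ and an elbow at $(a,b-1)$.

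The crux is then to show that a left-exposed plus can be chosen so that an inverse ladder move actually applies at it. I would select $(a,b)$ to be a bottom-most left-exposed plus, and rightmost among those; by minimality every row below $a$ is left-justified. Reading down column $b$ from $(a,b)$, left-justification of the lower rows propagates a plus in column $b-1$ wherever there is a plus in column $b$, until the first row $r>a$ with an elbow at $(r,b)$. If $(r,b-1)$ is also an elbow, then rows $a{+}1,\dots,r{-}1$ carry pluses in both columns $b-1,b$, row $r$ carries elbows in both, and $(a,b-1)$ is an elbow: this is exactly the window of a generalized inverse ladder move with $k=r-1-a$, sliding the plus from $(a,b)$ down to $(r,b-1)$ and lowering $N$. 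The remaining misaligned case, where $(r,b-1)$ is instead a plus, is the main obstacle: the naive window fails, and one must either re-select the movable plus or invoke reducedness (no two strands cross twice) to rule the configuration out for the extremal choice. I expect this alignment/misalignment analysis, together with verifying that the relocated plus preserves $w_\gamma$, to be the delicate part of the argument; once it is in place, termination of the strictly decreasing $N$ together with the fact that the unique sink is $\gamma_w$ yields a sequence of inverse ladder moves from any $\gamma$ to $\gamma_w$, and reversing it completes the proof.
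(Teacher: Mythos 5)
The paper does not actually prove this statement; it imports it from Bergeron--Billey \cite{Ber93}, so the only question is whether your argument stands on its own. Your skeleton is sound and is essentially the standard route: ladder moves preserve $\pipe(w)$ because they reroute the two strands of the $2\times(k+2)$ window to the same boundary edges; the column-sum $N(\gamma)$ strictly decreases under inverse ladder moves, forcing termination; a left-justified reduced pipe dream must equal $\gamma_{w_\gamma}$, so any $\gamma\neq\gamma_w$ has a left-exposed plus. All of that is fine.

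The genuine gap is exactly where you flag it: you never dispose of the ``misaligned'' case in which $(r,b-1)$ is a plus, and you explicitly leave it as something you ``expect'' to work out. Without it, you have not shown that every non-bottom reduced pipe dream admits an inverse ladder move, and the whole induction fails to close. The case can in fact be ruled out by reducedness, and the argument is short enough that you should include it. In that configuration, the strand entering the elbow at $(a,b-1)$ from the south runs vertically up column $b-1$ through the pluses in rows $r,r-1,\dots,a+1$ (these are all pluses: rows $a+1,\dots,r-1$ are left-justified and carry a plus in column $b$, hence also in column $b-1$, and $(r,b-1)$ is a plus by assumption), turns east at $(a,b-1)$, and crosses $(a,b)$ horizontally. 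The strand entering $(a,b)$ from the south runs down column $b$ through the pluses in rows $a+1,\dots,r-1$ to the elbow at $(r,b)$, where it must have entered from the west, hence it traverses the plus at $(r,b-1)$ horizontally. The pair of strands meeting at $(a,b)$ therefore also meets at $(r,b-1)$, i.e.\ two strands cross twice, contradicting the characterization of reducedness recalled in the paper. With this inserted, your extremal left-exposed plus always sits at the top-right of a valid inverse-ladder window (elbows at $(a,b-1)$, $(r,b-1)$, $(r,b)$, pluses in both columns in between), and the monovariant argument completes the proof.
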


\begin{definition}
For any $w\in\sgrp_\infty$, define the \emph{principal specialization} $\nu_w$ of the Schubert polynomials $\schub_w$ by  $\nu_w=\schub_w(1,1,\ldots)$.
\end{definition}

By the expansion \eqref{eq:pipe_dreams}, one has the combinatorial interpretation
\begin{equation}
\label{eq:nu_equals_PD}
\nu_w=|\pipe(w)|.
\end{equation}
 An alternative expression for $\nu_w$ is given by \emph{Macdonald's reduced word identity} \cite{Macdonald}
\begin{equation}
\label{eq:macdonald}
\nu_{w}=\frac{1}{\ell(w)!}\sum_{\mathbf{i}\in \reduced(w)}i_1i_2\cdots i_{\ell(w)}.
\end{equation}

A deeper study of Macdonald's reduced word identity and its generalizations has seen renewed interest recently and has brought forth various interesting aspects of the interplay between Schubert polynomials, combinatorics of reduced words, and differential operators on polynomials.
We refer the reader to \cite{Bil19,Ham20,Wei18,MPPa19} for more details.
As we shall see in the next section, an expression rather reminiscent of the right hand side of \eqref{eq:macdonald} plays a key role in our quest to obtain the Schubert expansion for  $\tau_n=[\Perm_n]$, and its appearance in this context begs for deeper explanation.

\section[Formulas]{Formulas for \texorpdfstring{$a_w$}{}}
\label{sec:formulas}

Recall that we want to investigate the numbers $a_w$ occurring in the Schubert class expansion
\begin{equation*}
 \class_n=\sum_{w\in \sgrpp_n}a_w\sigma_{w_ow}\in H^{*}(\flag{n}).
\end{equation*}

Now $\tau_n$ is the class of the variety $\Perm_n$, so by the classical results recalled in Section~\ref{sub:cohomology}, we know that the $a_w$ are nonnegative integers: namely $a_w$ is the number of points in the intersection of $\Perm_n$ with a Schubert variety $X_{w_ow}(V_\bullet)$ where $V_\bullet$ is a \emph{generic} flag.

In this section we use two approaches \textemdash{} the first due to Klyachko \cite{Kly85,Kly95}, the second due to Anderson-Tymoczko~\cite{And10}\textemdash{} to arrive at algebraic expressions for the numbers $a_w$. These are given in Theorems \ref{thm:klyachko} and \ref{thm:anderson_tymoczko} respectively, and both expressions will be exploited to extract various properties of the numbers $a_w$.

\subsection{\texorpdfstring{$a_w$}{} via Klyachko's approach}
\label{sub:Klyachko}

We will extract our first expression from the results of~\cite{Kly85,Kly95}. 
Note that \cite{Kly85} is a two page summary of results (in English), while \cite{Kly95} proves these results and expands on them, and is written in Russian. We describe the two theorems of significance for us in Section~\ref{sec:klyachko_formula}, giving a slightly simplified proof for the second one.

Given $w\in \sgrp_\infty$ of length $\ell=\ell(w)$, consider the polynomial in $\bQ[x_1,x_2,\ldots]$:
\begin{align}
\label{eq:M}
\M_w(x_1,x_2,\ldots)&\coloneqq\sum_{\mathbf{i}={i_1}{i_2}\cdots {i_{\ell}}\in \reduced(w)}x_{i_1}x_{i_2}\cdots x_{i_{\ell}}=\sum_{\mathbf{i}\in \reduced(w)}\alpx^{c(\mathbf{i})},
\end{align}
where $c(\mathbf{i})=(c_1,c_2,\ldots)$ and $c_j$ is the number of occurrences of $j$ in $\mathbf{i}$. If $w\in \sgrp_n$, then $\M_w$ is a polynomial in $x_1,\ldots,x_{n-1}$.
Notice that Macdonald's formula \eqref{eq:macdonald} states that \[M_w(1,2,\ldots)=\ell!\cdot \nu_w.\]

For $n\geq 3$, let $\D_n$ be the commutative $\bQ$-algebra with generators $u_1,\ldots,u_{n-1}$ and defining relations
\[\begin{cases}
2u_i^2=u_iu_{i-1}+u_iu_{i+1}\text{ for }1<i<n-1;\\
2u_1^2=u_1u_{2};\\
2u_{n-1}^2=u_{n-1}u_{n-2}.
\end{cases}\]Given $I=\{i_1<\cdots<i_{j}\}\subset [n-1]$, define $u_I\coloneqq u_{i_1}\cdots u_{i_j}$.
Then the elements $u_I$, $I\subset [n-1]$ form a basis of $\D_n$. Given $U=\sum_Ic_Iu_I\in \D_n$, let $\int_{\D_n} U$ be the top coefficient $c_{[n-1]}$.

\begin{theorem}
\label{thm:klyachko}
For any $w\in\sgrpp_n$, we have
\begin{align*}
	a_w=\int_{\D_n} M_w(u_1,u_2,\ldots,u_{n-1}).
\end{align*}\end{theorem}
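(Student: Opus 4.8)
The plan is to realize $a_w$ as an intersection number on the permutahedral variety itself and then transport the computation into $\D_n$. Write $\kappa\colon\Perm_n\hookrightarrow\flag{n}$ for the inclusion, so that $\class_n=\kappa_*(1)$. Since $\class_n=\sum_{v\in\sgrpp_n}a_v\sigma_{w_o v}$ and Schubert classes are Poincar\'e dual ($\int\sigma_{w_o v}\cup\sigma_w=\delta_{v,w}$ for $v,w\in\sgrpp_n$), we get $a_w=\int_{\flag{n}}\class_n\cup\sigma_w$. The projection formula then gives
\[
a_w=\int_{\flag{n}}\kappa_*(1)\cup\sigma_w=\int_{\Perm_n}\kappa^*\sigma_w,
\]
the degrees matching because $\dim_{\bC}\Perm_n=n-1=\ell(w)$. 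Everything thus reduces to understanding the restriction $\kappa^*\sigma_w$ and the degree map $\int_{\Perm_n}$ on the relevant part of $H^*(\Perm_n)$.

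\textbf{The algebra and the reduced-word identity.} Next I would set $u_i\coloneqq\kappa^*\sigma_{s_i}$ and $t_i\coloneqq\kappa^*j_n(x_i)$, so that $u_i=t_1+\cdots+t_i$ (with $u_0=u_n=0$) and $\kappa^*\sigma_w=\schub_w(t_1,\ldots,t_n)$ because $\kappa^*j_n$ is the ring map $x_i\mapsto t_i$. On the toric variety $\Perm_n$ (whose fan is the type $A$ Coxeter fan) the classes $u_i$ satisfy $u_i(t_i-t_{i+1})=0$, which is exactly the defining relation $2u_i^2=u_iu_{i-1}+u_iu_{i+1}$ of $\D_n$; hence $u_i\mapsto u_i$ gives a surjection of $\D_n$ onto the subring $R\subseteq H^*(\Perm_n)$ generated by the $u_i$, and comparing it with the square-free basis $\{u_I\}$ shows it is an isomorphism (this is Klyachko's structural theorem). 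The heart of the matter is then Klyachko's reduced-word identity, the cohomological lift of Macdonald's formula~\eqref{eq:macdonald},
\[
\kappa^*\sigma_w=\frac{1}{\ell(w)!}\sum_{\mathbf{i}\in\reduced(w)}u_{i_1}\cdots u_{i_{\ell(w)}}=\frac{1}{\ell(w)!}M_w(u_1,\ldots,u_{n-1}),
\]
equivalently $M_w(u)=\ell(w)!\,\schub_w(t)$ in $\D_n$.

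\textbf{Proof of the identity (the main obstacle).} I would prove this by induction on $\ell(w)$. Grouping reduced words by their last letter gives, for free, the recursion $M_w(u)=\sum_i u_iM_{ws_i}(u)$ over right descents $i$ of $w$; so the induction closes once one establishes the matching relation
\[
\ell(w)\,\schub_w(t)=\sum_{i\,:\,\ell(ws_i)<\ell(w)}u_i\,\schub_{ws_i}(t)\quad\text{in }\D_n.
\]
This is where I expect all the work to sit: the relation is \emph{false} in $H^*(\flag{n})$ and holds only after imposing $u_it_i=u_it_{i+1}$, so one must show how the divided-difference (Monk) structure of Schubert polynomials collapses under the $\D_n$-relations. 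I expect this to be cleanest via an Euler/homogeneity argument combined with the relations, or by packaging all $w$ at once into the Fomin--Stanley nilCoxeter generating function and identifying it with $(1-\sum_i u_ie_i)^{-1}=\sum_w M_w(u)\,w$; this is the step where Klyachko's argument, which we streamline, does the real work.

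\textbf{Comparing the degree maps and concluding.} A single intersection computation on $\Perm_n$ gives $\int_{\Perm_n}u_1\cdots u_{n-1}=(n-1)!$ (the top mixed Eulerian number $A_{1,\ldots,1}$), and since the top-degree part of $\D_n$ is spanned by $u_{[n-1]}$, the form $\int_{\Perm_n}$ on $R$ equals $(n-1)!\int_{\D_n}$. Combining this with the reduced-word identity yields
\[
a_w=\int_{\Perm_n}\kappa^*\sigma_w=\frac{1}{(n-1)!}\int_{\Perm_n}M_w(u)=\int_{\D_n}M_w(u_1,\ldots,u_{n-1}),
\]
as claimed. The delicate points are thus the reduced-word identity of the third paragraph and pinning down the normalization $\int_{\Perm_n}u_{[n-1]}=(n-1)!$; the remaining steps are formal.
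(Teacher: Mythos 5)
Your overall architecture coincides with the paper's: the paper also identifies $a_w=\int\sigma_w\cup[\Perm_n]$, pulls the computation back along $\Perm_n\hookrightarrow\flag{n}$ to the invariant subring $H^*(\Perm_n)^{S_n}\cong\D_n$, uses the fact that the fundamental class of $\Perm_n$ is $u_{[n-1]}/(n-1)!$ (your normalization $\int_{\Perm_n}u_1\cdots u_{n-1}=(n-1)!$), and invokes the reduced-word identity $i^*\sigma_w=M_w(u)/\ell(w)!$. The proof of Theorem~\ref{thm:klyachko} in the paper is deliberately short --- it cites Klyachko for the presentation of the invariant ring and for the reduced-word identity --- and the actual proof of that identity is carried out in Section~\ref{sec:klyachko_formula} (Theorem~\ref{th:KM}), in arbitrary Lie type.

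The genuine gap in your write-up is exactly the step you flag: the reduced-word identity is asserted but not proved, and the two strategies you sketch are not carried out. Moreover, the ``going down'' relation you propose to verify, $\ell(w)\,\schub_w(t)=\sum_{i\in\des(w)}u_i\,\schub_{ws_i}(t)$ in $\D_n$, is not something that follows ``for free'' from homogeneity: expanding $\sum_i u_i\schub_{ws_i}$ by Monk's rule produces $|\des(w)|\,\sigma_w$ plus a large collection of cross terms $\sigma_{ws_it_{ab}}$, and showing that these cross terms reorganize (after imposing $u_i\alpha_i=0$) to yield the coefficient $\ell(w)$ requires precisely the combinatorial bookkeeping that the paper isolates in Lemma~\ref{lem:easy?} (a BGG-type statement comparing the sets $A_w$ and $B_w$ of pairs of reflections). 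The paper runs the induction in the opposite direction --- it shows $[\Lambda]\,P(w)=\frac{1}{\ell(w)+1}\sum_\beta\langle\check\beta,\Lambda\rangle P(ws_\beta)$ over length-increasing reflections and matches this against the Chevalley formula --- which lets the relations $u_i[\alpha_i]=0$ enter cleanly via $[\Lambda]u_i=[s_i\Lambda]u_i$. So your skeleton is sound and identical in substance to the paper's, but until you supply a proof of the restriction formula (or an argument replacing Lemma~\ref{lem:easy?}), the proposal does not constitute a complete proof.
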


\begin{proof}
	This is a light reformulation of Klyachko's work~\cite{Kly85,Kly95}, specialized to type A. 
	The rational cohomology ring of $\Perm_n$ is computed in this work. $\sgrp_n$ acts on this ring, and the corresponding subring of invariants is shown to be isomorphic to the algebra $\D_n$ above. In this presentation, the fundamental class of $\Perm_n$ is represented by $u_{[n-1]}/(n-1)!$.

	Now the embedding $\Perm_n\to \flag{n}$ gives a pullback morphism $H^*(\flag{n})\to \D_n$, under which the image of the Schubert class $\sigma_w$ is $M_w(u_1,u_2,\ldots,u_{n-1})/\ell(w)!$.	Let $w\in\sgrpp_n$.
	We have $a_w=\int \sigma_w\cup\class_n= \int \sigma_w\cup [\Perm_n]$. 	By pulling back the computation to $\D_n$, we get the result.
\end{proof}

\subsection{\texorpdfstring{$a_w$}{} via Anderson--Tymoczko's approach}
\label{sub:AT}

We have already encountered the operator of divided symmetrization $\ds{\cdot}{n}$ in the introduction.

\begin{theorem}
\label{thm:anderson_tymoczko}
For any $w\in\sgrpp_n$,
\begin{equation}
\label{eq:aw_as_ds}
a_w=\ds{\schub_w(x_1,\ldots,x_n)}{n}.
\end{equation}
\end{theorem}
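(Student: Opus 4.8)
The goal is to prove that $a_w = \ds{\schub_w}{n}$ for $w \in \sgrpp_n$, and the natural vehicle is the Anderson--Tymoczko expansion of the Hessenberg class combined with the degree-polynomial machinery set up in Subsection~\ref{sub:expansion_degree}. The plan is to recognize both sides as the value of the \emph{same} linear form applied to $\schub_w$.

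\medskip

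\noindent\textbf{The approach.} Recall from \eqref{eq:coefficient_extraction} that if $\class_n = \sum_w a_w \sigma_{w_ow}$ then $a_w = \phi_{\class_n}(\schub_w)$, where $\phi_{\class_n}$ is the linear form on $\bQ^{(n-1)}[\alpx_n]$ attached to the class $\class_n$. Here I use that the coefficient of $\sigma_{w_o w}$ is extracted by $\phi_{\class_n}(\schub_{w})$ rather than $\schub_{w_o w}$, matching the indexing convention of \eqref{eq:peterson_class_decomposition}. Thus the theorem reduces to the single identity
\begin{equation*}
\phi_{\class_n}(P) = \ds{P}{n} \qquad \text{for all } P \in \bQ^{(n-1)}[\alpx_n].
\end{equation*}
First I would invoke the Anderson--Tymoczko formula (referenced in the introduction, and to be recalled in Subsection~\ref{sub:AT}) to obtain an explicit polynomial representative $A \in \bQ[\alpx_n]$ with $j_n(A) = \class_n$; by \eqref{eq:phi_alpha} we then have $\phi_{\class_n}(P) = \divdiff_{w_o}(AP)(0)$. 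The representative coming from the Anderson--Tymoczko description for the Hessenberg function $h=(2,3,\dots,n,n)$ should be (up to the relevant normalization) the product $\prod_{i=1}^{n-1}(x_1 + \cdots + x_i)$, reflecting the length-additive factorization attached to $h$.

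\medskip

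\noindent\textbf{Key steps.} Second, I would compare this with the divided-symmetrization form \eqref{eq:definition_ds}. The crucial observation is that both $\phi_{\class_n}$ and $\ds{\cdot}{n}$ are linear forms on the finite-dimensional space $\bQ^{(n-1)}[\alpx_n]$, so it suffices to show they agree on a spanning set. The cleanest route is to show that $\ds{\cdot}{n}$ is itself of the form $P \mapsto \divdiff_{w_o}(AP)(0)$ for the \emph{same} $A$. This is essentially a rewriting of the definition: $\divdiff_{w_o}$ can be expanded as the alternating sum $\sum_{w \in \sgrp_n} (-1)^{\ell(w)} w \cdot (\,\cdot\,)/\prod_{i<j}(x_i - x_j)$, and one reorganizes the antisymmetrizer to isolate the consecutive-difference denominator $\prod_{i=1}^{n-1}(x_i - x_{i+1})$ appearing in \eqref{eq:definition_ds}, with the complementary factor of the Vandermonde being absorbed into $A$. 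Concretely, I expect the identity $\divdiff_{w_o}(AP) = \ds{P}{n}$ to fall out once $A$ is identified as the ratio of the full Vandermonde denominator of $\divdiff_{w_o}$ to the consecutive-difference denominator, which is exactly a product of the form $\prod(x_1 + \cdots + x_i)$ after accounting for the $\sgrp_n$-symmetrization. Taking the constant term (evaluation at $0$) then matches the scalar output of $\ds{\cdot}{n}$ on homogeneous degree-$(n-1)$ input.

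\medskip

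\noindent\textbf{Main obstacle.} The delicate point is pinning down the precise polynomial representative $A$ and verifying that the bookkeeping between the full antisymmetrization operator $\divdiff_{w_o}$ and the single-chain denominator in \eqref{eq:definition_ds} produces exactly $A$ with no spurious constant. In particular one must check that the degree-$(n-1)$ homogeneity is respected so that $\divdiff_{w_o}(AP)$ is a \emph{constant} (its degree being $\deg A + (n-1) - \binom{n}{2}$, which forces $\deg A = \binom{n}{2} - (n-1) = \binom{n-1}{2}$, consistent with $A = \prod_{i=1}^{n-1}(x_1+\cdots+x_i)$ of degree $\binom{n}{2} - (n-1)$ --- this degree check is itself a useful sanity constraint to carry out carefully). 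I would also need to confirm that the Anderson--Tymoczko representative genuinely lands in the class $\class_n$ modulo $I_n$ rather than merely up to lower-order corrections; since $j_n$ kills $I_n$, any representative differing by an element of $I_n$ gives the same $\phi_{\class_n}$, so this flexibility works in my favor. Granting the identification of $A$, the equality $\phi_{\class_n} = \ds{\cdot}{n}$ is then a formal consequence, and evaluating at $\schub_w$ yields the claimed formula.
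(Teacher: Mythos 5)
Your overall architecture is the paper's: reduce to the single identity $\phi_{\class_n}(P)=\ds{P}{n}$ on $\bQ^{(n-1)}[\alpx_n]$, represent $\phi_{\class_n}(P)$ as $\divdiff_{w_o}(AP)(0)$ for a polynomial representative $A$ of $\class_n$, and then rewrite $\divdiff_{w_o}=\frac{1}{\Delta_n}\anti_n$ (with $\Delta_n=\prod_{i<j}(x_i-x_j)$) so that the complementary Vandermonde factor cancels and one is left with $\sym_n\bigl(P/\prod_{i=1}^{n-1}(x_i-x_{i+1})\bigr)=\ds{P}{n}$. That is exactly Proposition~\ref{prop:phi_tau_n} and its proof, and your reduction $a_w=\phi_{\class_n}(\schub_w)$ via \eqref{eq:coefficient_extraction} is handled correctly.

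There is, however, a concrete error in your identification of the representative $A$. The Anderson--Tymoczko representative of $\class_n$ for $h=(2,3,\ldots,n,n)$ is the specialized double Schubert polynomial of the dominant permutation $w_h$, namely $\prod_{1\leq i<j\leq n,\, j>i+1}(x_i-x_j)$, homogeneous of degree $\binom{n}{2}-(n-1)=\binom{n-1}{2}$ --- and this degree is forced, since $\class_n$ lives in cohomological degree $(n-1)(n-2)$. It is \emph{not} $\prod_{i=1}^{n-1}(x_1+\cdots+x_i)$: that product has degree $n-1$, not $\binom{n-1}{2}$ (the two agree only when $n=4$), so your own degree sanity check, carried out correctly, rules it out; moreover $j_n\bigl(\prod_{i=1}^{n-1}(x_1+\cdots+x_i)\bigr)=\sigma_{s_1}\cdots\sigma_{s_{n-1}}$, a class of the wrong cohomological degree entirely. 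The correct $A$ is already implicit in your phrase ``the ratio of the full Vandermonde denominator to the consecutive-difference denominator'': indeed $\Delta_n/\prod_{i=1}^{n-1}(x_i-x_{i+1})=\prod_{j>i+1}(x_i-x_j)$, and with that $A$ the computation closes, since $\frac{1}{\Delta_n}\anti_n(PA)=\frac{1}{\Delta_n}\anti_n\bigl(P\Delta_n/\prod_{i}(x_i-x_{i+1})\bigr)=\sym_n\bigl(P/\prod_{i}(x_i-x_{i+1})\bigr)$ using $\sigma(\Delta_n)=\epsilon(\sigma)\Delta_n$. So the proof is repaired by deleting the $\prod(x_1+\cdots+x_i)$ identification (and the appeal to a ``length-additive factorization'') and keeping only the Vandermonde-ratio description of $A$.
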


We recall some relevant results from \cite{And10}. A \emph{Hessenberg function} $h:[n]\to [n]$ is a function satisfying the condition that $i\leq h(i)$ for all $i\in [n]$ and $h(i)\leq h(j)$ for all $1\leq i<j\leq n$.
Given an $n \times n$ matrix $X$ and a Hessenberg function $h :[n] \to [n]$, the \emph{Hessenberg
variety} (in type A) associated with $X$ and $h$ is defined to be
 \[
\Hess(X, h) \coloneqq \{{V_{\bullet} \in \flag{n}\suchthat X\cdot V_j \subset V_{h(j)} \text{ for all } j \in [n]}\}.
\]

We consider $\Hess(X,h)$ for $X$ a \emph{regular} matrix: this means that $X$ has exactly one Jordan block attached to each eigenvalue. Since regular Hessenberg varieties form a flat family~\cite{AFZ20_bis} the class $\Sigma_h=[\Hess(X,h)]\in H^*(\flag{n})$ does not depend on $X$. \smallskip

By relating $\Hess(X,h)$ to a degeneracy locus when $X$ is regular semisimple, Anderson and Tymoczko~\cite{And10} express $\Sigma_h$ as a certain specialization of a \emph{double Schubert polynomial} \cite{Man01}. 
We identify $H^*(\flag{n})$ and $R_n=\bQ[\alpx_n]/I_n$ thanks to Theorem \ref{thm:borel_LS}. 
The main result of \cite{And10} is
\begin{align}
\label{eq:regular_class_doubleschubert}
\Sigma_h&=\schub_{w_h}(x_1,\cdots,x_n;x_n,\cdots,x_1)\mod I_n
\\&=\prod_{\stackrel{1\leq i<j\leq n}{j>h(i)}}(x_i-x_j)\mod I_n.\label{eq:regular_class_product}
\end{align}
where $w_h$ is the permutation given by $\code{w_h^{-1}}=(n-h(1),\ldots,n-h(n))$. The simple product form in \eqref{eq:regular_class_product} comes from the fact that $w_h$ is a dominant permutation, cf.~\cite[Proposition 2.6.7]{Man01}.
\medskip

Now in the case of $h=(2,3,\ldots,n,n)$, we have that $\Sigma_h=\tau_n$ by definition and thus
 \[\class_n=\prod_{\stackrel{1\leq i<j\leq n}{j>i+1}}(x_i-x_j)\mod I_n.\]
 Following the terminology of Section \ref{sub:expansion_degree}, consider the linear form $\phi_{\class_n}$ defined on $\bQ^{(n-1)}[\alpx_n]$ by
\[\phi_{\class_n}(P)=\divdiff_{w_o}(P\prod_{\stackrel{1\leq i<j\leq n}{j>i+1}}(x_i-x_j))\]

We know that $\phi_{\class_n}(\schub_w)=a_w$ by \eqref{eq:coefficient_extraction}, so that Theorem~\ref{thm:anderson_tymoczko} follows immediately from the next proposition.

\begin{proposition}
\label{prop:phi_tau_n}
For any  $P\in \bQ^{(n-1)}[\alpx_n]$,\[\phi_{\class_n}(P)=\ds{P}{n}.\]
\end{proposition}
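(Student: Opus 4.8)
The plan is to reduce both sides to one and the same alternating sum over $\sgrp_n$, by invoking the classical expression of the longest divided difference operator as a normalized antisymmetrizer. Write $\Delta\coloneqq\prod_{1\le i<j\le n}(x_i-x_j)$ for the Vandermonde, set $Q\coloneqq\prod_{1\le i<j\le n,\,j>i+1}(x_i-x_j)$ for the product appearing in $\phi_{\class_n}$, and $R\coloneqq\prod_{1\le i\le n-1}(x_i-x_{i+1})$ for the denominator appearing in divided symmetrization. The key structural observation is the factorization $\Delta=QR$: the factors $(x_i-x_j)$ with $j=i+1$ are precisely those omitted from $Q$, so multiplying them back in recovers the full Vandermonde.

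First I would recall the standard identity (see e.g. \cite{Man01})
\[\divdiff_{w_o}(f)=\frac{1}{\Delta}\sum_{w\in\sgrp_n}(-1)^{\ell(w)}\,w(f),\]
valid for any polynomial $f$, the right-hand side being a genuine polynomial because the alternating sum is divisible by $\Delta$. Applying this to $f=PQ$ and using multiplicativity of the action, $w(PQ)=w(P)\,w(Q)$, I obtain
\[\phi_{\class_n}(P)=\divdiff_{w_o}(PQ)=\sum_{w\in\sgrp_n}(-1)^{\ell(w)}\,w(P)\,\frac{w(Q)}{\Delta}.\]
At this point I note the degree bookkeeping: since $\deg P=n-1$ and $\deg Q=\binom{n}{2}-(n-1)$, the product $PQ$ has degree $\binom{n}{2}=\ell(w_o)$, so $\divdiff_{w_o}(PQ)$ is a constant and the constant-term operation implicit in \eqref{eq:phi_alpha} is vacuous.

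The final step will be a sign cancellation. From $\Delta=QR$ one has $Q=\Delta/R$, hence $w(Q)=w(\Delta)/w(R)$; and since $w(\Delta)=(-1)^{\ell(w)}\Delta$, this gives $w(Q)/\Delta=(-1)^{\ell(w)}/w(R)$. Substituting, the two factors $(-1)^{\ell(w)}$ cancel, leaving
\[\phi_{\class_n}(P)=\sum_{w\in\sgrp_n}\frac{w(P)}{w(R)}=\sum_{w\in\sgrp_n}w\!\left(\frac{P}{R}\right)=\ds{P}{n},\]
which is exactly the assertion. I expect no serious obstacle: once the antisymmetrizer formula for $\divdiff_{w_o}$ is in hand the argument is a formal manipulation, and the only delicate points are getting the signs right — it is precisely the relation $w(\Delta)=(-1)^{\ell(w)}\Delta$ that makes the two copies of $(-1)^{\ell(w)}$ cancel rather than reinforce — and confirming that $\divdiff_{w_o}(PQ)$ indeed lands in degree zero.
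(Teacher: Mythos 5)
Your proposal is correct and follows essentially the same route as the paper: both rest on the factorization of the Vandermonde as $QR$, the identity $\divdiff_{w_o}=\frac{1}{\Delta}\sum_{w}(-1)^{\ell(w)}w$, and the sign cancellation coming from $w(\Delta)=(-1)^{\ell(w)}\Delta$, which turns the antisymmetrization of $PQ$ into the symmetrization of $P/R$, i.e.\ $\ds{P}{n}$. The only cosmetic difference is that the paper phrases the cancellation via the operator identity $\anti_n(\Delta\, g)=\Delta\,\sym_n(g)$ rather than term by term.
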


\begin{proof}
	Let $\anti_n$ and $\sym_n$ denote the antisymmetrizing operator $\sum_{\sigma\in\sgrp_n} \epsilon(\sigma)\sigma$ and symmetrizing operator $\sum_{\sigma\in\sgrp_n}\sigma$ acting on $\bQ[\alpx_n]$ respectively. Here the action of the symmetric group permutes indeterminates, and $\epsilon(\sigma)$ denotes the \emph{sign} of $\sigma$.
	Let $\Delta_n$ denote the usual Vandermonde determinant given by $\prod_{1\leq i<j\leq n}(x_i-x_j)$.

	One has $\divdiff_{w_o}=\frac{1}{\Delta_n}\anti_n$ \cite[Proposition 2.3.2]{Man01} so that

	\begin{align*}
		\phi_{\class_n}(P)&=\frac{1}{\Delta_n}\anti_n\left(P\prod_{1\leq i<j\leq n,j\neq i+1}(x_i-x_j)\right)=\frac{1}{\Delta_n}\anti_n\left(\frac{P\Delta_n}{\prod_{1\leq i\leq n-1}(x_i-x_{i+1})}\right)\\
		&=\frac{\Delta_n}{\Delta_n}\sym_n\left(\frac{P}{\prod_{1\leq i\leq n-1}(x_i-x_{i+1})}\right)=\ds{P}{n}.
	\end{align*}
	Here we used the fact that $\sigma(\Delta_n)=\epsilon(\sigma)\Delta_n$ between the first and second lines.
\end{proof}

\begin{remark}
\label{rem:volume_and_degree}
	There is an alternative way to prove Proposition~\ref{prop:phi_tau_n} (equivalently, Theorem~\ref{thm:anderson_tymoczko}), which illuminates why the operator of divided symmetrization occurs in our context.

	It is well known that $\Perm_n$ is a smooth toric variety. 
	Therefore its degree in the embedding $\mathbb{P}(V_\lambda)$ for $\lambda$ strictly dominant is given by the (normalized) volume of its associated polytope. This polytope is the permutahedron with vertices given by permutations of $(\lambda_1,\ldots,\lambda_n)$; see next section for more details. The volume was computed by Postnikov \cite[Theorem 3.2]{Pos09} as a polynomial in $(\lambda_1,\ldots,\lambda_n)$; his result is that the degree polynomial of $\class_n=[\Perm_n]$ is $\ds{(\lambda_1x_1+\cdots+\lambda_nx_n)^{n-1}}{n}$. Since this degree polynomial completely characterizes $\phi_{\tau_n}$, this proves Proposition~\ref{prop:phi_tau_n}.
\end{remark}

\section[Mixed Eulerian numbers]{Mixed Eulerian numbers}
\label{sec:mixed_eulerian}
We turn our attention to an intriguing family of positive integers introduced by Postnikov \cite{Pos09}.
These are the \emph{mixed Eulerian numbers} $A_{c_1,\dots,c_n}$ indexed by weak compositions $c\coloneqq (c_1,\dots,c_n)$ where $\sum_{1\leq i\leq n}c_i=n-1$. We denote by $\wcp{n}$ the set of such compositions.
Recall that a \emph{weak composition} $(c_1,\dots,c_n)$ is simply a sequence of nonnegative integers. A \emph{strong composition} $a=(a_1,\dots,a_p)$ is composed of positive integers, and we write $a\vDash N$ if $\sum_{1\leq i\leq p}a_i=N$. If $c=(0^{k-1},n-1,0^{n-k})$ for some $1\leq k\leq n$,  then $A_c$ equals the classical Eulerian number enumerating permutations in $S_{n-1}$ with $k-1$ descents, which explains the name for the $A_c$ in general.

We collect here various aspects of the mixed Eulerian numbers that shall play a key role in what follows, beginning by explaining how they arise in Postnikov's work.

Given $\lambda\coloneqq (\lambda_1\geq \dots\geq \lambda_n)\in \bR^n$, let $P_{\lambda}$ be the \emph{permutahedron} in $\bR^n$ obtained by considering the convex hull of all points in the $\sgrp_n$-orbit of $\lambda$.
 Let $\mathrm{Vol}(P_{\lambda})$ denote the usual $(n-1)$-dimensional volume of the polytope obtained by projecting $P_{\lambda}$ onto the hyperplane defined by the $n$-th coordinate equaling $0$.

\medskip

 By \cite[Theorem 3.1]{Pos09}, we have that
 \begin{align}\label{eqn:volume_ds_formula}
 	(n-1)!\mathrm{Vol}(P_{\lambda})=\ds{(\lambda_1x_1+\cdots + \lambda_nx_n)^{n-1}}{n}
 \end{align}
 Setting $u_i=\lambda_i-\lambda_{i+1}$ for $1\leq i\leq n-1$, and $u_n=\lambda_n$, we have that
 \begin{align}
 	\sum_{1\leq i\leq n}\lambda_ix_i=\sum_{1\leq i\leq n}u_i(x_1+\dots+x_i).
 \end{align}
 For brevity, set $y_i$ equal to $x_1+\dots + x_i$, and for $c=(c_1,\dots,c_n)$ define
 \begin{align}\label{eqn:def_yc}
 y^c\coloneqq \prod_{1\leq i\leq n}y_i^{c_i}.
 \end{align}
 This given, rewrite \eqref{eqn:volume_ds_formula} to obtain
 \begin{align}
 	\mathrm{Vol}(P_{\lambda})=\sum_{c\in \wcp{n}}\ds{y^c}{n}\frac{u_1^{c_1}\dots u_n^{c_n}}{c_1!\cdots c_n!}.
 \end{align}
We define the \emph{mixed Eulerian number} $A_c$ to be $\ds{y^c}{n}$, and note that Postnikov \cite[Section 16]{Pos09} interprets them as certain mixed volumes up to a normalizing factor, see below.

 Observe that $\ds{y^c}{n}$ is equal to $0$ if $c_n>0$ because of the presence of the symmetric factor $(x_1+\dots+x_n)^{c_n}$ \cite[Corollary 3.2]{DS}.
 Hence one may safely restrict one's attention to mixed Eulerian numbers $A_{c_1,\dots,c_n}$ where $c_n=0$.\footnote{The reader comparing our notation to that in \cite{Pos09} should note that Postnikov works under the tacit assumption that $c_n=0$.}
Henceforth, if we index a mixed Eulerian number by an $(n-1)$-tuple summing to $n-1$, we are implicitly assuming that $c_n=0$.

 The key fact about the mixed Eulerian numbers $A_{(c_1,\dots,c_{n-1})}$ pertinent to our purposes is that they are positive integers.
 As explained in \cite[Section 16]{Pos09}, $A_{(c_1,\dots,c_{n-1})}$ equals the mixed volume of the Minkowski sum of hypersimplices $c_1\Delta_{1,n}+\cdots +c_{n-1}\Delta_{n-1,n}$ times $(n-1)!$, which implies positivity.
By performing a careful analysis of the volume polynomial $\mathrm{Vol}(P_{\lambda})$, Postnikov further provides a combinatorial interpretation for the $A_{(c_1,\dots,c_{n-1})}$ in terms of weighted binary trees; see \cite[Theorem 17.7]{Pos09}.
A more straightforward combinatorial interpretation for these numbers was provided by Liu \cite{Liu16}, in terms of certain permutations with a recursive definition. We omit further details and refer the reader to the articles. Instead we move on to describe some beautiful results due to Petrov \cite{Pet18}. Interestingly, Petrov does not mention mixed Eulerian numbers in his statements, which we believe deserve to be more widely known in this context.

We begin by listing some relations satisfied by the mixed Eulerian numbers that characterize them uniquely. The reader should pay particular heed to the third relation below and compare it with the presentation of Klyachko's algebra $\D_n$ from before.

\begin{lemma}[\cite{Pet18}]\label{lem:uniqueness_via_recurrence_petrov}
  For a fixed positive integer $n$, the mixed Eulerian numbers $A_{(c_1,\dots,c_n)}$ are completely determined by the following relations:
  \begin{enumerate}
      \item $A_{(c_1,\dots,c_n)}=0$ if $c_n>0$.
      \item $A_{(1^{n-1},0)}=(n-1)!$.
      \item $2A_{(c_1,\dots,c_n)}= A_{(c_1,\dots,c_{i-1}+1,{c_i}-1,\dots,c_n)}+A_{(c_1,\dots,c_i-1,c_{i+1}+1,\dots,c_n)}$ if $i\leq n-1$ and $c_i\geq 2$.
  \end{enumerate}
  In the last relation, we interpret $c_{0}$ to be $c_n$.
\end{lemma}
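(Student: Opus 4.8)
The plan is to verify that the three stated relations genuinely hold for the mixed Eulerian numbers $A_c=\ds{y^c}{n}$, and then argue that they determine the numbers uniquely by a descent/induction argument on the composition $c$. Relation (1) follows immediately from the observation already recorded in the excerpt: $\ds{y^c}{n}=0$ whenever $c_n>0$, since $y_n=x_1+\cdots+x_n$ is symmetric and divided symmetrization annihilates anything carrying a symmetric factor of positive degree. Relation (2) is a direct evaluation: $A_{(1^{n-1},0)}=\ds{y_1y_2\cdots y_{n-1}}{n}$, and since $y_1y_2\cdots y_{n-1}=x_1(x_1+x_2)\cdots(x_1+\cdots+x_{n-1})$ one checks via \eqref{eq:definition_ds} that this equals $(n-1)!$ (this is the divided symmetrization of the staircase-type product, and should reduce to counting permutations or to a clean Vandermonde cancellation).

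The crux is relation (3), the quadratic recurrence $2A_c=A_{c-e_i+e_{i-1}}+A_{c-e_i+e_{i+1}}$ when $c_i\geq 2$, with the cyclic convention $c_0=c_n$. First I would isolate the algebraic identity underlying it. Writing $y^c=y_i^{c_i}\cdot(\text{rest})$, the claim amounts to the polynomial identity $2y_i^2=y_iy_{i-1}+y_iy_{i+1}$ holding \emph{modulo the kernel of} $\ds{\cdot}{n}$ — which is exactly the defining relation of Klyachko's algebra $\D_n$ that the lemma's remark invites us to compare against. Concretely, $2y_i-y_{i-1}-y_{i+1}=2(x_1+\cdots+x_i)-(x_1+\cdots+x_{i-1})-(x_1+\cdots+x_{i+1})=x_i-x_{i+1}$ for $1<i<n$ (and the boundary cases $i=1$, $i=n-1$ match the boundary relations of $\D_n$ with the convention $y_0=y_n=0$ used for $c_0=c_n$). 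Hence $y_i(2y_i-y_{i-1}-y_{i+1})=y_i(x_i-x_{i+1})$, and the recurrence reduces to showing
\[
\ds{y_i^{c_i-1}(x_i-x_{i+1})\cdot g}{n}=0
\]
for the relevant cofactor $g$ of degree making the total degree $n-1$. The mechanism I expect to use is that the factor $(x_i-x_{i+1})$ is antisymmetric under the transposition $s_i$, while the divided-symmetrization operator in the form derived in Proposition~\ref{prop:phi_tau_n}, namely $\ds{\cdot}{n}=\frac{1}{\Delta_n}\anti_n(\cdot\,)$, pairs an antisymmetric factor against the already antisymmetrized sum to force a cancellation; the key point is that $y_i^{c_i-1}g$ is symmetric in $x_i,x_{i+1}$ (because $g$ involves only the $y_j$ with $j\neq i$, each of which is symmetric in $x_i\leftrightarrow x_{i+1}$ once $j\neq i$), so the product is antisymmetric in $x_i,x_{i+1}$ and its antisymmetrization vanishes.

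I would then close the argument by establishing uniqueness: the three relations suffice to reduce any $A_c$ to a combination reaching the base value $A_{(1^{n-1},0)}$. The descent statistic here is that relation (3) lets one trade a coordinate $c_i\geq 2$ for compositions whose ``spread'' is strictly smaller in a suitable monovariant (for instance, $\sum_i (c_i-1)^2$ or the distance of $c$ from the all-ones vector decreases), and relation (1) disposes of any composition with $c_n>0$; iterating drives every $c$ toward $(1^{n-1},0)$, whose value is pinned by (2). The main obstacle I anticipate is not the reduction bookkeeping but handling the \emph{boundary cases} $i=1$ and $i=n-1$ cleanly — one must check that the cyclic convention $c_0=c_n$ together with $y_0=0$ reproduces exactly the asymmetric boundary relations $2u_1^2=u_1u_2$ and $2u_{n-1}^2=u_{n-1}u_{n-2}$ of $\D_n$, and that the antisymmetry cancellation still applies at the edges where one of the neighbouring $y_j$ is absent. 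Verifying that the symmetry-in-$(x_i,x_{i+1})$ of the cofactor really does hold in these edge cases, and that the monovariant strictly decreases so the induction terminates, is where I would spend the most care.
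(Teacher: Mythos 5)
Your handling of relations (1) and (2) is fine and matches the paper's level of detail (the paper likewise just notes that $y_1\cdots y_{n-1}$ expands into $(n-1)!$ monomials, each contributing $1$ under divided symmetrization). The two places where your argument genuinely breaks are relation (3) and the uniqueness step. For relation (3), your reduction to showing $\ds{(x_i-x_{i+1})\,y_i^{c_i-1}\prod_{j\neq i}y_j^{c_j}}{n}=0$ is correct, but the key claim that the cofactor $y_i^{c_i-1}\prod_{j\neq i}y_j^{c_j}$ is symmetric in $x_i,x_{i+1}$ is false: $y_i=x_1+\cdots+x_i$ contains $x_i$ but not $x_{i+1}$, so $y_i^{c_i-1}$ fails to be $s_i$-invariant as soon as $c_i\geq 2$ --- which is exactly the hypothesis of (3). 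Your parenthetical justification covers only the factors $y_j$ with $j\neq i$ and silently drops the leftover $y_i^{c_i-1}$. The cancellation mechanism is also stated backwards: $\anti_n$ annihilates $s_i$-\emph{symmetric} polynomials (pair $\sigma$ with $\sigma s_i$), not $s_i$-antisymmetric ones (note $\anti_n(\Delta_n)=n!\,\Delta_n\neq 0$); and Proposition~\ref{prop:phi_tau_n} gives $\ds{P}{n}=\frac{1}{\Delta_n}\anti_n\bigl(P\prod_{1\leq i<j\leq n,\, j\neq i+1}(x_i-x_j)\bigr)$, with an extra product that is neither symmetric nor antisymmetric in $x_i,x_{i+1}$, so no such parity argument applies directly. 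A genuinely different property of divided symmetrization is needed here; the paper does not supply one but cites Petrov's Theorem 4 for this point.

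For uniqueness, relation (3) is a mean-value property, $A_c=\tfrac12(A_{c'}+A_{c''})$, not a reduction to ``smaller'' compositions, and no monovariant can make \emph{both} neighbours strictly smaller. Concretely, for $n=4$ the relations give $2A_{(2,1,0,0)}=A_{(1,2,0,0)}$ (the other neighbour vanishes by (1)) and $2A_{(1,2,0,0)}=A_{(2,1,0,0)}+A_{(1,1,1,0)}$: the two unknowns determine each other circularly and must be solved as a coupled linear system, so a well-founded descent cannot terminate. The correct argument, and the one the paper uses, is a maximum principle applied to the difference $\delta$ of two solutions: if $\delta$ attains its maximum at some $c$, then (3) forces the maximum to be attained at both neighbours as well, and iterating reaches a terminal composition (all parts at most $1$), where $\delta=0$ by (1) and (2); hence the maximum is $0$, and the symmetric argument for the minimum gives $\delta\equiv 0$.
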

\begin{proof}
 Let us sketch Petrov's proof. We have already addressed the first point. The second relation follows immediately by realizing that $y^c$ is a sum of $(n-1)!$  monomials when $c=(1^{n-1},0)$, and each such monomial contributes $1$ upon divided symmetrization; see \cite[Section 3.3]{DS}.
For the third relation we refer the reader to \cite[Theorem 4]{Pet18}; it relies on a nice  property of divided symmetrization.

The uniqueness follows from the maximum principle: given two solutions to these relations, conside their difference $\delta_{(c_1,\dots,c_n)}$. Assume that $\delta$ achieves its maximum value $m$ at $(c_1,\dots,c_n)$: then the third relation implies that $m$ is also achieved at $(c_1,\dots,c_{i-1}+1,{c_i}-1,\dots,c_n)$ and $(c_1,\dots,c_i-1,c_{i+1}+1,\dots,c_n)$. Applying this argument repeatedly, we can reach all compositions  $c_{\mathrm{terminal}}$ that have all but one part equal to $1$. Since $\delta_{c_{\mathrm{terminal}}}=0$ by the first two relations, this shows that $\delta=0$ everywhere.
\end{proof}

{\noindent \bf Probabilistic interpretation.} Petrov turns this characterization into a probabilistic process as follows: Consider $n-1$ coins distributed among the vertices of a regular $n$-gon, denoted by $v_1$ through $v_n$ going cyclically.
A \emph{robbing} move consists of picking a vertex $v_i$ that has at least $2$ coins, and transferring one coin to either vertex $v_{i-1}$ or $v_{i+1}$ with equal probability. Proceed making such moves until no vertices can be robbed any further. The process terminates almost surely. Note that there are $n$ terminal configurations, each having $1$ coin at $n-1$ sites and $0$ on the remaining site.
Given $c_1,\dots,c_n$ such that $\sum_{1\leq i\leq n}c_i=n-1$, let $\mathrm{prob}(c_1,\dots,c_n)$ denote the probability of starting from the initial assignment of $c_i$ coins to $v_i$ and ending in the configuration where $v_n$ has no coins.
\begin{theorem}\textup{(}\cite[Theorem 5]{Pet18}\textup{)}
  Assuming the notation established earlier, we have that
  \[
    \mathrm{prob}(c_1,\dots,c_n)=\frac{A_{(c_1,\dots,c_n)}}{(n-1)!}.
  \]
\end{theorem}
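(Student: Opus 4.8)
The plan is to show that the rescaled probabilities $p_c\coloneqq (n-1)!\cdot\mathrm{prob}(c_1,\dots,c_n)$ satisfy the three relations of Lemma~\ref{lem:uniqueness_via_recurrence_petrov} that characterize the mixed Eulerian numbers; the theorem then follows at once from the uniqueness asserted there. Relation (2) is immediate: the configuration $(1^{n-1},0)$ places a single coin on each of $v_1,\dots,v_{n-1}$ and none on $v_n$, so no vertex is robbable, the process halts instantly in a state where $v_n$ is empty, and hence $\mathrm{prob}(1^{n-1},0)=1$, giving $p_{(1^{n-1},0)}=(n-1)!$. It remains to verify relations (1) and (3) for $p_c$.

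For relation (1) I would exploit a simple invariant. A robbing move alters exactly two adjacent vertices, the robbed one losing a coin and a neighbour gaining one; thus the count $c_n$ at $v_n$ can only decrease when $v_n$ itself is robbed, which requires $c_n\geq 2$ and leaves $c_n-1\geq 1$. Consequently the condition $c_n>0$ is preserved by every robbing move. Starting from a configuration with $c_n>0$, every reachable configuration, and in particular every terminal one, still has $c_n\geq 1$, so $v_n$ is never the empty site and $\mathrm{prob}(c_1,\dots,c_n)=0$. This yields $p_c=0$ whenever $c_n>0$, including the terminal configurations whose single $0$ lies away from position $n$, which is precisely what the maximum-principle argument behind Lemma~\ref{lem:uniqueness_via_recurrence_petrov} requires.

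Relation (3) is where the real work lies. Fix $i\leq n-1$ with $c_i\geq 2$ and perform a first-step analysis in which $v_i$ is the first vertex robbed: with probability $\tfrac12$ each the coin moves to $v_{i-1}$ or to $v_{i+1}$ (reading $v_0=v_n$), producing exactly the two configurations $c'$ and $c''$ indexing the right-hand side of relation (3). This yields $\mathrm{prob}(c)=\tfrac12\mathrm{prob}(c')+\tfrac12\mathrm{prob}(c'')$, i.e. $2p_c=p_{c'}+p_{c''}$, \emph{provided} the terminal distribution does not depend on the order in which robbable vertices are selected. Establishing this order-independence — the \emph{abelian property} of the process — is the main obstacle: I would prove it by a local confluence (diamond) argument, checking that robbing two distinct robbable vertices in either order, together with the accompanying coin flips, induces the same distribution on the resulting configurations (non-adjacent vertices commute exactly, while the adjacent case needs an explicit coupling of the four outcomes), and then upgrading local to global confluence using the almost sure termination already noted. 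With the abelian property in hand, both the well-definedness of $\mathrm{prob}$ and relation (3) follow, and Lemma~\ref{lem:uniqueness_via_recurrence_petrov} identifies $p_c$ with $A_{(c_1,\dots,c_n)}$, completing the proof.
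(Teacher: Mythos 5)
Your proposal is correct and follows the same route as the paper, which records exactly this argument: one checks that $(n-1)!\,\mathrm{prob}(c_1,\dots,c_n)$ satisfies the three relations of Lemma~\ref{lem:uniqueness_via_recurrence_petrov} and then invokes the uniqueness established there. You are in fact more explicit than the paper about the one genuinely delicate point, namely the abelian (order-independence) property needed both for $\mathrm{prob}$ to be well defined and for the first-step analysis behind relation (3), and your local-confluence-plus-almost-sure-termination sketch is the standard and correct way to handle it.
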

Petrov arrives at this result by noting that $(n-1)!\mathrm{prob}(c_1,\dots,c_n)$ satisfies the defining relations of the mixed Eulerian numbers listed in Lemma~\ref{lem:uniqueness_via_recurrence_petrov}.

\begin{example}
Suppose $(c_1,c_2,c_3,c_4)=(2,1,0,0)$.
It can be checked that $p\coloneqq \mathrm{prob}(c_1,\dots,c_4)$ satisfies $p=\frac{1}{4}(1+p)$ implying that $p=\frac{1}{3}$.
This in turn implies that $A_{(2,1,0,0)}=2$, which is verified easily by expanding $y^{(2,1,0,0)}=x_1^3+x_1^2x_2$ and noting that both monomials give $1$ upon divided symmetrization.
\end{example}

The preceding probabilistic interpretation renders transparent an interesting  relation satisfied by the mixed Eulerian numbers.
Define the \emph{cyclic class} of a sequence $c\coloneqq (c_1,\dots,c_n)\in \wcp{n}$ to be the set of all sequences obtained as cyclic rotations  of $c$.
Let us denote this cyclic class by $\mathsf{Cyc}(c)$. It is clear that $|\mathsf{Cyc}(c)|=n$.

\begin{proposition}\textup{(}\cite[Theorem 16.4]{Pos09}, \cite[Theorem 4]{Pet18}\textup{)}
\label{prop:mixed_cyclic}
For $c\in \wcp{n}$, we have that
\[
\sum_{c'\in \mathsf{Cyc}(c)}\frac{A_{c'}}{(n-1)!}=1.
\]
\end{proposition}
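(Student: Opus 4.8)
The plan is to read the identity off Petrov's probabilistic description, which turns the cyclic sum rule into a statement of total probability. First I would record the structure of the terminal configurations of the robbing process: a configuration is terminal exactly when no vertex holds two or more coins, so with $n-1$ coins on the $n$ vertices $v_1,\dots,v_n$ a terminal configuration has one empty vertex and a single coin on each of the other $n-1$ vertices. There are thus precisely $n$ terminal configurations, indexed by which vertex is empty. For a starting configuration $c\in\wcp{n}$ and $1\le k\le n$, write $p_k(c)$ for the probability that the process started from $c$ halts with $v_k$ empty. Since the process terminates almost surely, $\sum_{k=1}^n p_k(c)=1$, and by definition $\mathrm{prob}(c)=p_n(c)$.

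Next I would exploit the rotational symmetry of the dynamics: the $n$-gon and the robbing move are invariant under the rotation $v_i\mapsto v_{i+1}$. Letting $\sigma$ denote the induced shift of configurations, $(\sigma c)_i=c_{i-1}$ with indices read modulo $n$, coupling the two processes through this rotation gives $p_k(\sigma c)=p_{k-1}(c)$ for all $k$, and hence by iteration $\mathrm{prob}(\sigma^{j}c)=p_n(\sigma^{j}c)=p_{n-j}(c)$. As $j$ runs over $0,1,\dots,n-1$ the index $n-j$ runs over all of $\{1,\dots,n\}$, so summing and invoking total probability yields
\[
\sum_{j=0}^{n-1}\mathrm{prob}(\sigma^{j}c)=\sum_{k=1}^{n}p_k(c)=1.
\]

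It then remains to identify the left-hand sum with the sum over $\mathsf{Cyc}(c)$, which requires checking that $c,\sigma c,\dots,\sigma^{n-1}c$ are pairwise distinct. If $\sigma^{d}c=c$ for some $0<d<n$, then $c$ has a minimal period $p_0\mid n$ with $p_0<n$; writing $n=p_0 m$ with $m\ge 2$, the coin total $n-1=\sum_i c_i$ equals $m$ times the sum over a single period, so $m\mid n-1$, while also $m\mid n$, forcing $m\mid 1$, a contradiction. Hence $|\mathsf{Cyc}(c)|=n$ as asserted, the displayed sum is exactly $\sum_{c'\in\mathsf{Cyc}(c)}\mathrm{prob}(c')$, and substituting $\mathrm{prob}(c')=A_{c'}/(n-1)!$ from \cite[Theorem 5]{Pet18} gives the claim.

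The argument is short and I do not anticipate a serious obstacle; the single delicate point is the bookkeeping of the rotational symmetry, namely verifying that advancing the starting configuration by $\sigma$ advances the terminal distribution by one vertex, which is immediate once the two runs are coupled through the rotation of the $n$-gon, together with the elementary distinctness check via $\gcd(n,n-1)=1$. A purely algebraic route is also available and is the one behind the cited \cite[Theorem 16.4]{Pos09} and \cite[Theorem 4]{Pet18}: the defining relation (3) of Lemma~\ref{lem:uniqueness_via_recurrence_petrov} is cyclically invariant under the convention $c_0=c_n$, so summing over a cyclic class is precisely what causes the symmetry-breaking relation (1) to wash out. I would nonetheless present the probabilistic derivation for its transparency.
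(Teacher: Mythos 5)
Your argument is correct and is essentially the one the paper intends: the proposition is stated as an immediate consequence of Petrov's probabilistic interpretation ("the preceding probabilistic interpretation renders transparent..."), and you have simply made the total-probability and rotation-coupling steps explicit, together with the distinctness of the $n$ cyclic shifts via $\gcd(n,n-1)=1$ (the same coprimality argument the paper uses later in Proposition~\ref{prop:distinct_Lukasiewicz}). No gaps; the write-up matches the cited route through \cite[Theorem 5]{Pet18}.
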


We conclude this section with a discussion on a special class of sequences $c$.
We say that  $c\in \wcp{n}$ is \emph{connected} if $c$ comprises a solitary contiguous block of positive integers and has $0$s elsewhere.
For instance $(0,1,1,2,0)$ is connected, whereas $(0,1,0,3,0)$ is not.
Our next result is presented in recent work of Berget, Spink and Tseng \cite[Section 7]{Ber20}, and was also established independently by the authors.
\begin{proposition}\label{prop:mixed_connected}
	Let ${\bf a}=(a_1,\ldots,a_p)$ be a strong composition of $n-1$. For $i,j$ nonnegative integers let $0^i{\bf a}0^j$ denote the sequence obtained by appending $i$ 0s before ${\bf a}$ and $j$  0s after it.
	Consider the polynomial
	\[
	\widetilde{A}_{\bf a}(t)=\sum_{m=0}^{n-p-1}A_{0^m{\bf a}0^{n-p-m}} t^m.
	\]
	We have that
	\[
	\sum_{j\geq 0}(1+j)^{a_1}(2+j)^{a_2}\cdots (p+j)^{a_p}t^j =\frac{\widetilde{A}_{{\bf a}}(t)}{(1-t)^{n}} .
	\]
\end{proposition}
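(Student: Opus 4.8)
The plan is to convert the asserted identity of rational functions into a single polynomial identity in $j$, and then to recognize its coefficients as mixed Eulerian numbers through the recursive characterization in Lemma~\ref{lem:uniqueness_via_recurrence_petrov}. Set $P_{\mathbf{a}}(j)\coloneqq\prod_{k=1}^{p}(j+k)^{a_k}$, a polynomial of degree $n-1$ in $j$. The polynomials $\binom{n-1+j-m}{n-1}$, $m\geq 0$, expand any polynomial of degree $\leq n-1$ uniquely, so write $P_{\mathbf{a}}(j)=\sum_{m\geq 0}h_m\binom{n-1+j-m}{n-1}$ for the resulting coefficients $h_m=h_m(P_{\mathbf{a}})$. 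Since $\sum_{j\geq 0}\binom{n-1+j-m}{n-1}t^j=t^m/(1-t)^n$ for $0\leq m\leq n-1$, the proposition is equivalent to the two assertions: $h_m=A_{0^m\mathbf{a}0^{n-p-m}}$ for $0\leq m\leq n-p-1$, and $h_m=0$ for $m\geq n-p$.

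I would first dispatch the vanishing. From $(1-t)^n\sum_{j\geq 0}P_{\mathbf{a}}(j)t^j=\sum_{m}h_m t^m$ one reads off the finite-difference formula $h_m=\sum_{i=0}^{m}(-1)^i\binom{n}{i}P_{\mathbf{a}}(m-i)$. Because $P_{\mathbf{a}}$ vanishes at $j=-1,\dots,-p$, every term with $m+1\leq i\leq m+p$ is zero; when $m\geq n-p$ these absorb all indices $i=m+1,\dots,n$, so $h_m=\sum_{i=0}^{n}(-1)^i\binom{n}{i}P_{\mathbf{a}}(m-i)$, an $n$-th finite difference of a polynomial of degree $n-1$, which vanishes.

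The core of the argument is to show that the quantities $B_{0^m\mathbf{a}0^{n-p-m}}\coloneqq h_m(P_{\mathbf{a}})$ (for $c_n=0$), extended by $B_c\coloneqq 0$ whenever $c_n>0$, satisfy the three defining relations of Lemma~\ref{lem:uniqueness_via_recurrence_petrov} among connected compositions. Relation (1) holds by fiat, and relation (2) is the computation $P_{(1^{n-1})}(j)=\prod_{k=1}^{n-1}(j+k)=(n-1)!\binom{n-1+j}{n-1}$, giving $h_0=(n-1)!$. The content is relation (3). A legal coin move at an \emph{interior} cell of the block replaces only the local factors and, via the arithmetic identity $(j+k-1)+(j+k+1)=2(j+k)$, reduces to the polynomial identity
\begin{equation*}
P_{\mathbf{a}'}(j)+P_{\mathbf{a}''}(j)=2P_{\mathbf{a}}(j),
\end{equation*}
whence extracting $h_m$ gives $2B_c=B_{c'}+B_{c''}$. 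A move at the left end of the block shifts the support and lowers $m$ by one; here I would use that $h_{m-1}(Q)=h_m\big(Q(j-1)\big)$ whenever $Q(-1)=0$, which applies because the block-extending shape carries a factor $(j+1)$ vanishing at $j=-1$, and the mirror identity $j+(j+2)=2(j+1)$ again collapses the relation to a polynomial identity. The delicate point is the wrap-around at $m=0$: a leftward move there lands in a composition with $c_n>0$, where $B=0$, and one checks consistency against the surviving identity $P_{\mathbf{a}''}(0)=2P_{\mathbf{a}}(0)$ obtained by specializing to $j=0$.

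Finally I would invoke the maximum-principle uniqueness of Lemma~\ref{lem:uniqueness_via_recurrence_petrov}: relation (3) sends a connected composition to connected compositions or to ones with $c_n>0$ (where both $A$ and $B$ vanish), and the terminal all-ones configurations reachable are $(1^{n-1},0)$ and $(0,1^{n-1})$, handled by relations (2) and (1) respectively. This forces $B_c=A_c$ for all connected $c$, which together with the vanishing from the second paragraph establishes both assertions and hence the proposition. The main obstacle is the bookkeeping in relation (3): matching each coin move to the correct pair of neighboring compositions, tracking the change of $m$ through the shift lemma, and confirming that the boundary factors make all correction terms vanish so that every case collapses to the single arithmetic identity on three consecutive integers.
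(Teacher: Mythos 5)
The paper itself gives no proof of Proposition~\ref{prop:mixed_connected}: it is attributed to Berget--Spink--Tseng \cite[Section 7]{Ber20} and stated to have been ``established independently by the authors,'' so there is no in-text argument to compare yours against. Your proof is, however, a valid self-contained argument, and it uses exactly the tool the paper deploys elsewhere (in the proof of Theorem~\ref{thm:positive_formula}): verify Petrov's three relations from Lemma~\ref{lem:uniqueness_via_recurrence_petrov} and invoke the maximum-principle uniqueness. The reduction of relation (3) to $(j+k-1)+(j+k+1)=2(j+k)$, the shift identity $h_{m-1}(Q)=h_m(Q(j-1))$ for $Q(-1)=0$, and the observation that moves from a connected composition stay inside the class of connected compositions union $\{c_n>0\}$ are all correct, as is the finite-difference argument for $h_m=0$ when $m\geq n-p$.

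One case in your bookkeeping for relation (3) is left implicit and should be spelled out: a rightward move at the \emph{right} end of the block (position $i=m+p$, with $a_p\geq 2$). When $m<n-p-1$ this extends the block to $\mathbf{a}''=(a_1,\ldots,a_p-1,1)$ with no shift in $m$, and the identity $(j+p-1)+(j+p+1)=2(j+p)$ closes the case; but when $m=n-p-1$ the resulting composition has $c_n=1>0$, so $B_{c''}=0$ by fiat, and consistency with the polynomial identity requires $h_{n-p-1}(P_{\mathbf{a}''})=0$ for the $(p+1)$-part block $\mathbf{a}''$. This does hold — it is precisely your vanishing statement $h_m=0$ for $m\geq n-p'$ applied with $p'=p+1$ — but you only discuss the analogous wrap-around at $m=0$ for leftward moves. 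Adding that one sentence (and noting that ``interior cell'' must be read to include the block-extending move at the right end) completes the argument; nothing further is missing.
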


\begin{example}
	Consider $c=(3,0,0,0)\in \wcp{4}$. 
	Since $\sum_{j\geq 0}(j+1)^3t^j=\frac{1+4t+t^2}{(1-t)^4}$, Proposition~\ref{prop:mixed_connected} tells us that $A_{(3,0,0,0)}=1$, $A_{(0,3,0,0)}=4$ and $A_{(0,0,3,0)}=1$, which are the well-known Eulerian numbers counting permutations in $S_3$ according to descents.
\end{example}

\section[Properties]{Properties of the numbers \texorpdfstring{$a_w$}{}}
\label{sec:properties}

Our starting point in this section is Klyachko's Theorem~\ref{thm:klyachko}, from which we deduce a formula for $a_w$ in terms of mixed Eulerian numbers (Theorem~\ref{thm:positive_formula}). From the properties of these mixed Eulerian numbers reviewed in Propositions~\ref{prop:mixed_cyclic} and ~\ref{prop:mixed_connected}, we obtain related properties of $a_w$ in Theorems~\ref{thm:cyclic_sum} and ~\ref{thm:quasiindecomposable} respectively.

\subsection[Positive formula]{A positive formula for $a_w$ and first properties}
\label{sub:positivity}

To start with, we have the following invariance properties of $a_w$ easily deduced from Theorem~\ref{thm:klyachko}:

\begin{proposition}
\label{prop:stability}
For any $w\in\sgrpp_n$, $a_w=a_{w^{-1}}$ and $a_w=a_{w_oww_o}$.
\end{proposition}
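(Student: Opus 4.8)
The plan is to derive both invariance properties directly from Klyachko's formula (Theorem~\ref{thm:klyachko}), $a_w=\int_{\D_n} M_w(u_1,\ldots,u_{n-1})$, by exhibiting bijections on reduced words that preserve the relevant data. Recall that $M_w(u_1,\ldots,u_{n-1})=\sum_{\mathbf{i}\in\reduced(w)}u_{i_1}\cdots u_{i_\ell}$, so the functional $\int_{\D_n}M_w$ depends on $w$ only through the multiset of reduced words $\reduced(w)$, each contributing a monomial in the $u_i$. Thus it suffices, for each symmetry $w\mapsto w'$, to produce a length-preserving bijection $\reduced(w)\to\reduced(w')$ under which corresponding monomials have the same image under $\int_{\D_n}$.

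For the inverse symmetry $a_w=a_{w^{-1}}$, the key observation is that word reversal $\mathbf{i}=i_1\cdots i_\ell\mapsto \mathbf{i}^r=i_\ell\cdots i_1$ gives a bijection $\reduced(w)\to\reduced(w^{-1})$, since $(s_{i_1}\cdots s_{i_\ell})^{-1}=s_{i_\ell}\cdots s_{i_1}$ and each $s_i$ is an involution. Reversal does not change the multiset of letters, hence $c(\mathbf{i})=c(\mathbf{i}^r)$, and a fortiori the commutative monomial $u_{i_1}\cdots u_{i_\ell}=u_{i_\ell}\cdots u_{i_1}$ is literally unchanged in the commutative algebra $\D_n$. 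Therefore $M_{w^{-1}}=M_w$ as elements of $\D_n$, and applying $\int_{\D_n}$ yields $a_{w^{-1}}=a_w$ immediately.

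For the conjugation symmetry $a_w=a_{w_oww_o}$, I would use the standard fact that conjugation by $w_o$ acts on simple transpositions by $w_o s_i w_o=s_{n-i}$, so it induces the bijection $\reduced(w)\to\reduced(w_oww_o)$ sending $\mathbf{i}=i_1\cdots i_\ell$ to $\bar{\mathbf{i}}=(n-i_1)\cdots(n-i_\ell)$. Under this map each monomial $u_{i_1}\cdots u_{i_\ell}$ becomes $u_{n-i_1}\cdots u_{n-i_\ell}$, i.e.\ $M_{w_oww_o}$ is obtained from $M_w$ by the substitution $u_i\mapsto u_{n-i}$. The crucial point is that this substitution is an algebra automorphism of $\D_n$: inspecting the defining relations $2u_i^2=u_iu_{i-1}+u_iu_{i+1}$ (with the boundary relations $2u_1^2=u_1u_2$ and $2u_{n-1}^2=u_{n-1}u_{n-2}$), one sees they are invariant under $i\mapsto n-i$, since this reflection swaps the two boundary relations and preserves the interior ones. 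Consequently the automorphism sends $u_{[n-1]}$ to itself and hence preserves the top-coefficient functional $\int_{\D_n}$, giving $a_{w_oww_o}=\int_{\D_n}M_{w_oww_o}=\int_{\D_n}M_w=a_w$.

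I expect the only genuine point requiring care to be verifying that $u_i\mapsto u_{n-i}$ is a well-defined automorphism of $\D_n$ preserving $\int_{\D_n}$; the rest is a routine check that reversal and the $w_o$-conjugation bijections land in the correct reduced-word sets. The reflection symmetry of the presentation of $\D_n$ is visually evident, so this obstacle is mild. An alternative, should one prefer to avoid the algebra $\D_n$, would be to run the same two bijections through the divided-symmetrization formula of Theorem~\ref{thm:anderson_tymoczko}, using $\ds{\schub_{w^{-1}}}{n}=\ds{\schub_w}{n}$ and the symmetry of divided symmetrization under $x_i\mapsto x_{n+1-i}$; but the $\D_n$ route is cleaner since both symmetries become manifest at the level of reduced words.
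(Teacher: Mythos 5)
Your proposal is correct and follows essentially the same route as the paper: Klyachko's formula combined with the reversal bijection $\mathbf{i}\mapsto\mathbf{i}^r$ for the inverse symmetry and the bijection $\mathbf{i}\mapsto(n-i_1)\cdots(n-i_\ell)$ together with the reflection symmetry of the presentation of $\D_n$ for the $w_o$-conjugation symmetry. Your extra care in checking that $u_i\mapsto u_{n-i}$ is an automorphism preserving $\int_{\D_n}$ is a welcome elaboration of what the paper leaves implicit.
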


\begin{proof}
We have the equality of polynomials $M_w=M_{w^{-1}}$ since $i_1\ldots i_{n-1}\mapsto i_{n-1}\ldots i_1$ is a bijection from $\reduced(w)$ to $\reduced(w^{-1})$, and so we can conclude by Theorem~\ref{thm:klyachko}.

Also, $i_1\cdots i_{n-1}\mapsto (n-i_{1})\cdots (n-i_{n-1})$ is a bijection from $\reduced(w)$ to $\reduced(w_oww_o)$, so $M_{w_oww_o}$ is obtained from $M_w$ after the substitution $x_i\mapsto x_{n-i}$. Because of the symmetry in the presentation of $\D_n$, Theorem~\ref{thm:klyachko} gives us again that $a_w=a_{w_oww_o}$.
\end{proof}

The invariance under $w_o$-conjugation is also a special case of~\cite[Proposition 3.8]{And10}, which can be explained geometrically via the duality on $\flag{n}$.
The authors know of no such explanation for the invariance under taking inverses.

We can now state our first formula.

\begin{theorem}
\label{thm:positive_formula}
For any $w\in\sgrpp_n$ and $\mathbf{i}\in \reduced(w)$, let $c(\mathbf{i})=(c_1,\ldots,c_{n-1})$ where $c_j$ counts the occurrences of
$j$ in $\mathbf{i}$. Then
\begin{equation}
\label{eq:positive_formula}
a_w=\sum_{\mathbf{i}\in \reduced(w)}\frac{A_{c(\mathbf{i})}}{(n-1)!}.
\end{equation}
\end{theorem}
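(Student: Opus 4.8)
The plan is to combine Klyachko's Theorem~\ref{thm:klyachko} with the recursive characterization of mixed Eulerian numbers from Lemma~\ref{lem:uniqueness_via_recurrence_petrov}. The key observation is a formal resemblance: Klyachko's algebra $\D_n$ has defining relations $2u_i^2=u_iu_{i-1}+u_iu_{i+1}$ (with the boundary conventions $u_0=u_n=0$), while the third relation of Lemma~\ref{lem:uniqueness_via_recurrence_petrov} governs how mixed Eulerian numbers transform. My aim is to show that the functional $U\mapsto \int_{\D_n}U$ applied to a monomial $u^c=u_1^{c_1}\cdots u_{n-1}^{c_{n-1}}$ recovers exactly $A_{c}/(n-1)!$ where $c=(c_1,\ldots,c_{n-1},0)$, after which Theorem~\ref{thm:klyachko} gives the result immediately by linearity, since $M_w(u_1,\ldots,u_{n-1})=\sum_{\mathbf{i}\in\reduced(w)}u^{c(\mathbf{i})}$.

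First I would make precise the claim to be proved: define a function $B$ on compositions $c\in\wcp{n}$ by $B_c=(n-1)!\int_{\D_n}u^{(c_1,\ldots,c_{n-1})}$ when $c_n=0$, and $B_c=0$ when $c_n>0$; I then want $B_c=A_c$ for all $c\in\wcp{n}$. By the uniqueness statement in Lemma~\ref{lem:uniqueness_via_recurrence_petrov}, it suffices to verify that $B$ satisfies the three characterizing relations. Relation (1) holds by construction. Relation (2), namely $B_{(1^{n-1},0)}=(n-1)!$, amounts to checking that $\int_{\D_n}u_{[n-1]}=1$, which is exactly the definition of the top-coefficient functional since $u_1u_2\cdots u_{n-1}=u_{[n-1]}$ is already the squarefree basis element. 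The substance lies in relation (3): I must show that for $i\leq n-1$ and $c_i\geq 2$,
\[
2\,\int_{\D_n}u^{c}=\int_{\D_n}u^{c'}+\int_{\D_n}u^{c''},
\]
where $c'$ and $c''$ are the compositions obtained by shifting a unit from position $i$ to positions $i-1$ and $i+1$ respectively (with $c_0$ read as $c_n$). This is where the relations of $\D_n$ enter directly: since $c_i\geq 2$, the monomial $u^c$ contains a factor $u_i^2$, and the defining relation $2u_i^2=u_iu_{i-1}+u_iu_{i+1}$ in $\D_n$ rewrites $2u^c$ as $u^{c'}+u^{c''}$ as elements of $\D_n$; applying $\int_{\D_n}$ and using its linearity yields the desired identity. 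I would treat the boundary cases $i=1$ and $i=n-1$ using the special relations $2u_1^2=u_1u_2$ and $2u_{n-1}^2=u_{n-1}u_{n-2}$, checking that they match relation (3) under the convention $c_0=c_n$ (so that the "missing" term $u^{c'}$ corresponds to moving a coin off position $1$ toward the phantom position $0\equiv n$, consistent with $A_c=0$ when $c_n>0$).

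The main obstacle I anticipate is the careful bookkeeping at the boundary: I must confirm that the cyclic convention $c_0=c_n$ in Lemma~\ref{lem:uniqueness_via_recurrence_petrov} is compatible with the genuinely non-cyclic (linear) presentation of $\D_n$, and in particular that moving a coin past position $1$ toward index $n$ lands in the regime $c_n>0$ where both $A$ and $B$ vanish. Once this match is established for the three relations, uniqueness forces $B=A$ identically, and then
\[
a_w=\int_{\D_n}M_w(u_1,\ldots,u_{n-1})=\sum_{\mathbf{i}\in\reduced(w)}\int_{\D_n}u^{c(\mathbf{i})}=\sum_{\mathbf{i}\in\reduced(w)}\frac{A_{c(\mathbf{i})}}{(n-1)!}
\]
follows at once, completing the proof. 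A cleaner alternative worth keeping in reserve is to argue via Theorem~\ref{thm:anderson_tymoczko} instead: there $a_w=\ds{\schub_w}{n}$, and one could expand $\schub_w$ through the BJS/reduced-word description and identify each contribution $\ds{y^{c(\mathbf{i})}}{n}=A_{c(\mathbf{i})}$ directly from the definition of mixed Eulerian numbers, bypassing $\D_n$ altogether; I expect the $\D_n$ route to be shorter given that Theorem~\ref{thm:klyachko} already packages $M_w$ in the right form.
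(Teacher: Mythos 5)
Your proposal is correct and follows essentially the same route as the paper: both reduce the statement via Theorem~\ref{thm:klyachko} to the identity $(n-1)!\int_{\D_n}u^c=A_c$, which is then established by checking that the left-hand side satisfies the three characterizing relations of Lemma~\ref{lem:uniqueness_via_recurrence_petrov} (the third coming precisely from the defining relations of $\D_n$) and invoking uniqueness. Your explicit verification of the boundary cases $i=1$ and $i=n-1$ against the convention $c_0=c_n$ is a correct elaboration of a step the paper leaves implicit.
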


\begin{proof}
By Theorem~\ref{thm:klyachko}, it is enough to show that, for any weak composition $c=(c_1,\dots,c_{n-1})$ of $n-1$,
\begin{equation}
\label{eq:uc_and_yc}
\int_{\D_n} u^c=\frac{A_{c}}{(n-1)!}.
\end{equation}

We now claim that $(n-1)!\int_{\D_n} u^c$ satisfies the three conditions of Lemma~\ref{lem:uniqueness_via_recurrence_petrov}. Indeed the first two are immediate, while the third follows precisely from the relations of $\D_n$. By uniqueness in Lemma~\ref{lem:uniqueness_via_recurrence_petrov}, $(n-1)!\int_{\D_n} u^c=A_{c}$ as wanted.

Equation~\eqref{eq:uc_and_yc} can also be deduced geometrically  from the interpretation of $A_c$ as a normalized mixed volume, cf. \cite{Ber20,Pos09}.
\end{proof}

\begin{example}
Consider $w=32415\in\sgrpp_5$. It has three reduced words $2123,1213$ and $1231$. Given that $A_{2,1,1,0}=6$ and $A_{1,2,1,0}=12$, we obtain $a_w=\frac{1}{24}(12+6+6)=1$.
\end{example}

The following immediate corollary answers a question asked in~\cite[Problem 6.6]{HarHor18}.

\begin{corollary}
\label{cor:aw_positivity}
For any $w\in\sgrpp_n$, $a_w>0$;
\end{corollary}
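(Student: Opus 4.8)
The plan is to observe that Corollary~\ref{cor:aw_positivity} is an immediate consequence of Theorem~\ref{thm:positive_formula} together with the positivity of the mixed Eulerian numbers, so the entire task reduces to checking that the sum in \eqref{eq:positive_formula} is genuinely nonempty and consists of strictly positive summands. First I would recall that for $w\in\sgrpp_n$ we have $\ell(w)=n-1$, and since every permutation $w\in\sgrp_n$ admits at least one reduced expression, the index set $\reduced(w)$ is nonempty. Thus the sum
\[
a_w=\sum_{\mathbf{i}\in \reduced(w)}\frac{A_{c(\mathbf{i})}}{(n-1)!}
\]
has at least one term. Each composition $c(\mathbf{i})=(c_1,\dots,c_{n-1})$ records the letter multiplicities of a reduced word for $w$, hence satisfies $\sum_j c_j=\ell(w)=n-1$, so $c(\mathbf{i})\in\wcp{n}$ (with $c_n=0$) and the corresponding mixed Eulerian number $A_{c(\mathbf{i})}$ is defined.

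The key step is then to invoke the positivity of the mixed Eulerian numbers. As recalled in the discussion preceding Lemma~\ref{lem:uniqueness_via_recurrence_petrov}, following Postnikov~\cite[Section 16]{Pos09}, every mixed Eulerian number $A_{(c_1,\dots,c_{n-1})}$ equals $(n-1)!$ times the mixed volume of a Minkowski sum of hypersimplices, and is therefore a strictly positive integer (a nondegenerate Minkowski sum of full-dimensional polytopes has positive mixed volume). Consequently each summand $A_{c(\mathbf{i})}/(n-1)!$ is strictly positive. Since the sum is nonempty and all terms are positive, we conclude $a_w>0$.

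The proof is essentially a one-line deduction, so there is no genuine obstacle — the substantive content has already been established in Theorem~\ref{thm:positive_formula} (which expresses $a_w$ as this sum) and in the positivity of the $A_c$ cited from~\cite{Pos09}. The only points requiring the briefest care are the nonemptiness of $\reduced(w)$ and the verification that the strict positivity of mixed volumes applies here; both are standard. I would therefore present the corollary as following immediately from Theorem~\ref{thm:positive_formula} upon noting that $\reduced(w)\neq\varnothing$ and that each $A_{c(\mathbf{i})}>0$.
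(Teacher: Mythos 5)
Your proposal is correct and matches the paper's own argument: Corollary~\ref{cor:aw_positivity} is deduced directly from Theorem~\ref{thm:positive_formula}, using that $\reduced(w)$ is nonempty and that each mixed Eulerian number $A_{c(\mathbf{i})}$ is a positive integer by its mixed-volume interpretation. Your write-up simply spells out the two routine checks that the paper leaves implicit.
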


\begin{proof}
It follows directly from ~\eqref{eq:positive_formula} since it expresses $a_w$ as a nonempty sum of positive rational numbers.
\end{proof}

From Section \ref{sec:mixed_eulerian} we know also that $A_{c}\leq (n-1)!$ for any $c$, so that $a_w\leq |\reduced(w)|$ by Theorem~\ref{thm:positive_formula}. We will get a quantitative version of the inequality in Theorem~\ref{thm:cyclic_sum}.

\begin{remark}
It is worth remarking that if we consider the computation of $A_{(c_1,\dots,c_{n-1})}$ using its original definition, we must deal with $\ds{y_1^{c_1}\dots y_{n-1}^{c_{n-1}}}{n}$.
By using Monk's rule \cite{Mo59} repeatedly, we can express $y_1^{c_1}\dots y_{n-1}^{c_{n-1}}$ as a positive integral sum of certain Schubert polynomials in the variables $x_1,\dots,x_{n-1}$.
Applying divided symmetrization to the resulting equality results in an expression for $A_{(c_1,\dots,c_{n-1})}$ expressed as a positive integral combination of certain $a_w$'s.
It appears nontrivial to `invert' this procedure and obtain the expression in Theorem~\ref{thm:positive_formula} for the $a_w$.
At any rate, assuming the aforementioned theorem, one does obtain a curious expression for $A_{(c_1,\dots,c_{n-1})}$ in terms of other mixed Eulerian numbers with weights coming from certain chains in the Bruhat order. We omit the details.
\end{remark}

Let us also mention that the results of this section have analogues in other types, see Section~\ref{sec:further_remarks}.

\subsection{Indecomposable permutations and sum rules}
\label{sub:indecomposable}

In this section we establish two summatory properties of the numbers $a_w$, based on the notion of factorization of a permutation into indecomposables, which we now recall.\smallskip

Let $w_1,w_2 \in \sgrp_m\times \sgrp_p$ with $m,p>0$. The concatenation $w=w_1\times w_2\in S_{m+p}$ is defined by $w(i)=w_1(i)$ for $1\leq i\leq m$ and $w(m+i)=m+w_2(i)$ for $1\leq i\leq p$. This is an associative operation, sometimes denoted by $\oplus$ and referred to as connected sum.
A permutation $w\in \sgrp_n$ is called \emph{indecomposable} if it cannot be written as $w=w_1\times w_2$ for any $w_1,w_2 \in \sgrp_m\times \sgrp_p$ with $n=m+p$.
Note that the unique permutation of $1\in\sgrp_1$ is indecomposable.
The indecomposable permutations for $n\leq 3$ are  $1,21,231,312,321$, and their counting sequence is A003319 in \cite{oeis}.
Permutations can be clearly uniquely factorized  into indecomposables: given $w$ in $\sgrp_n$, it has a unique factorization
\begin{equation}
\label{eq:indecomposables}
w=w_1\times w_2\times\cdots \times w_k,
\end{equation}
where each $w_i$ is an indecomposable permutation in $\sgrp_{m_i}$ for certain $m_i>0$.
For instance $w=53124768\in \sgrp_8$ is uniquely factorized as $w=53124 \times 21 \times 1$.
We say that $w$ is {\em quasiindecomposable} if exactly one $w_i$ is different from $1$. Thus a quasiindecomposable permutation has the form $1^{i}\times u\times1^{j}$ for $u$ indecomposable $\neq 1$ and integers $i,j\geq 0$.

Given $w\in \sgrp_n$ decomposed as \eqref{eq:indecomposables}, its cyclic shifts $w^{(1)},\ldots,w^{(k)}$ are given by
\begin{equation}
\label{eq:cyclic_shifts}
w^{(i)}=(w_i\times w_{i+1} \cdots \times w_k) \times (w_1\times \cdots \times w_{i-1}).
\end{equation}
The cyclic shifts of $w=53124768$, decomposed above, are $w^{(1)}=w=53124768$, $w^{(2)}=21386457$ and $w^{(3)}=16423587$.

These notions are very natural in terms of reduced words: Let the \emph{support} of $w\in \sgrp_n$ be the set of letters in $[n-1]$ that occur in any reduced word for $w$. Then $w$ is indecomposable if and only if it has full support $[n-1]$. It  is quasiindecomposable if its support is an interval in $\mathbb{Z}_{>0}$.
Finally, the number $k$ of cyclic shifts of $w$ is equal to $n$ minus the cardinality of the support of $w$.

\begin{theorem}[Cyclic Sum Rule]
\label{thm:cyclic_sum}
Let $w\in \sgrpp_n$, and consider its cyclic shifts $w^{(1)},\ldots,w^{(k)}$ defined by~\eqref{eq:indecomposables} and~\eqref{eq:cyclic_shifts}. We have that
\begin{equation}
\label{eq:cyclic_sum}
\sum_{i=1}^k a_{w^{(i)}}= |\reduced(w)|.
\end{equation}
\end{theorem}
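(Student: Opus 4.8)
The plan is to combine the positive formula of Theorem~\ref{thm:positive_formula} with the cyclic sum rule for mixed Eulerian numbers (Proposition~\ref{prop:mixed_cyclic}). Expanding each $a_{w^{(\ell)}}$ via Theorem~\ref{thm:positive_formula} and summing,
\[
\sum_{\ell=1}^{k} a_{w^{(\ell)}} = \sum_{\ell=1}^{k}\sum_{\mathbf{j}\in\reduced(w^{(\ell)})}\frac{A_{c(\mathbf{j})}}{(n-1)!}.
\]
The goal is to reorganize the right-hand side so that, for each single reduced word of $w$, the contributions coming from the $k$ cyclic shifts assemble into a full cyclic class of a composition in $\wcp{n}$, which sums to $1$ by Proposition~\ref{prop:mixed_cyclic}. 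Summing $1$ over $\reduced(w)$ then yields $|\reduced(w)|$.

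The crux is a combinatorial lemma relating the reduced words of $w$ to those of its cyclic shifts. First I would record the block structure through supports: if $w=w_1\times\cdots\times w_k$ with $w_j\in\sgrp_{m_j}$ indecomposable and $b_j=m_1+\cdots+m_j$, then $w_j$ uses precisely the generators $\{b_{j-1}+1,\ldots,b_j-1\}$, so the support of $w$ is $[n-1]\setminus\{b_1,\ldots,b_{k-1}\}$, of size $n-k$. Since every reduced word of $w$ uses exactly the generators in the support, the composition $c(\mathbf{i})\in\wcp{n}$ (with $c_n=0$) has its positive entries precisely on the support, hence exactly $k$ vanishing coordinates, located at $b_1,\ldots,b_{k-1},n$. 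Because a reduced word of a $\times$-product is a shuffle of reduced words of the factors, permuting the blocks cyclically and re-indexing the generators gives a bijection $\reduced(w)\to\reduced(w^{(\ell)})$, $\mathbf{i}\mapsto\mathbf{i}^{(\ell)}$. Tracking how each block's generators are shifted shows that, modulo $n$, this bijection rotates the composition left by $b_{\ell-1}$; that is, $c(\mathbf{i}^{(\ell)})$ is the cyclic rotation of $c(\mathbf{i})$ carrying position $b_{\ell-1}$ to position $n$ (with $\ell=1$ giving the identity).

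With the lemma in hand the proof concludes as follows. For fixed $\mathbf{i}\in\reduced(w)$, as $\ell$ ranges over $1,\ldots,k$ the compositions $c(\mathbf{i}^{(\ell)})$ run over exactly the $k$ cyclic rotations of $c(\mathbf{i})$ whose last coordinate is $0$, one for each of the $k$ zeros. Every remaining rotation $c'\in\mathsf{Cyc}(c(\mathbf{i}))$ satisfies $c'_n>0$, whence $A_{c'}=0$ by Lemma~\ref{lem:uniqueness_via_recurrence_petrov}(1); moreover all $n$ rotations are distinct, since $\gcd(n,n-1)=1$ forbids any nontrivial cyclic symmetry of a composition of $n-1$. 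Therefore
\[
\sum_{\ell=1}^{k}\frac{A_{c(\mathbf{i}^{(\ell)})}}{(n-1)!}=\sum_{c'\in\mathsf{Cyc}(c(\mathbf{i}))}\frac{A_{c'}}{(n-1)!}=1
\]
by Proposition~\ref{prop:mixed_cyclic}. Reindexing the double sum by $\mathbf{j}=\mathbf{i}^{(\ell)}$ and applying this identity for each $\mathbf{i}$ gives $\sum_{\ell}a_{w^{(\ell)}}=\sum_{\mathbf{i}\in\reduced(w)}1=|\reduced(w)|$. \textbf{The main obstacle} is the bookkeeping in the lemma: checking that the shuffle/re-indexing map lands in $\reduced(w^{(\ell)})$ and genuinely realizes a single global cyclic rotation of $c(\mathbf{i})$, in particular that only gap-zeros wrap past position $n$, so no positive entry is ever sent to an illegal coordinate. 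The instance $w=32415=3241\times 1$, whose reduced words $1231,1213,2123$ give compositions $(2,1,1,0,0),(2,1,1,0,0),(1,2,1,0,0)$ while their left-rotations by $m_1=4$ are the compositions attached to $w^{(2)}=14352$, serves as a guiding sanity check.
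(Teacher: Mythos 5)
Your proposal is correct and follows essentially the same route as the paper: expand each $a_{w^{(\ell)}}$ by Theorem~\ref{thm:positive_formula}, transport reduced words of $w$ to reduced words of the cyclic shifts by a letter re-indexing that rotates $c(\mathbf{i})$ cyclically (the paper phrases this as $\mathbf{i}\mapsto\mathbf{i}[t]$ with letters shifted by $t_j=m_1+\cdots+m_{j-1}$ modulo $n$, which is the same map as your shuffle/block-rotation description), observe that these hit exactly the $k$ rotations of $c(\mathbf{i})$ with vanishing last coordinate while the remaining rotations contribute $0$, and conclude with Proposition~\ref{prop:mixed_cyclic}. The bookkeeping you flag as the main obstacle is handled in the paper exactly as you anticipate, so there is no gap.
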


\begin{proof}
Let $\mathbf{i}=i_1\cdots i_{n-1}$ be a reduced word for $w=w^{(1)}$. Consider the words $\mathbf{i}[t]=(i_1+t)\cdots (i_{n-1}+t)$ for $t=0,\ldots,n-1$, where the values $i_j+t$ are considered as their residues modulo $n$ with representatives belonging to the interval $\{1,\ldots,n\}$. Let $0=t_1<\cdots<t_{k}$ be the values of $t$ for which $n$ does \emph{not} occur in $\mathbf{i}[t]$. Then in the notation of \eqref{eq:cyclic_shifts}, we have $t_j=\sum_{i=1}^{j-1} m_i$. Moreover, $\mathbf{i}\mapsto \mathbf{i}[t_j]$ is a bijection between $\reduced(w)$ and $\reduced(w^{(j)})$ for any $j$.

Fix $\mathbf{i}=i_1\cdots i_{n-1}\in \reduced(w)$, and let $c=(c_1,\ldots,c_n)\in \wcp{n}$ where $c_i$ is the number of occurrences of $i$ in $\mathbf{i}$.  For the reduced word $\mathbf{i}[t_j]$, the corresponding vector is given by the cyclic shift $c[j]=(c_{t_j+1},\ldots,c_n,c_1,\ldots,c_{t_j})$. By the definition of the indices $t_j$, the $c[j]$ are exactly the cyclic shifts of $c$ that have a nonzero last coordinate. Proposition~\ref{prop:mixed_cyclic} now gives
\[\sum_{j=1}^{k}\frac{A_{c[j]}}{(n-1)!}=1. \]
If we sum the previous identity over all reduced words of $w$, then we obtain \eqref{eq:cyclic_sum} by applying Theorem~\ref{thm:positive_formula} to each term of the previous sum, 
\end{proof}

\begin{example}
Let $w=53124768\in \sgrpp_8$ already considered earlier.
Then one has $|\reduced(w)|=63$ while $a_{w^{(1)}}+a_{w^{(2)}}+a_{w^{(3)}}=6+21+36=63.$
\end{example}

We now present a refined property of the numbers $a_w$ when $w$ is quasiindecomposable, giving a simple way to compute them in terms of principal specializations of Schubert polynomials. Given a permutation $u$ of length $\ell$ and $m\geq 0$, consider
\begin{equation}
\label{eq:nu_m}
\nu_u(m)\coloneqq \nu_{1^m\times u}= \schub_{1^m\times u}(1,1,\ldots).
\end{equation}

By Macdonald's identity \eqref{eq:macdonald} we have
\begin{equation}
\label{eq:macdonald_poly}
\nu_{u}(m)=\frac{1}{\ell!}\sum_{\mathbf{i}\in \reduced(u)}(i_1+m)(i_2+m)\cdots (i_{\ell}+m),
\end{equation}
which is a \emph{polynomial in $m$} of degree $\ell$. Therefore (see \cite{St97} for instance) there exist integers $h_m^u\in \bZ$ for $m=0,\dots,\ell$ such that
 \begin{equation}
\label{eq:defi_h_m}
\sum_{j\geq 0}\nu_u(j)t^j=\frac{\sum_{m=0}^\ell h_m^ut^m}{(1-t)^{\ell+1}}.
\end{equation}
Moreover, the numbers $h_m^u$ are known to sum to $\ell!$ times the leading term of $\nu_{u}(m)$, that is $\sum_{m=0}^\ell h_m^u=|\reduced(u)|$. Thus the following theorem is a refinement of Theorem~\ref{thm:cyclic_sum} in the case of quasiindecomposable permutations.

\begin{theorem}
\label{thm:quasiindecomposable}
Assume that $u\in\sgrp_{p+1}$ is \emph{indecomposable} of length $n-1$.
Define quasiindecomposable permutations $u^{[m]}\in\sgrpp_n$ for $m=0,\ldots,n-p-1$ by $u^{[m]}\coloneqq 1^{m}\times u\times1^{n-p-1-m}$.
Then \begin{align*}
\displaystyle{h_m^{u}=\begin{cases} a_{u^{[m]}}\text{ if }m< n-p;\\ 0\text{ if }m\geq n-p\end{cases}}
\end{align*}
Equivalently, one has
\begin{equation}
\label{eq:summation_indecomposable}
\sum_{j\geq 0}\nu_u(j)t^j=\frac{\sum_{m=0}^{n-p-1} a_{u^{[m]}}t^m}{(1-t)^{n}}.
\end{equation}
\end{theorem}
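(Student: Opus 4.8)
The plan is to reduce the identity \eqref{eq:summation_indecomposable} to Proposition~\ref{prop:mixed_connected} via the positive formula of Theorem~\ref{thm:positive_formula}, and then to deduce the statement about the $h_m^u$ by comparing numerators. The first step is to describe $\reduced(u^{[m]})$. Since the connected-sum embedding $\sgrp_{p+1}\hookrightarrow \sgrp_n$ given by $u\mapsto 1^m\times u\times 1^{n-p-1-m}$ sends the generator $s_i$ to $s_{i+m}$ and introduces no new letters, every reduced word of $u^{[m]}$ arises from a reduced word $\mathbf{i}=i_1\cdots i_{n-1}\in\reduced(u)$ by the shift $\mathbf{i}\mapsto(i_1+m)\cdots(i_{n-1}+m)$. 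Crucially, because $u$ is indecomposable in $\sgrp_{p+1}$ it has full support $[p]$, so each $\mathbf{i}$ uses every letter $1,\dots,p$ at least once; hence $c(\mathbf{i})=(a_1,\dots,a_p)$ is a genuine strong composition of $n-1$ into exactly $p$ positive parts, and the composition attached to its $m$-shift is $0^m\,c(\mathbf{i})\,0^{n-1-m-p}$ in the convention of Theorem~\ref{thm:positive_formula}.

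With this in hand, Theorem~\ref{thm:positive_formula} gives $a_{u^{[m]}}=\frac{1}{(n-1)!}\sum_{\mathbf{i}\in\reduced(u)}A_{0^m c(\mathbf{i})0^{n-1-m-p}}$. Summing against $t^m$ and recognizing the inner generating polynomial as $\widetilde{A}_{c(\mathbf{i})}(t)$, I would apply Proposition~\ref{prop:mixed_connected} to each term to obtain
\begin{equation*}
\sum_{m=0}^{n-p-1}a_{u^{[m]}}\,t^m=\frac{1}{(n-1)!}\sum_{\mathbf{i}\in\reduced(u)}\widetilde{A}_{c(\mathbf{i})}(t)=\frac{(1-t)^n}{(n-1)!}\sum_{\mathbf{i}\in\reduced(u)}\sum_{j\geq 0}\Big(\prod_{k=1}^{p}(k+j)^{a_k}\Big)t^j,
\end{equation*}
where $(a_1,\dots,a_p)=c(\mathbf{i})$.

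The final step identifies the inner sum with $\nu_u(j)$. For a fixed $\mathbf{i}$, grouping the factors of $\prod_{r=1}^{n-1}(i_r+j)$ according to the value of the letter $i_r$ yields exactly $\prod_{k=1}^{p}(k+j)^{a_k}$, since the letter $k$ occurs $a_k$ times. Hence $\sum_{\mathbf{i}\in\reduced(u)}\prod_{k=1}^{p}(k+j)^{a_k}=\sum_{\mathbf{i}\in\reduced(u)}\prod_{r=1}^{n-1}(i_r+j)=(n-1)!\,\nu_u(j)$ by Macdonald's identity \eqref{eq:macdonald_poly}. Substituting and dividing by $(1-t)^n$ gives precisely \eqref{eq:summation_indecomposable}. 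Finally, the numerator in \eqref{eq:defi_h_m} is uniquely determined as the polynomial $(1-t)^{n}\sum_{j\geq 0}\nu_u(j)t^j$ of degree at most $\ell=n-1$; comparing it with the numerator just computed forces $h_m^u=a_{u^{[m]}}$ for $m\leq n-p-1$ and $h_m^u=0$ for $m\geq n-p$, which is the first form of the theorem.

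The step I would flag as the crux is the passage through Proposition~\ref{prop:mixed_connected}, which applies only to connected compositions; what makes it applicable is exactly the indecomposability of $u$, guaranteeing that every $c(\mathbf{i})$ is supported on the full interval $[p]$ with all parts strictly positive. Everything else is bookkeeping: tracking how the shift $1^m\times(-)$ acts on reduced words and their associated compositions, and the clean regrouping of the Macdonald product by letter value. In particular no generalized Littlewood–Richardson input is needed, which is what renders the argument elementary once Theorem~\ref{thm:positive_formula}, Proposition~\ref{prop:mixed_connected}, and Macdonald's identity are in place.
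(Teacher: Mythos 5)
Your proposal is correct and follows essentially the same route as the paper's proof: the shift bijection $\mathbf{i}\mapsto(i_1+m)\cdots(i_{n-1}+m)$ on reduced words, the observation that indecomposability gives $c(\mathbf{i})$ full support $[p]$ so that Proposition~\ref{prop:mixed_connected} applies, and the regrouping of Macdonald's product by letter value to recover $\nu_u(j)$. The only difference is cosmetic ordering — you sum the $a_{u^{[m]}}t^m$ first and then invoke the proposition termwise, whereas the paper applies the proposition to each fixed $\mathbf{i}$ and then sums — so there is nothing substantive to add.
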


\begin{proof}
The map $\rho_m:i_1\cdots i_{n-1}\mapsto (i_1+m)\cdots (i_{n-1}+m)$ is a bijection between $\reduced(u)$ and $\reduced(u^{[m]})$ for $m=0,\ldots,n-p-1$.

Fix $\mathbf{i}=i_1\cdots i_{n-1}\in \reduced(u)$. Since $u$ is indecomposable, it has full support, so that $c(\mathbf{i})$ has the form $(a_1,\ldots,a_{p},0,0,\ldots)$ where ${\bf a}=(a_1,\ldots,a_{p})\vDash n-1$. Then $0^m{\bf a}$ is equal to $c(\rho_m(\mathbf{i}))$ for $m=0,\ldots,n-p-1$. We can apply Proposition~\ref{prop:mixed_connected} to ${\bf a}$, and we get:
\[
	\sum_{j\geq 0}(1+j)^{a_1}(2+j)^{a_2}\cdots (p+j)^{a_p}t^j =\frac{\sum_{m=0}^{n-p-1}A_{c(\rho_m(\mathbf{i}))} t^m}{(1-t)^{n}}.
	\]

We now sum this last identity over all $\mathbf{i}\in \reduced(u)$. On the left hand side, for a fixed $j$, the coefficients sum to $(n-1)!\nu_u(j)$ by Macdonald's identity~\eqref{eq:macdonald}. On the right hand side, for a fixed $m$ the coefficients $A_{c(\rho_m(\mathbf{i}))}$ sum to $(n-1)!a_{u^{[m]}}$ by Theorem~\ref{thm:positive_formula}.  This completes the proof of \eqref{eq:summation_indecomposable}.
\end{proof}

\begin{example}
	Consider $n=7$ and $u=4321\in S_4$ an indecomposable permutation.
	We have that $u^{[0]}=4321567$, $u^{[1]}=1543267$, $u^{[2]}=1265437$, and $u^{[3]}=1237654$.
	It is easily checked that
	\[
		\sum_{j\geq 0}\nu_u(j)t^j=\frac{1+7t+7t^2+t^3}{(1-t)^7}
	\]
	Take particular note of the fact that coefficients in the numerator on the right hand side are all positive, which is a priori not immediate.
	Theorem~\ref{thm:quasiindecomposable} then tells us that $a_{u^{[0]}}=1$, $a_{u^{[1]}}=7$, $a_{u^{[2]}}=7$, and $a_{u^{[3]}}=1$. Section~\ref{sec:vexillary} offers a complete explanation for why these numbers arise.
\end{example}

Observe that by extracting coefficients, Theorem~\ref{thm:quasiindecomposable} gives a \emph{signed} formula for $a_{w}$ for any quasiindecomposable $w$ in terms of principal specializations of shifted Schubert polynomials: for any $u\in \sgrp_{p+1}$ indecomposable of length $n-1$, and $m=0,\ldots,n-p-1$, we have that
\begin{equation}
\label{eq:a_from_nu}
a_{u^{[m]}}=\sum_{j=0}^n\nu_u(j)(-1)^{m-j}\binom{n}{m-j}.
\end{equation}

A last observation is that the stability properties from Proposition~\ref{prop:stability} are nicely reflected in Theorem~\ref{thm:quasiindecomposable}. The fact that $a_w=a_{w^{-1}}$ for any $w$ quasiindecomposable is immediate since $\nu_u(j)=\nu_{u^{-1}}(j)$ for any $j$ by \eqref{eq:macdonald_poly}, so that the right hand side of \eqref{thm:quasiindecomposable} for $u$ and $u^{-1}$ coincide.

The stability under $w_o$-conjugation is more interesting: let $\bar{u}=w_o^{p+1}uw_o^{p+1}$ where $w_0^{p+1}$ denotes the longest word in $\sgrp_{p+1}$. Using  \cite[4.2.3]{St97}) we deduce from \eqref{eq:summation_indecomposable} that
\[
\sum_{j\geq 1}\nu_{{u}}(-j)t^j=(-1)^{n-1}\frac{\sum_{m=0}^{n-p-1} a_{u^{[m]}}t^{n-m}}{(1-t)^{n}}
\]

Now $\nu_u(-i)=0$ for $i=1,\ldots,p$ since $u$ has full support, so, using the change of variables $j\mapsto j-p-1$, we can rewrite the previous equation as
\[
\sum_{j\geq 0}\nu_{{u}}(-j-p-1)t^j=(-1)^{n-1}\frac{\sum_{m=0}^{n-p-1} a_{u^{[m]}}t^{n-m-p-1}}{(1-t)^{n}}
\]

We also have $\nu_{\bar{u}}(j)=(-1)^{n-1}\nu_u(-j-p-1)$ easily from \eqref{eq:macdonald_poly}. Putting these together, we get $a_{{\bar{u}}^{[m]}}=a_{{u}^{[n-1-p-m]}}$ for any $m\leq n-p-1$. This is equivalent to
the fact that $a_w=a_{w_oww_o}$ for any $w\in\sgrpp_n$ quasiindecomposable.

\section[Combinatorial interpretation]{Combinatorial interpretation of \texorpdfstring{$a_w$}{} in special cases}
\label{sec:combinatorial_interpretation}

We identify certain special classes of permutations for which we have a combinatorial interpretation. Assume $n\geq 2$ throughout this section.

\subsection{\L{}ukasiewicz permutations}
\label{sub:lukasiewicz}

\begin{definition}
\label{defi:Lukasiewicz_permutations}
A weak composition $(c_1,\ldots,c_n)\in\wcp{n}$ is called \emph{\L{}ukasiewicz} if it satisfies $c_1+\cdots+c_k \geq k$ for any $k\in\{1,\cdots,n-1\}$.

A permutation $w\in \sgrpp_n$ is {\em \L{}ukasiewicz} if $\code{w}$ is a \L{}ukasiewicz composition.
\end{definition}

We note that $c_1+\cdots+c_n=n-1$ since $c$ is assumed to be in $\wcp{n}$, so that the inequality in Definition~\ref{defi:Lukasiewicz_permutations} fails for $k=n$.
Let $\Luk_n$ be the set of \L{}ukasiewicz permutations and $\Lukwc{n}$ the set of \L{}ukasiewicz compositions.
If $Y=\{y_0,y_1,\ldots\}$ is an alphabet, then the words $y_{c_1}y_{c_2}\cdots y_{c_n}$ for $c\in \Lukwc{n}$ are known as \emph{\L{}ukasiewicz words} in $Y$ \cite{Lothaire}.
These are known to be counted by Catalan numbers $Cat_{n-1}=\frac{1}{n}\binom{2n-2}{n-1}$.

\begin{example}
There are $5$ compositions in $\Lukwc{4}$:  \[(3,0,0,0),(2,1,0,0),(2,0,1,0),(1,2,0,0),(1,1,1,0)\] corresponding to the \L{}ukasiewicz permutations $4123,3214,3142,2413,2341$.
\end{example}

\begin{proposition}
\label{prop:Lukasiewicz_permutations_enumeration}
For $n\geq 1$, we have $|\Luk_n|=|\Lukwc{n}|=Cat_{n-1}$.
\end{proposition}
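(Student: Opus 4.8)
The plan is to split the statement into two independent parts: the equality $|\Luk_n|=|\Lukwc{n}|$, which is a consequence of the code bijection, and the equality $|\Lukwc{n}|=Cat_{n-1}$, which I would obtain by translating \L{}ukasiewicz compositions into \L{}ukasiewicz paths and counting the latter.

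For the first equality, I would start from the fact that $w\mapsto\code{w}$ is a bijection from $\sgrp_n$ onto $C_n=\{(c_1,\ldots,c_n)\suchthat 0\leq c_i\leq n-i\}$, and that $w\in\sgrpp_n$ is equivalent to $\sum_i c_i=\ell(w)=n-1$. Hence the code map restricts to a bijection from $\sgrpp_n$ onto $C_n\cap\wcp{n}$, carrying $\Luk_n$ onto $\Lukwc{n}\cap C_n$ by definition. So it suffices to verify the containment $\Lukwc{n}\subseteq C_n$, i.e. that every \L{}ukasiewicz composition automatically satisfies $c_i\leq n-i$. This is the one computational point: for $2\leq i\leq n$ the defining inequality with $k=i-1$ gives $c_1+\cdots+c_{i-1}\geq i-1$, so
\[
c_i=(n-1)-\sum_{j\neq i}c_j\leq (n-1)-\sum_{j=1}^{i-1}c_j\leq (n-1)-(i-1)=n-i,
\]
while $c_1\leq n-1$ is immediate from $\sum_j c_j=n-1$; the case $k=n-1$ moreover forces $c_n=0$. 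Thus $\Lukwc{n}\cap C_n=\Lukwc{n}$ and $|\Luk_n|=|\Lukwc{n}|$.

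For the second equality I would pass to the shifted sequence $a_i\coloneqq c_i-1$. Under $c\mapsto(a_1,\ldots,a_n)$, the set $\Lukwc{n}$ corresponds exactly to integer sequences with $a_i\geq -1$, all proper partial sums $a_1+\cdots+a_k\geq 0$ for $k\leq n-1$, and total sum $-1$ — that is, to \L{}ukasiewicz paths of length $n$. To count these I would invoke the Dvoretzky--Motzkin cycle lemma: among the $n$ cyclic rotations of any sequence with entries $\geq -1$ summing to $-1$, exactly one has all proper partial sums nonnegative. Since a sequence fixed by a nontrivial rotation would be periodic, with total sum divisible by some $d>1$, which is impossible as the sum is $-1$, the cyclic group acts freely; every orbit therefore has size $n$ and contains precisely one \L{}ukasiewicz path. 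The ambient set of all weak compositions of $n-1$ into $n$ parts has cardinality $\binom{2n-2}{n-1}$ by stars and bars, so the number of orbits, hence of \L{}ukasiewicz paths, is $\frac{1}{n}\binom{2n-2}{n-1}=Cat_{n-1}$.

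I expect no serious obstacle here: the content is essentially bookkeeping, and the only step needing genuine care is the containment $\Lukwc{n}\subseteq C_n$, which is exactly what makes the two cardinalities agree on the nose rather than merely giving an injection. An alternative to the cycle-lemma count is to cite directly the classical enumeration of \L{}ukasiewicz words by Catalan numbers \cite{Lothaire}, as already signalled in the text preceding the proposition; I would nevertheless favor the cycle-lemma argument, since it is self-contained and at the same time foreshadows the cyclic structure that reappears in Theorem~\ref{thm:cyclic_sum}.
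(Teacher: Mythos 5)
Your proof is correct, and its essential content coincides with the paper's: the one substantive step in both is the verification that every $c\in\Lukwc{n}$ satisfies $c_i\leq n-i$ (the paper bounds $c_i\leq c_i+\cdots+c_n=n-1-(c_1+\cdots+c_{i-1})\leq n-i$, which is the same inequality you derive), after which the code map identifies $\Luk_n$ with $\Lukwc{n}$. The only divergence is in the Catalan count: the paper simply cites the classical enumeration of \L{}ukasiewicz words from \cite{Lothaire}, whereas you reprove it via the Dvoretzky--Motzkin cycle lemma. Your argument there is sound (the free action follows from the sum $-1$ being coprime to any proposed period, and stars-and-bars gives $\binom{2n-2}{n-1}$ for the ambient set), and it has the incidental merit of matching the technique the paper itself deploys later in Proposition~\ref{prop:distinct_Lukasiewicz}; but it buys nothing that the citation does not already provide for this particular statement.
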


\begin{proof}
We have already argued above that $|\Lukwc{n}|=Cat_{n-1}$. If $c\in\Lukwc{n}$ then $c_i\leq n-i$ for all $i$ since  \[c_i\leq c_i+\ldots+c_n=n-1-(c_1+\cdots+c_{i-1})\leq n-1-(i-1)=n-i.\]
 It follows that the code is a bijection from $\Luk_n$ to $\Lukwc{n}$.
\end{proof}

Our next proposition states that the set of \L{}ukasiewicz permutations is stable under taking inverses.
\begin{proposition}
\label{prop:Lukasiewicz_permutations_inverse}
 If $w\in \Luk_n$ then $w^{-1}\in \Luk_n$.
\end{proposition}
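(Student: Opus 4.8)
The plan is to recast the Łukasiewicz condition in terms of inversions of subwords and then run an induction that peels off the first letter of $w$.

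First I would reformulate. For $1\le a\le b\le n$ write $\mathrm{inv}(w(a)\cdots w(b))$ for the number of inversions of the subword occupying positions $a,\dots,b$. Since $\sum_{i=1}^n c_i=\ell(w)=n-1$, the defining inequalities $\sum_{i=1}^{k}c_i\ge k$ are equivalent to $\sum_{i>k}c_i\le n-1-k$, and $\sum_{i>k}c_i=|\{(i,j): k<i<j,\ w(i)>w(j)\}|$ is exactly the number of inversions lying in positions $\{k+1,\dots,n\}$. Thus, setting $m=k+1$,
\[
w\in\Luk_n\iff \ell(w)=n-1\ \text{and}\ \mathrm{inv}(w(m)\cdots w(n))\le n-m\ \text{for all }1\le m\le n.
\]
Call this condition $(\mathrm P)$. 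Replacing $w$ by $w^{-1}$ interchanges the roles of positions and values (the inversions of $w^{-1}$ among positions $\{k+1,\dots,n\}$ are exactly the inversions of $w$ among the \emph{values} $\{k+1,\dots,n\}$), so $w^{-1}\in\Luk_n$ is equivalent to the condition $(\mathrm Q)$ that $\ell(w)=n-1$ and, for every $k$, the values $\{k+1,\dots,n\}$ read in the order they occur in $w$ have at most $n-1-k$ inversions among themselves. As length is inverse-invariant, it suffices to prove $(\mathrm P)\Rightarrow(\mathrm Q)$.

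I would prove this implication by induction on $n$ (the cases $n\le 2$ being trivial), removing the first letter. Let $a=w(1)$ and let $w'\in\sgrp_{n-1}$ be the flattening of $w(2)\cdots w(n)$. The suffixes of $w'$ are precisely the suffixes $w(m)\cdots w(n)$ of $w$ with $m\ge 2$, with identical inversion counts; in particular the $m=2$ instance of $(\mathrm P)$ gives $\ell(w')\le n-2$, so $w'$ satisfies $(\mathrm P)$ at size $n-1$ and hence, by the inductive hypothesis, satisfies $(\mathrm Q)$. Now fix a threshold $k$ and bound the inversions of $w$ among $V=\{k+1,\dots,n\}$. If $a\le k$, then $V$ avoids position $1$ and corresponds under flattening to the top $n-k$ values of $w'$, so the desired bound $n-1-k$ follows immediately from $(\mathrm Q)$ for $w'$.

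The delicate case is $a>k$, when the large value $a=w(1)$ sits at the very front; this is the main obstacle, since here the inductive hypothesis alone is too weak. I would resolve it using the global constraint $\ell(w)=n-1$. Split off $a$: its inversions inside $V$ are exactly those with the $a-k-1$ values of $V$ smaller than $a$, all lying to its right. The remaining inversions of $V$ occur among positions $\ge 2$, so they number at most $\mathrm{inv}(w(2)\cdots w(n))=\ell(w)-(a-1)\le (n-1)-(a-1)=n-a$, the first equality holding because the $a-1$ values below $a$ all follow position $1$. Adding the two contributions gives at most $(n-a)+(a-k-1)=n-1-k$, exactly the required bound; the point is that a heavy value at the front forces the tail $w(2)\cdots w(n)$ to be nearly sorted. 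Assembling the two cases completes the induction and hence the proposition.
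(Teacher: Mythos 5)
Your argument is correct, and it is genuinely different from the paper's. The paper proves the statement by introducing the antidiagonal weight $\bar{a}(w)$ of the bottom pipe dream, showing that $\code{w}\in\Lukwc{n}$ if and only if $\bar{a}(w)\in\Lukwc{n}$ (Proposition~\ref{prop:alternative_characterization_Lukasiewicz}), and then importing from Weigandt's work the fact that $\bar{a}(w)=\bar{a}(w^{-1})$, which reflects transposition of pipe dreams. You instead rewrite the \L{}ukasiewicz condition as a bound on the inversions of every suffix $w(m)\cdots w(n)$, observe that passing to $w^{-1}$ turns position-suffixes into value-suffixes, and prove the implication by a self-contained induction that peels off $w(1)$; the key estimate in the case $a>k$, namely $\mathrm{inv}(w(2)\cdots w(n))=\ell(w)-(a-1)\le n-a$, is exactly right. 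What the paper's route buys is a statistic $\bar a$ that is manifestly inverse-invariant and reusable elsewhere (it also characterizes Coxeter elements in Section~\ref{sec:combinatorial_interpretation}); what yours buys is elementarity, with no appeal to pipe dreams or external lemmas.

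One point to tighten: as literally stated, your condition $(\mathrm P)$ includes $\ell(w)=n-1$, but the flattened word $w'$ has $\ell(w')=n-w(1)$, which is strictly less than $n-2$ whenever $w(1)>2$, so $w'$ does \emph{not} satisfy $(\mathrm P)$ at size $n-1$ as you defined it. This is only a formulation slip, not a mathematical one: your induction never uses the exact length, only the inequalities $\mathrm{inv}(v(m)\cdots v(N))\le N-m$ (the $m=1$ case being $\ell(v)\le N-1$), and the relaxed implication ``suffix bounds imply value-suffix bounds'' does go through verbatim by your two-case analysis. You should therefore state the inductive claim with inequalities only and recover the proposition by specializing to $\ell(w)=n-1$ at the end.
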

This claim is a priori not clear from the definition, because determining $\mathrm{code}(w^{-1})$ from $\mathrm{code}(w)$ is a convoluted process.
We give a proof based on an alternative characterization of $\Luk_n$ in the appendix.

\subsection[Computation for Lukasiewicz permutations]{Computation of $a_w$ for \L{}ukasiewicz permutations}
We recall Postnikov's result \cite{Pos09} (see also \cite{DS,Pet18}) for the evaluation of divided symmetrization on monomials.
Let $c=(c_1,\ldots,c_n)\in \wcp{n}$.
Define the subset $S_{c}\subseteq{[n-1]}$ by $S_{c}\coloneqq \{k\in[n-1]\suchthat\sum_{i=1}^k c_i<k\}$.
Then
\begin{equation}
\label{eq:monomial_evaluation}
\ds{x_1^{c_1}\cdots x_n^{c_n}}{n}=(-1)^{|S_c|}\beta_n(S_c),
\end{equation}

Here $\beta_n(S)$ is the number of permutations in $\sgrp_n$ with descent set $S$ as defined in Section~\ref{sub:permutation_statistics}.
Recall that we have $a_w=\ds{\schub_w}{n}$, see \eqref{eq:aw_as_ds}, so that by applying~\eqref{eq:monomial_evaluation} to each monomial in the pipe dream expansion \eqref{eq:pipe_dreams} of $\schub_w$, we obtain the formula:
\begin{equation}
\label{eqn:signed_sum_expansion}
 a_w
 =\sum_{\gamma\in \pipe(w)}(-1)^{|S_{c(\gamma)}|}\beta_n(S_{c(\gamma)}).
\end{equation}
In general, this signed sum seems hard to analyze and simplify, and positivity is far from obvious. The nice case where this approach works corresponds precisely to $w\in \Luk_n$.

\begin{theorem}
\label{thm:Lukasiewicz_permutations}
 If $w\in \Luk_n$, then $a_w=|\pipe(w)|$.
\end{theorem}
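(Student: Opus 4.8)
The plan is to show that for a \L{}ukasiewicz permutation $w$, every pipe dream $\gamma\in\pipe(w)$ contributes exactly $+1$ to the signed sum in \eqref{eqn:signed_sum_expansion}, so that the sum collapses to the cardinality $|\pipe(w)|$. Recall that each term is $(-1)^{|S_{c(\gamma)}|}\beta_n(S_{c(\gamma)})$, where $S_{c(\gamma)}=\{k\in[n-1]\suchthat \sum_{i=1}^k c_i(\gamma)<k\}$. The key claim I would isolate is that for $w\in\Luk_n$ and \emph{any} $\gamma\in\pipe(w)$, the set $S_{c(\gamma)}$ is empty, whence $(-1)^{|S_{c(\gamma)}|}\beta_n(\emptyset)=1$ for each pipe dream.

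To establish this claim, first I would verify it for the bottom pipe dream $\gamma_w$. Here $c(\gamma_w)=\code{w}$ by definition, and since $w\in\Luk_n$ means $\code{w}$ is a \L{}ukasiewicz composition, the defining inequalities $\sum_{i=1}^k c_i\geq k$ for all $k\in[n-1]$ say precisely that $S_{c(\gamma_w)}=\emptyset$. The second and crucial step is to propagate the \L{}ukasiewicz property through ladder moves: by Theorem~\ref{thm:ladder_moves}, every $\gamma\in\pipe(w)$ is obtained from $\gamma_w$ by a sequence of ladder moves, so it suffices to show that a single ladder move preserves the condition $\sum_{i=1}^k c_i\geq k$ for all $k\in[n-1]$. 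A ladder move takes a $+$ from some row $r+1$ and moves it up to row $r$ (adding $k$ vertical steps in the picture, but the net effect on the row-count vector $c(\gamma)$ is to decrease $c_{r+k+1}$ by one and increase $c_{r}$ by one, i.e.\ it moves a $+$ to a \emph{strictly earlier} row). Since a ladder move only transfers $+$'s to earlier rows, each partial sum $\sum_{i=1}^k c_i$ can only stay the same or increase, and hence the \L{}ukasiewicz inequalities are preserved.

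I would carry out the steps in that order: (1) record the monomial formula \eqref{eq:monomial_evaluation} and the signed-sum expansion \eqref{eqn:signed_sum_expansion}; (2) observe $S_{c(\gamma_w)}=\emptyset$ directly from the definition of $\Luk_n$; (3) prove the ladder-move monotonicity statement for partial sums; (4) invoke Theorem~\ref{thm:ladder_moves} to conclude $S_{c(\gamma)}=\emptyset$ for all $\gamma\in\pipe(w)$; (5) substitute into \eqref{eqn:signed_sum_expansion} using $\beta_n(\emptyset)=1$ to get $a_w=\sum_{\gamma\in\pipe(w)}1=|\pipe(w)|$. Finally, by \eqref{eq:nu_equals_PD} one may restate this as $a_w=\nu_w=\schub_w(1,\dots,1)$, matching the form in Theorem~\ref{thm:main_3}.

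The main obstacle I anticipate is step (3): pinning down the exact effect of a ladder move on the row-count vector $c(\gamma)$ and confirming it can only shift $+$'s to strictly earlier rows. The ladder move as drawn involves a parameter $k\geq 0$ and replaces a configuration of $+$'s spanning two columns and several rows, so one must carefully check that the bookkeeping of which row gains a $+$ and which loses one indeed yields a net transfer upward (to a smaller row index). Once the monotonicity of the partial sums $\sum_{i=1}^k c_i$ under a single ladder move is nailed down, the rest is a direct and clean assembly of the cited results.
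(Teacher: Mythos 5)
Your proposal is correct and follows essentially the same route as the paper's proof: verify the \L{}ukasiewicz condition for the bottom pipe dream via $c(\gamma_w)=\code{w}$, observe that a ladder move only transfers a $+$ to a strictly earlier row (so $c'_i=c_i+1$, $c'_j=c_j-1$ with $i<j$, preserving the partial-sum inequalities), invoke Theorem~\ref{thm:ladder_moves}, and conclude that every monomial contributes $1$ under divided symmetrization since $S_{c(\gamma)}=\emptyset$. The step you flag as the main obstacle is exactly the observation the paper makes in one line, and your bookkeeping of the ladder move is consistent with it.
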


\begin{proof}
 We examine the expansion \eqref{eq:pipe_dreams} into pipe dreams. If a pipe dream $\gamma$ has weight $(c_1,\ldots,c_n)$, then a ladder move transforms it into a pipe dream $\gamma'$ with weight $(c'_1,\ldots,c'_n)$ where $c'_i=c_{i}+1$, $c'_j=c_{j}-1$ for some $i<j$ while $c'_k=c_k$ for $k\neq i,j$.  In particular $(c_1,\ldots,c_n)\in \Lukwc{n}$ implies $(c'_1,\ldots,c'_n)\in \Lukwc{n}$.

 By definition  the bottom pipe dream $\gamma_{w}$ has weight $\code{w}$ for any $w$. Assume $w\in \Luk_n$ so that the weight of $\Lukwc{n}$ is in $\Lukwc{n}$. It then follows from Theorem~\ref{thm:ladder_moves} that all pipe dreams in the expansion \eqref{eq:pipe_dreams} have weight in $\Lukwc{n}$.

If $(c_1,\ldots,c_n)\in \Lukwc{n}$ then $S_c=\emptyset$ and so $\ds{x_1^{c_1}\cdots x_n^{c_n}}{n}=1$ because $\beta_n(S_c)$ contains only the identity of $\sgrp_n$. Putting things together, we have for any $w\in \Luk_n$,
\[a_w=\ds{\schub_w}{n}=\sum_{\gamma\in \pipe(w)} \ds{x^\gamma}{n}= |\pipe(w)|=\nu_w,\]
which concludes the proof.
\end{proof}

\begin{example}
Let $w=31524\in \Luk_5$ with code $(2,0,2,0,0)$. $\pipe(w)$ consists of $4$ elements, and thus by Theorem \ref{thm:Lukasiewicz_permutations} we get $a_w=4$.
\end{example}

The combinatorial interpretation $a_w=|\pipe(w)|$ shows $a_w>0$ since $\pipe(w)$ contains at least the bottom pipe dream.
By Proposition~\ref{prop:Lukasiewicz_permutations_inverse}, $\Luk_n$ is stable under inverses, and so the stability under taking inverses from Proposition~\ref{prop:stability} is equivalent in this case to $|\pipe(w)|=|\pipe(w^{-1})|$.
This follows combinatorially from the transposition of pipe dreams along the diagonal.

Note that $\Luk_n$ is not stable under conjugation by $w_o$: for instance, for the  permutation $3214$ in $\Luk_4$ we have $w^4_o(3214)w^4_o=1432\notin\Luk_4$. Thanks to Proposition~\ref{prop:stability}, we have
\begin{corollary}
\label{cor:Lukasiewicz_conjugates}
 $a_w=\nu_{w_oww_o}$ if $w_oww_o\in\Luk_n$.
 \end{corollary}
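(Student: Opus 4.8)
The plan is to obtain this as an immediate consequence of the two results already in hand, routed through the conjugate permutation rather than $w$ itself. First I would set $v\coloneqq w_oww_o$ and record the bookkeeping: the hypothesis places $v$ in $\Luk_n$, hence in $\sgrpp_n$, and since conjugation by the longest element $w_o$ preserves length, $w$ itself lies in $\sgrpp_n$. Thus $a_w$ is defined and the symmetry of Proposition~\ref{prop:stability} is available.

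Next I would invoke the $w_o$-conjugation invariance from Proposition~\ref{prop:stability}, which gives directly
\[
a_w=a_{w_oww_o}=a_v.
\]
Finally, because $v\in\Luk_n$ by hypothesis, Theorem~\ref{thm:Lukasiewicz_permutations} applies \emph{to $v$} and yields $a_v=|\pipe(v)|=\nu_v$. Chaining the two equalities produces $a_w=\nu_v=\nu_{w_oww_o}$, as claimed.

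I do not expect any genuine obstacle here: the entire content is the combination of two established facts, and the only subtlety is logistical. One must apply Theorem~\ref{thm:Lukasiewicz_permutations} to $v=w_oww_o$ rather than to $w$, precisely because $\Luk_n$ is \emph{not} stable under $w_o$-conjugation (as the example $w_o^4(3214)w_o^4=1432\notin\Luk_4$ preceding the statement shows). The role of Proposition~\ref{prop:stability} is exactly to transport the computation of $a_w$ over to the conjugate, where the Łukasiewicz hypothesis—and hence the clean count $\nu_v$—is in force.
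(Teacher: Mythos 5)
Your argument is exactly the paper's: the corollary is stated as an immediate consequence of Proposition~\ref{prop:stability} (giving $a_w=a_{w_oww_o}$) combined with Theorem~\ref{thm:Lukasiewicz_permutations} applied to the conjugate $w_oww_o\in\Luk_n$ (giving $a_{w_oww_o}=|\pipe(w_oww_o)|=\nu_{w_oww_o}$). Your proposal is correct and matches the paper's reasoning, including the observation that one must apply the theorem to the conjugate since $\Luk_n$ is not closed under $w_o$-conjugation.
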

\noindent So for instance we get $a_{1432}=\nu_{3214}=1$. Notice that this is different from $\nu_{1432}=5$.

\begin{remark}
The cardinality $|\Luk_n|=\frac{1}{n}\binom{2n-2}{n-1}$ is asymptotically equal to $4^{n-1}n^{-3/2}/\sqrt{\pi}$ by Stirling's formula. Compared to the asymptotics for $|\sgrpp_n|$ computed in ~\cite{Mar01}, one sees that the ratio $|\Luk_n|/|\sgrpp_n|$ is asymptotically equivalent to $C/n$ for an explicit constant $C$.
\end{remark}

\begin{remark}
\label{rem:dominant}
A \textit{dominant} permutation is defined as a permutation whose code is a partition, or equivalently as a $132$-avoiding permutation \cite{Man01}. Such a permutation has a single pipe dream (necessarily its bottom pipe dream), and so $a_w=1$ by Theorem~\ref{thm:Lukasiewicz_permutations} for any $w\in\sgrpp_n$. By the invariance under $w_o$-conjugation (Corollary~\ref{cor:Lukasiewicz_conjugates})  $213$-avoiding permutations $w$ in $\sgrpp_n$ also satisfy $a_w=1$. Up to $n=11$ these are the only classes of permutations for which $a_w$ is equal to $1$.
\end{remark}

We now connect \L{}ukasiewicz permutations with the cyclic shifts of permutations.

\begin{proposition}
\label{prop:distinct_Lukasiewicz}
For $w\in \sgrpp_n$, the permutations $w^{(i)}$ are pairwise distinct, and exactly one of them is \L{}ukasiewicz.
\end{proposition}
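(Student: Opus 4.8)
The plan is to push everything down to the level of codes and then combine the classical cycle lemma with a single structural fact about indecomposable permutations. First I would record how $\code{\cdot}$ behaves under the relevant operations. Writing $w=w_1\times\cdots\times w_k$ for the factorization into indecomposables (with block boundaries $0=p_0<p_1<\cdots<p_k=n$), a direct inversion count shows $\code{w}=\code{w_1}\cdots\code{w_k}$ is the concatenation of the block codes, so that $\code{w^{(j)}}=\code{w_j}\cdots\code{w_k}\code{w_1}\cdots\code{w_{j-1}}$ is the \emph{block rotation} of $\code{w}=(c_1,\dots,c_n)$ starting at $p_{j-1}$. Thus the $k$ codes $\code{w^{(j)}}$ are exactly the $k$ cyclic rotations of $\code{w}$ that begin at a block boundary.

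For pairwise distinctness I would argue that $\code{w}$ is aperiodic as a cyclic word. Since $w\in\sgrpp_n$, the step sequence $(c_1-1,\dots,c_n-1)$ sums to $(n-1)-n=-1$; any nontrivial period $d\mid n$ would force $n/d$ to divide $1$, hence $d=n$. Therefore the $n$ rotations of $\code{w}$ are pairwise distinct sequences, so the $k$ block rotations are distinct, and as $\code{\cdot}$ is a bijection the permutations $w^{(1)},\dots,w^{(k)}$ are pairwise distinct.

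For ``exactly one is \L{}ukasiewicz'' I would phrase the \L{}ukasiewicz condition as a lattice-path condition: setting $A_l\coloneqq(c_1-1)+\cdots+(c_l-1)$, so $A_0=0$ and $A_n=-1$, a composition is \L{}ukasiewicz precisely when $A_l\ge 0$ for $1\le l\le n-1$. The cycle lemma (Dvoretzky--Motzkin), applied to the steps $c_i-1\ge-1$ with total sum $-1$, then gives that exactly one of the $n$ rotations of $\code{w}$ is \L{}ukasiewicz, namely the one beginning immediately after the first position at which $A$ attains its minimum over $\{0,1,\dots,n\}$. Granting that this minimizing position is a block boundary, the unique \L{}ukasiewicz rotation is one of the $k$ block rotations, and combined with the distinctness above this shows exactly one $w^{(j)}$ is \L{}ukasiewicz.

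The crux, and the step I expect to be the main obstacle, is the following fact about an indecomposable $u\in\sgrp_m$ with code $(d_1,\dots,d_m)$: one has $d_1+\cdots+d_t\ge t$ for $1\le t\le m-1$. Indeed, writing out $A$ over a single block shows that its increments there are the partial sums $(d_1+\cdots+d_s)-s$ of that block's code, so this inequality makes $A_{p_{j-1}+s}\ge A_{p_{j-1}}$ for all interior positions $s<m_j$; hence the global minimum of $A$, together with its first occurrence, must sit at a block boundary, which is exactly what the previous paragraph needs. I would prove the inequality by induction on $t$: indecomposability gives $u([t])\ne[t]$, so choosing $i_0\le t$ to be the position of $\max\{u(1),\dots,u(t)\}\ge t+1$ yields $d_{i_0}=u(i_0)-i_0\ge t+1-i_0$, and combining with the inductive bound $d_1+\cdots+d_{i_0-1}\ge i_0-1$ gives $d_1+\cdots+d_t\ge(d_1+\cdots+d_{i_0-1})+d_{i_0}\ge t$. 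As a sanity check, the $k=1$ case of the proposition then recovers that every indecomposable permutation in $\sgrpp_n$ is itself \L{}ukasiewicz.
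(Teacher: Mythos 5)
Your proof is correct and follows the same strategy as the paper's: pairwise distinctness from aperiodicity of the code (since $n$ and $\sum_i c_i = n-1$ are coprime) together with the Dvoretzky--Motzkin cycle lemma applied to the steps $c_i-1$. The only divergence is at the final step, where the paper simply asserts that the cyclic shifts of $\code{w}$ arising as codes of permutations in $\sgrpp_n$ are exactly those of the $w^{(i)}$, whereas you justify this explicitly via the partial-sum inequality $d_1+\cdots+d_t\geq t$ for the code of an indecomposable permutation, which places the first minimum of the walk at a block boundary --- a detail the paper leaves to the reader, and which your induction handles correctly.
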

\begin{proof}
Denote by $(c_1,\dots,c_n)$ the code of $w$. All shifts $(c_j,c_j+1,\ldots,c_n,c_1,\ldots,c_{j-1})$ for $j=1,\ldots,n$ are distinct: otherwise $(c_1,\dots,c_n)$ would be periodic which can not be since $n$ and $\sum_ic_i=n-1$ are coprime. Now the {\em cycle lemma} ensures that exactly one of these shifts is in $\Lukwc{n}$; see for instance~\cite[Lemma 9.1.10]{LothaireApplied} with weight $\delta(k)=k-1$. Now these shifts are codes of permutations in $\sgrpp_n$ exactly for the permutations $w^{(i)}$, which completes the proof.
\end{proof}

Notice that as a consequence of Theorems~\ref{thm:Lukasiewicz_permutations} and \ref{thm:cyclic_sum}, we also have the following corollary.
\begin{corollary}
 If $w\in\Luk_n$, then $|\pipe(w)|\leq |\reduced(w)|$.
\end{corollary}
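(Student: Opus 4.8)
The plan is to read off the inequality directly from the two cited results together with positivity of the $a_w$, so the argument is essentially an assembly rather than a computation. First I would apply Theorem~\ref{thm:Lukasiewicz_permutations}: since $w\in\Luk_n$, the quantity on the left is literally a structure constant, namely $|\pipe(w)|=a_w$. This reduces the claim to the single inequality $a_w\le|\reduced(w)|$.

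Next I would invoke the Cyclic Sum Rule (Theorem~\ref{thm:cyclic_sum}). Writing $w=w^{(1)},\ldots,w^{(k)}$ for the cyclic shifts of $w$ as in \eqref{eq:cyclic_shifts}, that theorem yields $\sum_{i=1}^k a_{w^{(i)}}=|\reduced(w)|$. The key observation is simply that $w=w^{(1)}$ is itself one of its cyclic shifts, so $a_w$ appears as one of the summands on the left. Finally, every summand $a_{w^{(i)}}$ is strictly positive by Corollary~\ref{cor:aw_positivity}, so discarding all but the term $a_w$ only decreases the sum:
\[
|\pipe(w)|=a_w\le \sum_{i=1}^k a_{w^{(i)}}=|\reduced(w)|.
\]

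I do not anticipate any genuine obstacle here, as all the substance is already contained in Theorems~\ref{thm:Lukasiewicz_permutations} and~\ref{thm:cyclic_sum}. The only point worth flagging is that positivity of the $a_{w^{(i)}}$ is precisely what converts the \emph{equality} furnished by the Cyclic Sum Rule into the desired \emph{inequality}. In particular, the inequality becomes an equality exactly when $k=1$, that is, when $w$ is indecomposable (equivalently, has full support), since then the cyclic sum consists of the single term $a_w=|\reduced(w)|$.
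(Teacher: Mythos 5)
Your proof is correct and is exactly the argument the paper intends: it states the corollary as an immediate consequence of Theorems~\ref{thm:Lukasiewicz_permutations} and~\ref{thm:cyclic_sum}, with positivity (Corollary~\ref{cor:aw_positivity}) supplying the step from the cyclic-sum equality to the inequality, just as you spell out. Your closing remark that equality holds precisely when $w$ is indecomposable is also a correct (and nice) observation.
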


It would be interesting to find a combinatorial proof of this corollary, for instance by finding an explicit injection from $\pipe(w)$ to $\reduced(w)$.

\subsection{Coxeter elements} This case is a subcase of the previous one with particularly nice combinatorics. A \emph{Coxeter element} of $\sgrp_n$ is a permutation that can be written as the product of all elements of the set $\{s_1,s_2,\ldots,s_{n-1}\}$ in a certain order. Let $\Cox_n$ be the set of all Coxeter elements of $\sgrp_n$. Since the defining expressions for Coxeter elements are clearly reduced, we have $\Cox_n\subseteq \sgrpp_n$.

Coxeter elements are naturally indexed by subsets of $[n-2]$ as follows: for $w$ a Coxeter element, define $I_w\subset [n-{2}]$ by the following rule: $i\in I_w$ if and only if $i$ occurs before ${i+1}$ in a reduced word for $w$ (equivalently, in \emph{all} reduced words for $w$). Conversely any subset of $[n-{2}]$ determines a unique Coxeter element, and therefore we have $|\Cox_n|=2^{n-2}$.

\begin{lemma}
$\Cox_n\subseteq \Luk_n$.
\end{lemma}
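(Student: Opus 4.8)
The plan is to show directly that the Lehmer code $\code{w}=(c_1,\dots,c_n)$ of a Coxeter element is a \L{}ukasiewicz composition, i.e. that $\sum_{i=1}^k c_i\geq k$ for every $k\in[n-1]$ (this is exactly Definition~\ref{defi:Lukasiewicz_permutations}). First I would record the structural fact that a Coxeter element has length $n-1$ and \emph{full support} $[n-1]$, since each of $s_1,\dots,s_{n-1}$ occurs exactly once in any reduced word; in particular $w\in\sgrpp_n$ and it is indexed by the subset $I_w\subseteq[n-2]$ as in the text. The argument then proceeds by induction on $n$, with base case $n=2$ where $w=s_1=21$ has code $(1,0)$.

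For the inductive step I would isolate the unique occurrence of $s_{n-1}$. Since $s_{n-1}$ commutes with $s_1,\dots,s_{n-3}$, its position relative to $s_{n-2}$ is governed by whether $n-2\in I_w$: if $n-2\notin I_w$ then $s_{n-1}$ precedes $s_{n-2}$ and can be slid to the front, giving $w=s_{n-1}w'$; if $n-2\in I_w$ then it can be slid to the back, giving $w=w's_{n-1}$. In both cases $w'$ is represented by a word containing each of $s_1,\dots,s_{n-2}$ once, so $w'\in\sgrp_{n-1}$ and $\ell(w')\le n-2$; combined with $n-1=\ell(w)\le 1+\ell(w')$ this forces $\ell(w')=n-2$, so that word is reduced and $w'$ has full support $[n-2]$. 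Hence $w'$ is a Coxeter element of $\sgrp_{n-1}$, and by induction $\code{w'}=(c'_1,\dots,c'_{n-1})$ satisfies $\sum_{i=1}^k c'_i\geq k$ for $k\in[n-2]$.

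It then remains to compare $\code{w}$ with $\code{w'}$. In the front case $w=s_{n-1}w'$ acts on $w'$ by swapping the values $n-1$ and $n$: writing $j=(w')^{-1}(n-1)$, the entry $w'(j)=n-1$ becomes $w(j)=n$ while $w(n)=n-1$, and one checks that $c_i=c'_i$ for $i\neq j,n$ and $c_j=c'_j+1$. In the back case $w=w's_{n-1}$ swaps the last two positions, yielding $\code{w}=(c'_1,\dots,c'_{n-2},1,0)$. In either case the first $n-2$ code entries of $w$ dominate those of $w'$, so $\sum_{i=1}^k c_i\geq\sum_{i=1}^k c'_i\geq k$ for all $k\leq n-2$; and for $k=n-1$ we simply use $\sum_{i=1}^{n-1}c_i=\ell(w)=n-1$ since $c_n=0$. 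This establishes the \L{}ukasiewicz inequalities for $w$ and completes the induction.

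The routine but delicate parts are the reducedness of $w'$ (handled by the length inequality above) and the bookkeeping for how a single move of $s_{n-1}$ changes the code; I expect the latter entry-by-entry verification to be the only real obstacle, and it is easily sanity-checked on small cases such as $2413=s_3\cdot 231$ (code $(1,2,0,0)$ from $(1,1,0)$) and $3142=312\cdot s_3$ (code $(2,0,1,0)$ from $(2,0,0)$). I note that this also reproves the cleaner closed form in which $\code{w}$ is supported on $\{1\}\cup\{i+1:i\in I_w\}$ with the nonzero entries equal to the successive gaps, so that the partial sums telescope; but the inductive comparison above already suffices and avoids computing the full code explicitly.
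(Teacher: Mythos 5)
Your proof is correct, but it takes a genuinely different route from the paper's. The paper proves the lemma by writing down the code of a Coxeter element in closed form: from $I_w=\{i_1<\cdots<i_k\}$ it forms the composition $\alpha_w=(i_1,i_2-i_1,\ldots,n-1-i_k)$ of $n-1$ and asserts (leaving the verification to the reader) that $\code{w}$ is obtained by inserting $\alpha_i-1$ zeros after each part $\alpha_i$ and appending a final zero; the \L{}ukasiewicz inequalities are then immediate because each partial sum jumps to at least the current index at every nonzero entry and coasts through the zeros. (The paper also sketches a second proof via the statistic $\bar a(w)=(1,\ldots,1,0)$ and Proposition~\ref{prop:alternative_characterization_Lukasiewicz}.) You instead induct on $n$, sliding the unique $s_{n-1}$ to the front or back according to whether $n-2\in I_w$, recognizing the remaining word as a reduced word for a Coxeter element $w'\in\sgrp_{n-1}$ via the length inequality $n-1\le 1+\ell(w')\le n-1$, and tracking how left multiplication by $s_{n-1}$ (a swap of the values $n-1,n$, which increments $c_j$ at $j=(w')^{-1}(n-1)$) or right multiplication (a swap of the last two positions, which appends $(1,0)$) changes the code. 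Your bookkeeping is right in both cases, the entrywise domination $c_i\ge c'_i$ for $i\le n-2$ correctly transfers the inductive hypothesis, and the $k=n-1$ inequality follows from $\ell(w)=n-1$ as you say. What the paper's argument buys is a one-shot explicit description of $\code{w}$ (used nowhere else, but clean); what yours buys is that you never have to guess the closed form — indeed, as you note, your recursion reproves it — at the cost of a two-case analysis and the code-change verification, which you have carried out correctly.
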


\begin{proof}
We do this by characterizing codes of Coxeter elements. Let $w\in \Cox_n$, and $I_w=\{i_1<\ldots<i_k\}\subset [n-2]$ as defined above. To $I_w$  corresponds $\alpha_w=(i_1,i_2-i_1,\ldots,i_k-i_{k-1},n-1-{i_k})$ a composition of $n-1$ using a folklore bijection between subsets and compositions.
 Finally, writing $\alpha_w=(\alpha_1,\ldots,\alpha_{k+1}) \vDash n-1$, define the weak composition $c_w$ of $n-1$ with $n$ parts by inserting $\alpha_i-1$ zeros after each $\alpha_i$, and append an extra zero at the end. We claim that $c_w=\code{w}$, leaving the easy verification to the reader.

To illustrate this result, pick $w=2513746\in\Cox_7$, with $431265\in\reduced(w)$. We compute successively $I_w=\{1,4\}\subset [5]$, $\alpha_w=(1,3,2)\vDash 6$ and finally $c_w=(1,3,0,0,2,0,0)$ which is indeed the code of $w$.

An alternative proof is to use Proposition~\ref{prop:alternative_characterization_Lukasiewicz} here: using pipe dreams it is easily shown that $\bar{a}(w)=(1,1,\ldots,1,0)$  for any Coxeter element, and this in fact characterizes such elements. Since $(1,1,\ldots,1,0)\in \Lukwc{n}$, Proposition~\ref{prop:alternative_characterization_Lukasiewicz} ensures that Coxeter elements belong to $\Luk_n$.
\end{proof}

It follows that $a_w=|\pipe(w)|$ if $w\in \Cox_n$ by Theorem~\ref{thm:Lukasiewicz_permutations}.
We note that Sean Griffin~\cite{sean} has managed to give a geometric proof of this fact using Gr\"obner degeneration techniques.

\begin{proposition}
\label{prop:cox_beta}
If $w\in \Cox_n$, then $a_w=\beta_{n-1}(I_w)$.
\end{proposition}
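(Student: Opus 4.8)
The plan is to reduce the statement to a count of reduced words and then to a descent-set enumeration. First, since $\Cox_n\subseteq\Luk_n$ by the preceding lemma, Theorem~\ref{thm:Lukasiewicz_permutations} gives $a_w=\nu_w=|\pipe(w)|$. For a Coxeter element every reduced word is a rearrangement of the letters $1,2,\ldots,n-1$ in which each letter occurs exactly once (the defining expression has length $n-1$ and uses each generator once). Hence in Macdonald's identity~\eqref{eq:macdonald} each product $i_1i_2\cdots i_{n-1}$ equals $(n-1)!$, so that $\nu_w=\frac{1}{(n-1)!}\cdot|\reduced(w)|\cdot(n-1)!=|\reduced(w)|$. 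Thus it suffices to establish $|\reduced(w)|=\beta_{n-1}(I_w)$.

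Next I would characterize $\reduced(w)$ explicitly. Coxeter elements are fully commutative, so any two reduced words differ only by commutations of non-adjacent generators; the relative order of an adjacent pair $s_i,s_{i+1}$ is rigid and is recorded precisely by whether $i\in I_w$. Concretely, $\reduced(w)$ is the set of words $i_1\cdots i_{n-1}$ using each letter of $[n-1]$ exactly once such that, for every $i\in[n-2]$, the letter $i$ precedes the letter $i+1$ if and only if $i\in I_w$.

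I would then read a reduced word $i_1\cdots i_{n-1}$ as a permutation $\sigma\in\sgrp_{n-1}$ in one-line notation (so $\sigma(k)=i_k$) and pass to $\tau=\sigma^{-1}$. The letter $i$ sits in position $\sigma^{-1}(i)=\tau(i)$, so ``$i$ precedes $i+1$'' becomes $\tau(i)<\tau(i+1)$, i.e.\ $i\notin\des(\tau)$. The constraint defining $\reduced(w)$ therefore translates into $\des(\tau)=[n-2]\setminus I_w$. Since inversion is a bijection of $\sgrp_{n-1}$, this yields $|\reduced(w)|=\beta_{n-1}([n-2]\setminus I_w)$.

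Finally, complementing values via $\tau\mapsto\bar\tau$ with $\bar\tau(i)=n-\tau(i)$ is a descent-reversing involution on $\sgrp_{n-1}$, giving $\beta_{n-1}(S)=\beta_{n-1}([n-2]\setminus S)$ for every $S\subseteq[n-2]$. Applying this with $S=[n-2]\setminus I_w$ produces $|\reduced(w)|=\beta_{n-1}(I_w)$, which completes the proof. The only step requiring genuine care is the bookkeeping in the translation between ``letter $i$ precedes letter $i+1$'' and the descent condition on $\tau=\sigma^{-1}$, together with the final complementation needed to match the set $I_w$ rather than its complement; the remaining steps are routine.
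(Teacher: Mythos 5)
Your proof is correct, but it follows a genuinely different route from the paper's. The paper proves the proposition by constructing an explicit recursive bijection $\phi$ from $\pipe(w)$ to the set of permutations in $\sgrp_{n-1}$ with descent set $I_w$, built from the observation that a pipe dream of a Coxeter element has exactly one $+$ on each antidiagonal (it still relies on $a_w=|\pipe(w)|$ from Theorem~\ref{thm:Lukasiewicz_permutations}, as you do). You instead reduce everything to reduced words: since every reduced word of $w\in\Cox_n$ uses each letter of $[n-1]$ exactly once, Macdonald's identity \eqref{eq:macdonald} collapses to $\nu_w=|\reduced(w)|$, and then the commutation-class description of $\reduced(w)$ (each letter once, with $i$ before $i+1$ iff $i\in I_w$) is counted by inverting and complementing to land on $\beta_{n-1}(I_w)$. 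This is essentially the alternative argument the paper sketches in the remark immediately following the proposition, where $a_w=|\reduced(w)|$ is instead obtained from Theorem~\ref{thm:positive_formula} (each term contributes $1$ since $A_{(1^{n-1},0)}=(n-1)!$) and the descent-set count is phrased via standard tableaux of a ribbon shape. Your version is shorter and avoids manipulating pipe dreams directly; the paper's bijection is more explicit combinatorially. The only steps you pass over quickly are standard: that every word on $[n-1]$ with each letter once and the prescribed relative orders of adjacent pairs is reached from a fixed reduced word by commutations alone (linear extensions of the poset generated by the edge orientations), and that such a product of $n-1$ distinct generators is reduced; both are routine and the paper itself takes them for granted when defining $I_w$.
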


\begin{proof}
It is enough to exhibit a bijection $\phi$ between $\pipe(w)$ and permutations of $\sgrp_{n-1}$ with descent set $I_w$. If $n=2$ then $w=s_1$ and we associate to it the identity permutation in $\sgrp_1$. Now let $w\in \Cox_{n+1}$ for $n\geq 2$.
Note that $\gamma\in \pipe(w)$ has exactly one $+$ in each antidiagonal $A_k$ given by $i+j=k-1$ for $k=1,\ldots,n$; we label them $+_1,\ldots,+_n$. Removing $+_n$  gives a pipe dream $\gamma'$ in $\pipe(w')$  for an element $w'$ in $\Cox_{n}$ since $\gamma'$ has exactly one $+$ in each of the first $n-1$ antidiagonals. By induction we can assume that we have constructed
$\sigma'=\phi(\gamma')\in \sgrp_{n-1}$ with descent set $I_{w'}\subset [n-2]$.

Let $i,j$ be the rows in $\gamma$ containing $+_{n-1},+_n$ respectively. Then define $\sigma$  by incrementing by $1$ all values in $\sigma'$ larger or equal to $n+1-j$, and inserting $n+1-j$ at the end of $\sigma'$.  By immediate induction $\sigma'$ is a permutation ending with $n+1-i$, and $\des(\sigma')=I_{w'}$. Noting that $I_w=I_{w'}\cup\{n-1\}$ if $j>i$ and $I_w=I_{w'}$ if $j\leq i$, one sees that $\des(\sigma)=I_{w}$. We leave the verification that this is a bijection to the reader.
\end{proof}

As interesting special cases, consider the Coxeter elements $w_{odd}$, resp. $w_{even}$, of $\sgrp_n$ defined by the fact that by $I_{w_{odd}}$, resp. $I_{w_{even}}$, consists of all odd, resp. even, integers in $[n-2]$. Then the number $\beta_{n-1}(I_{w_{odd}})=\beta_{n-1}(I_{w_{even}})$ is the \emph{Euler number} $E_{n-1}$ which by definition counts the number of \emph{alternating permutations} in $\sgrp_{n-1}$.
Data up to $n=11$ indicates that the value $a_{w_{odd}}=a_{w_{even}}=E_n$ is the maximal value of $a_w$ over $\sgrpp_n$, and is obtained for these two permutations precisely.

\begin{remark}
Theorem \ref{thm:klyachko} can alternatively be applied directly here to give $a_w=|\reduced(w)|$ instead, since all terms in the sum contribute $1$. The statement of Proposition~\ref{prop:cox_beta} can be deduced from this evaluation also, since reduced words of Coxeter elements are naturally in one-to-one correspondence with standard tableaux of a certain \emph{ribbon shape} attached to $w$, themselves naturally in bijection with permutations having descent set $I_w$. We skip the details.
\end{remark}

\subsection{Grassmannian permutations}
\label{sub:grassmannian}

In this section we give a combinatorial interpretation of $a_w$ when $w$ is a Grassmannian permutation (Theorem~\ref{thm:grassmannian}). Note that this case will be extended to the much larger class of vexillary permutations in Section~\ref{sec:vexillary}.

\begin{definition}
A permutation in $\sgrp_\infty$ is \emph{Grassmannian} if it has a unique descent. It is $m$-Grassmannian if this unique descent is $m\geq 1$.
\end{definition}

The codes $(c_1, c_2,\ldots)$ of $m$-Grassmannian permutations are characterized by $0\leq c_1\leq c_2\leq \cdots\leq c_m$ (with $c_m>0$) while $c_i=0$ for $i>m$.
A Grassmannian permutation $w\in \sgrp_\infty$ is thus encoded by the data $(m,\lambda(w))$, which must satisfy $m\geq \ell(\lambda(w))$.
Conversely any $m,\lambda$ that satisfy $m\geq \ell(\lambda)$ correspond to a permutation in $\sgrp_\infty$. Moreover, such a permutation is in $\sgrp_n$ if and only if $n\geq m+\lambda_1$.

Recall that a \emph{standard Young tableau} $T$ of shape $\lambda\vdash n$ is a filling of the Young diagram of $\lambda$ by the integers $\{1,\ldots,n\}$ that is increasing along rows and columns. A {\em descent} of $T$ is an integer $i<n$ such that $i+1$ occurs in a row strictly below $i$ (here we assume the Young diagram uses the English notation, with weakly decreasing rows from top to bottom).
As illustrated below, for the shape $(3,2)$ for which there are 5 tableaux, the cells containing descents are shaded.

\begin{figure}[!ht]
\includegraphics[scale=1]{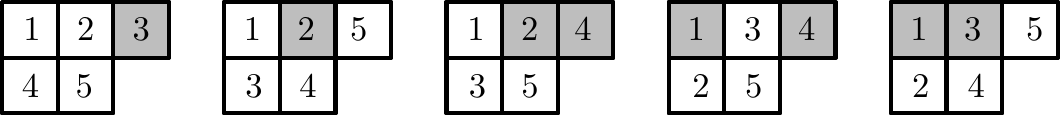}
\end{figure}

Let $\mathrm{SYT}(\lambda)$ be the set of standard Young tableaux of shape $\lambda$ and $\mathrm{SYT}(\lambda,d)$ be the subset thereof containing tableaux with exactly $d$ descents.

\begin{theorem}
\label{thm:grassmannian}
Let $w\in \sgrpp_n$ be a Grassmannian permutation with descent $m$ and shape $\lambda$. Then $a_w$ is equal to $\mathrm{SYT}(\lambda,m-1)$.
\end{theorem}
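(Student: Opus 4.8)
The plan is to compute $a_w = \ds{\schub_w}{n}$ directly using the monomial evaluation formula~\eqref{eq:monomial_evaluation}, exploiting the special structure of Grassmannian permutations. Recall that for an $m$-Grassmannian permutation $w$ with shape $\lambda$, the code is $(c_1,\ldots,c_m,0,\ldots)$ with $0\leq c_1\leq\cdots\leq c_m$, so the Schubert polynomial $\schub_w$ is a \emph{flagged Schur polynomial}. In fact it is well known (see \cite{Man01}) that for Grassmannian $w$ the Schubert polynomial equals the Schur polynomial $s_\lambda(x_1,\ldots,x_m)$, and its monomials $\alpx^{c(\gamma)}$ from the pipe dream expansion~\eqref{eq:pipe_dreams} are indexed by semistandard Young tableaux of shape $\lambda$ with entries in $[m]$, where the exponent $c_i$ records the number of entries equal to $i$. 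So the first step is to rewrite
\[
a_w=\ds{\schub_w}{n}=\sum_{\gamma\in\pipe(w)}(-1)^{|S_{c(\gamma)}|}\beta_n(S_{c(\gamma)})
\]
via~\eqref{eqn:signed_sum_expansion}, and reinterpret the sum as running over $\mathrm{SSYT}(\lambda)$ with entries bounded by $m$.

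Next I would analyze the sign-and-weight $(-1)^{|S_c|}\beta_n(S_c)$ appearing for each monomial $\alpx^c$ of $s_\lambda$. The key is to understand the set $S_c=\{k\in[n-1]\suchthat \sum_{i=1}^k c_i<k\}$. Here the crucial simplification is that all the mass of $c$ is concentrated in the first $m$ coordinates and sums to $n-1$, so $S_c$ is controlled by the partial sums of a semistandard tableau's column-reading data. I expect that the signed sum collapses dramatically: rather than trying to simplify the alternating sum over all of $\mathrm{SSYT}(\lambda)$ directly, the cleaner route is to use the identity between principal specialization and descent statistics. Concretely, I would invoke the hook-content / $q$-analogue philosophy: since $w$ is $132$- and $213$-governed only in special subcases, I instead appeal to the fact that Grassmannian permutations are vexillary and recognize $\nu_w(j)=\schub_{1^j\times w}(1,1,\ldots)$ as a specialization of a Schur function, whose generating function is governed by the descent polynomial of $\mathrm{SYT}(\lambda)$.

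The cleanest self-contained approach, which I would actually carry out, is through Theorem~\ref{thm:quasiindecomposable} together with the classical theory of $(P,\omega)$-partitions. Writing $\nu_w(j)=s_\lambda(\underbrace{1,\ldots,1}_{m+j})$ (the number of $\mathrm{SSYT}(\lambda)$ with entries in $[m+j]$), Stanley's theory gives
\[
\sum_{j\geq 0}s_\lambda(1^{m+j})t^j=\frac{\sum_{T\in\mathrm{SYT}(\lambda)}t^{\dsc(T)-(m-1)+\text{(correction)}}}{(1-t)^{?}},
\]
where the numerator records the descent distribution of standard tableaux of shape $\lambda$. Matching this against~\eqref{eq:summation_indecomposable}, the coefficient of $t^0$ on the appropriately shifted numerator extracts exactly $a_w$, and it equals the number of $T\in\mathrm{SYT}(\lambda)$ with precisely $m-1$ descents, i.e.\ $|\mathrm{SYT}(\lambda,m-1)|$. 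The main obstacle will be bookkeeping the exact power-of-$t$ shift: I must verify that the combination of the $m$ already-present variables, the $j$ shifted variables, and the normalization $N_u$ (anticipating Theorem~\ref{thm:main_5}) lines up so that extracting $a_w$ from the quasiindecomposable generating function~\eqref{eq:summation_indecomposable} picks out exactly the descent value $m-1$. This requires care because an $m$-Grassmannian permutation need not be indecomposable, so I would first reduce to its indecomposable core $u$ via the block factorization, track how the leading and trailing fixed points shift the descent statistic, and confirm the index $m-1$ survives unchanged.
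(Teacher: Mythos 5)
Your proposal is correct, but it takes a genuinely different route from the paper. The paper's own proof is essentially a citation: it writes $\schub_w=s_\lambda(x_1,\ldots,x_m)$ and then invokes results from the prequel \cite{DS} on divided symmetrization of Schur polynomials to evaluate $\ds{s_\lambda(x_1,\ldots,x_m)}{n}$ directly. You instead abandon the signed pipe-dream sum (rightly --- it does not collapse easily) and route the argument through Theorem~\ref{thm:quasiindecomposable} combined with the classical generating function $\sum_{N}s_\lambda(1^N)t^N=\bigl(\sum_{T\in\mathrm{SYT}(\lambda)}t^{\dsc(T)+1}\bigr)/(1-t)^{|\lambda|+1}$. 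The bookkeeping you flag as the main obstacle does work out: writing $w=1^{m-m_0}\times u\times 1^{b}$ with $u$ indecomposable Grassmannian of descent $m_0$, indecomposability forces $\ell(\lambda)=m_0$, hence $s_\lambda(1^N)=0$ for $N<m_0$ and $\sum_{j\geq 0}\nu_u(j)t^j=\bigl(\sum_T t^{\dsc(T)+1-m_0}\bigr)/(1-t)^n$; comparing with \eqref{eq:summation_indecomposable} gives $a_{u^{[m-m_0]}}=|\mathrm{SYT}(\lambda,(m-m_0)+m_0-1)|=|\mathrm{SYT}(\lambda,m-1)|$ as required. What each approach buys: the paper's is a one-line deduction from the prequel's machinery; yours is self-contained within this paper and is in fact precisely the specialization of the Section~\ref{sec:vexillary} vexillary argument (Theorem~\ref{thm:vexillary} and Corollary~\ref{cor:vexillary}) to the Grassmannian case --- the paper itself observes at the end of that section that the Grassmannian interpretation is recovered this way with the signature $e\equiv 1$, $f\equiv 0$. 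The only cosmetic criticisms are that your opening paragraph on the monomial evaluation formula is a dead end you then discard, and that you should write $\nu_u(j)$ for the indecomposable core rather than $\nu_w(j)$ throughout, as you do correctly acknowledge at the end.
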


\begin{proof}
 In this case, the Schubert polynomial $\schub_w$ is known to be the Schur polynomial $s_\lambda(x_1,\cdots,x_m)$ \cite[Proposition 2.6.8]{Man01}.
We thus have to compute $a_w=\ds{s_\lambda(x_1,\ldots,x_m)}{n}$.
This is a consequence of the results of \cite{DS} about divided symmetrizations of (quasi)symmetric functions: see Proposition 4.4 and Example 4.6 in \cite{DS}.
\end{proof}

\begin{example}
Consider the permutations $w_1=351246$ and $w_2=146235$, which are the two Grassmannian permutations in $\sgrpp_6$ with shape $(3,2)$. Note that $w_1$ has descent $2$ while $w_2$ has descent $3$. So $a_{w_1}=\mathrm{SYT}(\lambda,1)=2$ and $a_{w_2}=\mathrm{SYT}(\lambda,1)=3$ from the inspection above.
\end{example}

It is interesting to deduce $a_w>0$ and the invariance under $w_o$-conjugation (cf. Section \ref{sub:positivity}) from this combinatorial interpretation. Note that the inverse of a Grassmannian permutation is not in general Grassmannian, so at this stage the invariance under inverses is not apparent.

Positivity of $a_w$ for $w$ Grassmannian can be shown to be equivalent to the following statement: {\em for any shape $\lambda$ and any integer $d$ satisfying $\lambda_1'-1\leq d\leq |\lambda|-\lambda_1$, then $\mathrm{SYT}(\lambda,d)\neq \emptyset$.} It is indeed possible to construct explicitly such a tableau in $\mathrm{SYT}(\lambda,d)$; we omit the details.

Now suppose $w$ is $m$-Grassmannian with shape $\lambda\vdash n-1$. Then $w_oww_o$ is also Grassmannian, with descent $n-m$ and associated shape $\lambda'$, the transpose of $\lambda$. It is then a simple exercise to show that transposing implies $\mathrm{SYT}(\lambda,m-1)=\mathrm{SYT}(\lambda',n-m-1)$.\smallskip

We finish by giving a pleasant evaluation for a family of mixed Eulerian numbers. Recall that the \emph{content} of a cell in the $i$th row and $j$th column in the Young diagram of a partition $\lambda$ is defined to be $j-i$.
\begin{corollary}
\label{cor:mixed_eulerian_grassmannian}
Let $w\in\sgrpp_{n}$ be an $m$-Grassmannian permutation of shape $\lambda\vdash n-1$.
For $i=1,\ldots,n-1$, let $c_i$ be the number of cells of $\lambda$ with content $i-m$. Then
\[A_{(c_1,\ldots,c_{n-1},0)}=|\mathrm{SYT}(\lambda,m-1)|\prod_{(i,j)\in\lambda}h(i,j),\]
where $h(i,j)=\lambda_i+\lambda'_j-i-j+1$ is the hook-length of the cell $(i,j)$ in $\lambda$.
\end{corollary}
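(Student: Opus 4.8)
The plan is to combine the two expressions for $a_w$ that are already available for Grassmannian permutations — Theorem~\ref{thm:positive_formula} and Theorem~\ref{thm:grassmannian} — with the observation that every reduced word of a Grassmannian permutation carries the \emph{same} content vector. First I would record that an $m$-Grassmannian permutation $w$ is $321$-avoiding: its one-line notation splits into two increasing runs $w(1)<\cdots<w(m)$ and $w(m+1)<\cdots<w(n)$, so any inversion must straddle position $m$ and three decreasing values cannot coexist. By the characterization of $321$-avoiding permutations as fully commutative recalled in Section~\ref{sub:permutation_statistics}, any two reduced words of $w$ are linked by commutation relations alone, and since commutations preserve the multiset of letters used, the content vector $c(\mathbf{i})$ is constant over $\mathbf{i}\in\reduced(w)$; call this common vector $c$.

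Next I would identify $c$ explicitly. Under the standard correspondence between reduced words of an $m$-Grassmannian permutation of shape $\lambda$ and standard Young tableaux of shape $\lambda$, the letter recorded when an entry is placed in the cell $(a,b)$ is $s_{m+b-a}$; consequently $s_i$ occurs once for each cell of content $i-m$. Thus $c=(c_1,\ldots,c_{n-1},0)$ where $c_i$ is the number of cells of $\lambda$ of content $i-m$, matching the statement (a direct inspection of the heap of $w$, or a short induction on $|\lambda|$, gives the same count).

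With $c(\mathbf{i})=c$ constant, Theorem~\ref{thm:positive_formula} collapses to
\[
a_w=\sum_{\mathbf{i}\in\reduced(w)}\frac{A_{c(\mathbf{i})}}{(n-1)!}=\frac{|\reduced(w)|}{(n-1)!}\,A_c.
\]
I would then use the classical fact that a Grassmannian permutation of shape $\lambda\vdash n-1$ has $|\reduced(w)|=f^\lambda$ reduced words, together with the hook length formula $f^\lambda=(n-1)!/\prod_{(i,j)\in\lambda}h(i,j)$, to obtain $a_w=A_c/\prod_{(i,j)\in\lambda}h(i,j)$. Finally, Theorem~\ref{thm:grassmannian} gives $a_w=|\mathrm{SYT}(\lambda,m-1)|$, and equating the two expressions yields $A_c=|\mathrm{SYT}(\lambda,m-1)|\prod_{(i,j)\in\lambda}h(i,j)$, as claimed.

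The only genuinely delicate point is the content identification in the second paragraph — confirming that the common content vector of the reduced words is exactly the content census of $\lambda$ shifted by $m$. Everything else is a direct substitution of known results. I expect this (standard) step to be the main obstacle, and would dispatch it via the heap/tableau description of reduced words of Grassmannian permutations rather than by a fresh computation.
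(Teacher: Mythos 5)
Your proposal is correct and follows essentially the same route as the paper: full commutativity of $321$-avoiding Grassmannian permutations forces a common content vector, Theorem~\ref{thm:positive_formula} then collapses to $a_w=\frac{|\reduced(w)|}{(n-1)!}A_c$, and the hook-length formula together with Theorem~\ref{thm:grassmannian} finishes the argument. The only difference is that you explicitly verify that the common content vector is the content census of $\lambda$ shifted by $m$, a point the paper leaves implicit in the statement of the corollary.
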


\begin{proof}
Grassmannian permutations are \emph{fully commutative} as they are $321$-avoiding, so all their reduced expressions have the same $c(\mathbf{i})$.
It follows from Theorem~\ref{thm:positive_formula} that
\[a_w=\frac{|\reduced(w)|}{(n-1)!}A_{(c_1,\ldots,c_{n-1},0)}.\]
Now
\[|\reduced(w)|=|\mathrm{SYT}(\lambda)|=\frac{(n-1)!}{\prod_{(i,j)\in\lambda}h(i,j)}\]
by the hook-length formula. The conclusion follows from Theorem~\ref{thm:grassmannian}.
\end{proof}

We discuss the fully commutative case in Section~\ref{sec:further_remarks}.

\section{The case of vexillary permutations}
\label{sec:vexillary}

In this section we will give a combinatorial interpretation to $a_w$ for $w$ vexillary in $\sgrpp_n$.

\begin{definition}
A permutation is \emph{vexillary} it it avoids the pattern 2143.
\end{definition}

They were introduced in \cite{Las82}. This is an important class of permutations in relation to Schubert calculus, containing both dominant and Grassmannian permutations. The Stanley symmetric function $F_w$ \cite{St84} is equal to a single Schur function if and only if $w$ is vexillary. Combinatorially,  vexillary permutations correspond to leaves of the Lascoux-Sch\"utzenberger tree, and play a special role in the Edelman-Greene; see~\cite{Man01} and the references therein.

\begin{proposition}
\label{prop:properties_vexillary}
The class of vexillary permutations in $\sgrp_n$ is closed under taking inverses, and conjugation by $w_o$. Moreover, vexillary permutations are quasiindecomposable.
\end{proposition}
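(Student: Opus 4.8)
The plan is to deduce all three closure properties from elementary symmetries of pattern avoidance, together with the direct-sum structure of the factorization into indecomposables.

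For closure under inverses, I would invoke the standard fact that $w$ contains a pattern $\sigma$ if and only if $w^{-1}$ contains $\sigma^{-1}$ (an occurrence is a sub-permutation-matrix, and inversion transposes it). Since $2143$ is an involution, i.e. $(2143)^{-1}=2143$, a permutation $w$ avoids $2143$ exactly when $w^{-1}$ does, which is the asserted stability. For conjugation by $w_o$, the key observation is that $w_oww_o$ is the \emph{reverse-complement} of $w$: explicitly $(w_oww_o)(i)=n+1-w(n+1-i)$, which is the $180^\circ$ rotation of the permutation matrix of $w$. As with the inverse, rotating a matrix by $180^\circ$ carries occurrences of $\sigma$ to occurrences of the reverse-complement of $\sigma$. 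A direct check—or simply the observation that the permutation matrix of $2143$ is centrally symmetric—shows that the reverse-complement of $2143$ is again $2143$; hence $w$ avoids $2143$ if and only if $w_oww_o$ does.

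For quasiindecomposability, I would write $w=w_1\times\cdots\times w_k$ as in \eqref{eq:indecomposables} and argue by contradiction, assuming two factors $w_a$ and $w_b$ with $a<b$ are both nontrivial, i.e. lie in some $\sgrp_m$ with $m\geq 2$. Any indecomposable $u\in\sgrp_m$ with $m\geq 2$ satisfies $u(1)>1$ (otherwise $1$ would be a fixed point splitting off a factor), so the value $1$ occurs at some position $q>1$, producing an inverted pair inside the block. Choosing such an inverted pair $(i_1,i_2)$ inside $w_a$ and another $(i_3,i_4)$ inside $w_b$ yields $i_1<i_2<i_3<i_4$ with $w(i_2)<w(i_1)<w(i_4)<w(i_3)$, since in a direct sum every value in the earlier block lies below every value in the later one. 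This is precisely an occurrence of $2143$, contradicting vexillarity. Hence at most one factor is nontrivial, so $w$ has the form $1^a\times u\times 1^b$ with $u$ indecomposable, as required.

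The three arguments are short and essentially independent, so I anticipate no serious obstacle; the only point demanding care is the verification in the last paragraph that inverted pairs drawn from two distinct blocks genuinely interleave into a $2143$ pattern, which is exactly where the simultaneous ordering of positions and of values across a direct sum is used.
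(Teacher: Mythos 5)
Your proposal is correct and follows essentially the same route as the paper: the inverse and $w_o$-conjugation symmetries of the pattern $2143$ give the two closure properties, and quasiindecomposability follows by extracting one inversion from each of two nontrivial indecomposable blocks and checking that they interleave into a $2143$ occurrence. Your write-up merely supplies more detail (the reverse-complement description of $w_o$-conjugation and the explicit verification $w(i_2)<w(i_1)<w(i_4)<w(i_3)$) than the paper's terser version.
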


\begin{proof}
Closure under inverses, resp. conjugation by $w_o$, follows immediately from the fact that the pattern 2143 is an involution, resp. is invariant under conjugation by $w_o$.

Now suppose  $w\in \sgrp_n$ is \emph{not} quasiindecomposable.
Then there exist indecomposable $w_i,w_j\neq 1$ with $i<j$ in the factorization \eqref{eq:indecomposables}.
There exists an inversion in each of $w_i,w_j$, and any pair of such inversions give an occurrence of the pattern 2143 in $w$, so that $w$ is not vexillary.
\end{proof}

In particular we will be able to use Theorem~\ref{thm:quasiindecomposable}. We first need to recall certain tableau combinatorics related to vexillary permutations. Then we shall relate these  tableaux to a certain model of {\em $\epsilon$-tableaux}, in order to apply the theory of $(P,\omega)$-partitions to interpret the left hand side of \eqref{eq:summation_indecomposable} in the vexillary case, and ultimately identify the combinatorial interpretation for $a_w$.

\subsection{Flagged tableaux for vexillary permutations}
\label{sub:flagged}

It is known, see~\cite{Las82,Wac85}, that the Schubert polynomials of vexillary permutations are {\em flagged Schur functions}, which we now describe.

Fix a partition $\lambda$ with $l$ parts, and let ${ b}=(b_1,\ldots,b_l)$ be a nondecreasing sequence of positive integers $1\leq b_1\leq\ldots\leq b_l$. A \emph{flagged tableau} $T$ of shape $\lambda$ and flag ${b}$ is a semistandard Young tableau of shape $\lambda$ such that entries in the $i$th row of $T$ lie in $[b_i]$. The weight $\alpx^T$ of $T$ is the monomial $x_1^{m_1} x_2^{m_2}\cdots $ with $m_i$ the number of entries $i$ in $T$.
Let $\mathrm{SSYT}(\lambda;{ b})$ be the set of flagged tableaux of shape $\lambda$ and flag ${ b}$.
Then \[s_\lambda(\alpx;{ b})=\sum_{T\in \mathrm{SSYT}(\lambda;{ b})}\alpx^T\] is the corresponding {\em flagged Schur function}.\smallskip

Now let $w\in \sgrp_\infty$ be a partition with code ${ c}=\code{w}$. Recall that the shape $\lambda(w)$ is the partition obtained by sorting the nonzero entries of $ c$ in nonincreasing order. Given $i$ such that $c_i>0$, define $e_i$ to be the maximal $j$ such that $c_j\geq c_i$. The {\em flag} $\phi(w)$ of $w$ is defined by ordering the $e_i$ in nondecreasing order.

This can be expressed in a more compact way as follows: Write $\lambda$ uniquely in the  form $\lambda=(p_1^{m_1},p_2^{m_2},\dots,p_r^{m_r})$ with
$ p_1>p_2>\cdots>p_r$. For $1\leq q\leq r$, let $\phi_q$ be the maximum index $j$ such that $c_j\geq p_q$. Then it is clear that $\phi(w)=(\phi_1^{m_1},\ldots,\phi_r^{m_r})$.
\begin{example}
	Consider $w=812697354\in S_9$. We have $\code{w}=(7, 0, 0, 3, 4, 3, 0, 1, 0)$.
	We compute $e_1=1$, $e_4=6$, $e_5=5$, $e_6=6$ and $e_8=8$.
	Thus $\phi(w)=(1,5,6,6,8)$.

	Alternatively, express $\lambda(w)=(7,4,3^2,1)$. We have $\phi_1=1$, $\phi_2=5$, $\phi_3=6$, and $\phi_4=8$.
	This gives the same flag as before.
\end{example}
We note further that an $m$-Grassmannian permutation has flag $\phi=(m,\ldots,m)$, while a dominant permutation has flag $\phi=(m_1^{m_1},(m_1+m_2)^{m_2},\dots,(m_1+m_2+\cdots+m_r)^{m_r})$.

If $w$ is vexillary of shape $\lambda(w)$, then $\schub_w= s_{\lambda(w)}(\alpx,\phi(w))$ (cf. \cite{Las82,Wac85}) and in particular \[\nu_w=|\mathrm{SSYT}(\lambda(w),\phi(w))|.\]

\begin{proposition}\cite{Las82,Macdonald}
\label{prop:vexillary_characterization}
A vexillary permutation is characterized by the data of its shape and flag.
Moreover, $(\lambda=(p_1^{m_1},\ldots,p_r^{m_r}),\phi=(\phi_1^{m_1},\ldots,\phi_r^{m_r}))$ is equal to $(\lambda(w),\phi(w))$ for $w$ vexillary if and only if the following inequalities are satisfied:
\begin{align}
\label{eq:first_set}\phi_q&\geq m_1+\cdots+m_q\quad&\text{for  } q=1,\ldots,r;\\
\label{eq:second_set} 0\leq \phi_{q+1}-\phi_{q}&\leq m_{q+1}+p_{q}-p_{q+1}\quad&\text{for  } q=1,\ldots,r-1.
\end{align}
\end{proposition}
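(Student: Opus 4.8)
The plan is to construct an explicit inverse to the assignment $w \mapsto (\lambda(w), \phi(w))$ on vexillary permutations and then to recognize the two families of inequalities as exactly the conditions under which this inverse is defined and outputs a vexillary permutation. Throughout write $\lambda = (p_1^{m_1}, \dots, p_r^{m_r})$ with $p_1 > \cdots > p_r > 0$ and set $M_q = m_1 + \cdots + m_q$. For any permutation the nonzero parts of $\code{w}$ are precisely the parts of $\lambda(w)$, so there are exactly $M_q$ indices $j$ with $c_j \geq p_q$; by definition $\phi_q = \max\{j : c_j \geq p_q\}$, and since the sets $\{j : c_j \geq p_q\}$ increase with $q$ we automatically have $\phi_1 \leq \cdots \leq \phi_r$.

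First I would record that \eqref{eq:first_set} is a purely combinatorial constraint valid for every permutation: the $M_q$ distinct indices $j$ with $c_j \geq p_q$ all lie in $[1, \phi_q]$, forcing $\phi_q \geq M_q$. The content of the proposition therefore lies in \eqref{eq:second_set} together with the injectivity claim, and this is where vexillarity must be used. I would invoke the classical characterization of vexillary permutations in terms of the shape of their code, due to Lascoux--Sch\"utzenberger and Macdonald (see \cite{Las82, Macdonald, Man01}), to reconstruct $\code{w}$ from $(\lambda, \phi)$ by the following greedy rule: processing $q = 1, \dots, r$ in turn, place the value $p_q$ in the $m_q$ rightmost positions of $[1, \phi_q]$ that have not already been filled. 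One checks on vexillary codes that this reproduces $\code{w}$ exactly (the rightmost position filled at stage $q$ is $\phi_q$, consistent with the flag), whence the map $w \mapsto (\lambda(w), \phi(w))$ is injective on vexillary permutations; this is the ``characterized by shape and flag'' assertion.

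For the necessity of the upper bound in \eqref{eq:second_set}, I would argue by contradiction: if $\phi_{q+1} - \phi_q$ exceeded $m_{q+1} + p_q - p_{q+1}$, then in $\code{w}$ a copy of the smaller value $p_{q+1}$ would be seated far enough to the right of the block of $p_q$'s that the corresponding entries of $w$ realize the forbidden pattern $2143$; making this explicit amounts to exhibiting four positions whose $w$-values have relative order $2143$. Conversely, for sufficiency I would run the greedy construction starting from an arbitrary pair satisfying both \eqref{eq:first_set} and \eqref{eq:second_set}: \eqref{eq:first_set} guarantees that at each stage $q$ there remain $\phi_q - M_{q-1} \geq m_q$ free positions in $[1, \phi_q]$, so the rule never stalls and produces a legitimate code, hence a permutation $w$; one then verifies that $\lambda(w) = \lambda$, that $\phi(w) = \phi$, and, crucially, that \eqref{eq:second_set} prevents any $2143$ pattern so that $w$ is vexillary.

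The step I expect to be the main obstacle is this last verification, namely that the upper bound in \eqref{eq:second_set} is not only necessary but is the exact threshold ensuring $2143$-avoidance of the reconstructed permutation; the delicate point is bounding all the ways a putative pattern could straddle two consecutive flag levels $q$ and $q+1$, rather than just the single worst case. I expect the cleanest organization is to phrase the entire argument in terms of the diagram (or essential set) of $w$: there the greedy placement becomes the statement that the nonzero rows nest into $\lambda$ with prescribed bottom rows, and \eqref{eq:first_set}--\eqref{eq:second_set} become precisely the admissibility conditions for a diagram of this nested shape to be the diagram of a vexillary permutation, which both streamlines the pattern analysis and makes the bijection with pairs $(\lambda, \phi)$ transparent.
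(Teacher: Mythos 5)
The paper does not actually prove this proposition: it is imported wholesale from \cite{Las82,Macdonald}, the only accompanying remark being that \eqref{eq:first_set} is elementary and holds for every permutation. Your count of the $M_q$ indices $j$ with $c_j\geq p_q$ reproduces exactly that remark, and your observation that $\phi_1\leq\cdots\leq\phi_r$ likewise gives the lower bound in \eqref{eq:second_set} for free for any permutation. So there is no argument in the paper to compare against; the question is whether your sketch stands on its own.

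As a self-contained proof it does not, and you have in effect said so yourself. Three things are asserted but not established. First, the greedy rule (place the $m_q$ copies of $p_q$ in the rightmost free positions of $[1,\phi_q]$) is claimed to reconstruct $\code{w}$ for vexillary $w$; this is the injectivity statement, it is where vexillarity enters, and ``one checks on vexillary codes'' is doing all the work --- you would need to state and use a concrete characterization of vexillary codes (or of the diagram rows being totally ordered by inclusion) to justify it. Second, the necessity of the upper bound in \eqref{eq:second_set} is reduced to ``exhibiting four positions whose $w$-values have relative order $2143$,'' but the positions are never exhibited, and passing from an inequality on the code to an actual occurrence of $2143$ is not automatic, since the code records inversion counts rather than pattern structure. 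Third, and most importantly, the sufficiency direction --- that the reconstructed permutation is $2143$-avoiding whenever \eqref{eq:second_set} holds --- is explicitly labelled ``the main obstacle'' and left open; this is precisely the part the paper itself flags as ``more involved, cf.\ \cite{Macdonald}.'' If your intention is, like the paper's, to import the result, the proof should say so and point to the precise statements in Chapter~1 of \cite{Macdonald} (near the formulas (1.41)--(1.42) that the paper cites elsewhere for the shape and flag of $w^{-1}$ and $w_oww_o$); as written, the proposal is a plausible plan of attack rather than a proof.
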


The first set of inequalities is easy to prove (and valid for any permutation). The second one is more involved, cf. \cite{Macdonald}. It is interesting to consider the extreme cases of each:
\begin{itemize}
\item $\phi_q=m_1+\cdots+m_q$ for $q=1,\ldots,r$ iff $w$ is dominant.
\item $\phi_q=\phi_{q+1}$ for $q=1,\ldots,r-1$  iff $w$ is Grassmannian.
\item $\phi_{q+1}-\phi_{q}=m_{q+1}+p_{q}-p_{q+1}$ for $q=1,\ldots,r-1$ iff $w$ is inverse Grassmannian, that is $w^{-1}$ is Grassmannian.
\end{itemize}

\subsection{Plane partitions with arbitrary strict conditions on rows and columns}
\label{sub:epsilon}

We fix $\lambda=(\lambda_1,\dots,\lambda_l)$, where $l=\lambda_1'$ is the number of parts. Recall that a plane partition of shape $\lambda$ is an assignment $T_{i,j}\in\{0,1,2,\ldots\}$ for $(i,j)\in \lambda$ that is weakly decreasing along rows and columns. In other words, if $P_\lambda$ is the poset of cells of $\lambda$ in which $c\leq c'$ if $c$ is to the northwest of $c'$, then a plane partition of shape $\lambda$ is a $P_\lambda$-partition in the sense of Stanley \cite[Section 4.5]{St97}.

\begin{definition}
 A \textit{signature} for $\lambda$ is an ordered pair $\epsilon=(e,f)\in \{0,1\}^{l-1}  \times \{0,1\}^{\lambda_1-1}$.

 An \emph{$\epsilon$-partition} of shape $\lambda$ is a plane partition $(T_{i,j})$ of shape $\lambda$ such that for all $j$, $T_{i,j}>T_{i+1,j}$ if $e_i=1$,  and for all $i$,  $T_{i,j}>T_{i,j+1}$ if $f_j=1$.
\end{definition}

Thus, in an $\epsilon$-partition entries must strictly decrease between rows (\emph{resp.} columns) $i$ and $i+1$ if $e_i=1$ (\emph{resp.} $f_i=1$). Let $\Omega(\lambda,\epsilon,N)$ be the number of $\epsilon$-partitions of shape $\lambda$ with maximal entry at most $N$.
An example of $\epsilon$-partition is given in Figure~\ref{fig:epsilon-partition} for $N=6$.
Plane partitions correspond to the signature $e_i=f_j=0$ for all $i$ and $j$.\smallskip

A \emph{labeling} $\omega$ of $P_\lambda$ is a bijection from $P_\lambda$ to $\{1,\dots,|\lambda|\}$.
Let $\omega_\epsilon$ be a \emph{compatible labeling}: that is, it satisfies $\omega_\epsilon(i,j)>\omega_\epsilon(i+1,j)$ if and only if $e_i=1$, and $\omega_\epsilon(i,j)>\omega_\epsilon(i,j+1)$ if and only if $f_j=1$.

Such a labeling always exists: indeed, let $G_{\lambda,\epsilon}$ be the directed graph whose underlying undirected graph is the Hasse diagram of $P_\lambda$, and with orientation given by $(i,j)\rightarrow (i,j+1)$ if and only if $e_i=1$, and $(i,j)\rightarrow (i+1,j)$ if and only if $f_j=1$. The orientation is easily seen to be {\em acyclic}, which ensures the existence of compatible labelings $\omega_\epsilon$ since those are precisely the \emph{topological orderings} of $G_{\lambda,\epsilon}$, that is the linear orderings of its vertices such that if $u\rightarrow v$ then $\omega_\epsilon(u)< \omega_\epsilon(v)$. These exist exactly when the graph is a directed acyclic graph (DAG).

\begin{figure}[!ht]
\includegraphics{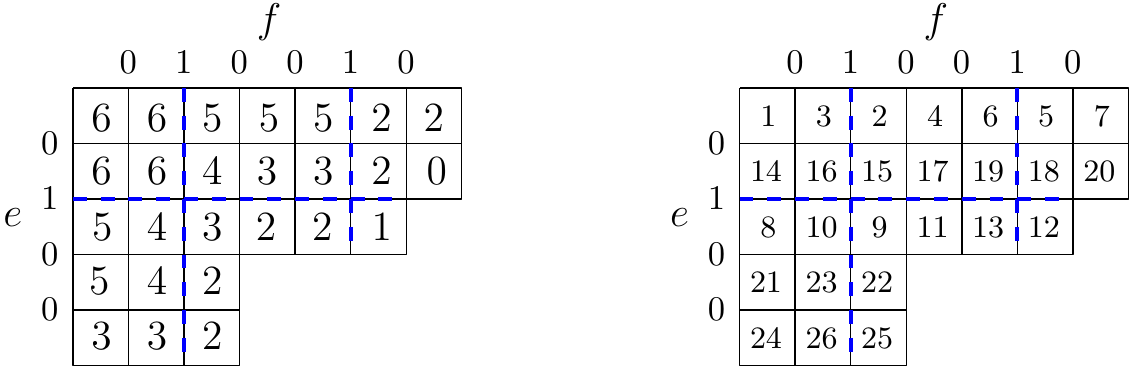}
\caption{ $\lambda=(7,7,6,3,3)$ with signature $\epsilon=(0100,010010)$. An $\epsilon$-partition (left) and a compatible labeling $\omega_\epsilon$ (right). \label{fig:epsilon-partition}
}
\end{figure}

We now recognize that an $\epsilon$-partition of shape $\lambda$ is precisely a $(P_\lambda,\omega_\epsilon)$-partition \cite[Section 7.19]{St99}. By the general theory of $(P,\omega)$-partitions, we get the following result: Let $ SYT(\lambda)$ be the set of standard tableaux of shape $\lambda$. An \textit{$\omega_{\epsilon}$-descent} of $T\in \mathrm{SYT}(\lambda)$ is an entry $k<|\lambda|$ such that $\omega_{\epsilon}(T^{-1}(k))>\omega_{\epsilon}(T^{-1}(k+1))$. Let $\dsc(T;w_\epsilon)$ be the number of $\omega_{\epsilon}$-descents of $T$. Then
\begin{equation}
\label{eq:partitions_to_descents}
\sum_{N\geq 0} \Omega(\lambda,\epsilon,N) t^N=\frac{\sum_{T\in \mathrm{SYT}(\lambda)}t^{\dsc(T;w_\epsilon)}}{(1-t)^{|\lambda|+1}}.
\end{equation}

\subsection{From $\epsilon$-tableaux to flagged tableaux.}
Fix $\lambda,\epsilon$ as in the previous section. We will see that $\Omega(\lambda,\epsilon,N)$ naturally enumerates flagged semistandard tableaux. By taking complements $T_{i,j}\mapsto N+1-T_{i,j}$, we have that $\Omega(\lambda,\epsilon,N)$ counts \textit{$\epsilon$-tableaux}, defined as fillings of $\lambda$ with integers in $\{1,\ldots,N+1\}$ weakly increasing in rows and columns, with strict increases forced  by $e,f$.
Let $\mathcal{T}(\lambda,\epsilon,N)$ be the set of $\epsilon$-tableaux with entries at most $N+1$; by definition $|\mathcal{T}(\lambda,\epsilon,N)|=\Omega(\lambda,\epsilon,N)$. \smallskip

Write $\lambda=(p_1^{m_1}>p_2^{m_2}>\cdots>p_r^{m_r})$ as before, and define $M_q=m_1+\cdots+m_q$ for $q=1\ldots,r$.
Define the partial sums \[\begin{cases}
E_i=E_i(\epsilon)\coloneqq\sum_{k=1}^{i-1} e_k\text{ for }i=1,\ldots, l,\\
F_j=F_j(\epsilon)\coloneqq\sum_{k=1}^{j-1}f_k\text{ for }j=1,\ldots, \lambda_1.
\end{cases}
\]
Also consider $\bar{E}_i=i-1-E_i$ and $\bar{F}_j=j-1-F_j$.
We remark that $\mathcal{T}(\lambda,\epsilon,N)\neq \emptyset$ if and only if
\begin{equation}
\label{eq:Nmin}
N\geq F_{p_q}+E_{M_q}\text{  for  } q=1,\ldots, r.
\end{equation}
Informally put, the quantity $F_{p_q}+E_{M_q}$ counts the number of strict increases that are forced in going from the top left  cell of $\lambda$ to the corner cell in column $p_q$.
For the $\epsilon$-tableau on the left in Figure~\ref{fig:epsilon_to_flagged}, the $E$ and $F$ vectors are given by $(0,0,1,1,1)$ and $(0,0,1,1,1,2,2)$ respectively, and their barred analogues are given by $(0,1,1,2,3)$ and $(0,1,1,2,3,3,4)$.\smallskip

 We want to transform tableaux in $\mathcal{T}(\lambda,\epsilon,N)$ into semistandard Young tableaux, that is $(1^{l-1},0^{\lambda_1-1})$-tableaux.  The general idea is to decrease values in the columns to the right of a strict condition $f_j=1$, and to increase the values in the rows below a weak condition $e_i=0$. This leads to the following definition.

\begin{definition}
\label{defi:str}
Fix an $\epsilon$-tableau $T \in\mathcal{T}(\lambda,\epsilon,N)$. We define $\str(T)=T'$ to be the filling of $\lambda$ given by
\[T'_{i,j}=T_{i,j}-F_j+\bar{E}_i\quad\text{for all }(i,j)\in \lambda.\]
\end{definition}

The $\epsilon$-tableau on the left in Figure~\ref{fig:epsilon_to_flagged}
belongs to $\mathcal{T}(\lambda,\epsilon,N)$ for $\lambda=(7,7,6,3,3)$, $\epsilon=(0100,010010)$, and $N=6$.
Its image under $\str$ is depicted on the right using the $E$ and $F$ computed earlier.
Proposition~\ref{prop:bijection_str} states that $\str$ is bijective between $\mathcal{T}(\lambda,\epsilon,7)$ and $\mathrm{SSYT}(\lambda;(6^2,6^1,9^2))$.
\begin{figure}[!ht]
\includegraphics{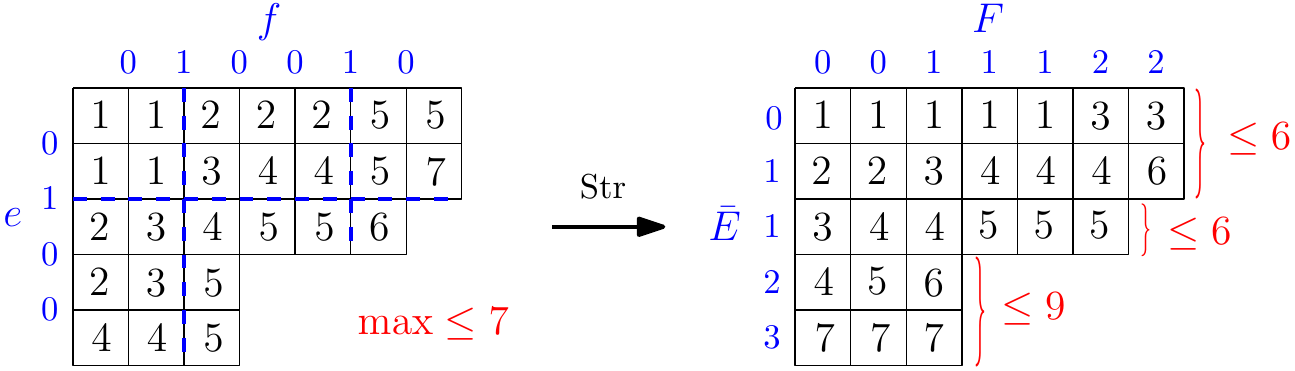}
\caption{\label{fig:epsilon_to_flagged} The $\epsilon$-tableau coming from the $\epsilon$-partition of Figure~\ref{fig:epsilon-partition} (left), and its image under $\str$ (right). The bounds in red indicate constraints of tableaux for which $\str$ is bijective, cf. Proposition~\ref{prop:bijection_str}.
}
\end{figure}

It is easily checked that $T'=\str(T)$ is a semistandard Young tableau. Indeed checking that the columns of $T'$ are strictly increasing amounts to showing that $e_i<T_{i+1,j}-T_{i,j}+1$, whereas showing that the rows are weakly decreasing is equivalent to $f_j\leq T_{i,j+1}-T_{i,j}$. Both these inequalities are immediate.
We now work out what the condition that the maximal entry  in $T$ is at most $N+1$ becomes under the mapping $\str$.

Define $\phi_{\epsilon,N}\coloneqq (\phi_1^{m_1},\ldots,\phi_r^{m_r})$ by
\begin{equation}
\label{eq:defi_phi}
\phi_q=N+1-F_{p_q}+\bar{E}_{M_q}
\end{equation}
 for $q=1\ldots,r$.
 It follows that for $1\leq q\leq r-1$,
 \begin{align}
 \label{eq:conditions_phi}
 \delta_q\coloneqq\phi_{q+1}-\phi_{q}=(\bar{E}_{M_{q+1}}-\bar{E}_{M_{q}})+(F_{p_{q}}-F_{p_{q+1}})
 \end{align}
 is equal to the number of zeros in $e$ between rows $M_{q}$ and $M_{q+1}$ plus the number of ones in $f$ between columns $p_{q+1}$ and $p_{q}$. Therefore $\phi_{\epsilon,N}$ satisfies the inequalities ~\eqref{eq:second_set}.
 \smallskip

 Furthermore, the inequalities \eqref{eq:Nmin} become $\phi_q\geq 1+ E_{M_q}+\bar{E}_{M_q}=M_q$ for $q\geq 1$, which is precisely the inequalities ~\eqref{eq:first_set}.
We invite the reader to check that in our running example, we have that $\phi_1=7-2+1$, $\phi_2=7-2+1$,  and $\phi_3=7-1+3$.
 	This means that $\phi_{\epsilon,N}=(6^2,6^1,9^2)$.

\begin{proposition}
\label{prop:bijection_str}
 Given $\epsilon$ and $N$ satisfying~\eqref{eq:Nmin}, $(\lambda,\phi_{\epsilon,N})$ corresponds to a vexillary permutation $w$.  Furthermore, $\str$ is a bijection between $\mathcal{T}(\lambda,\epsilon,N)$ and $\mathrm{SSYT}(\lambda,\phi_{\epsilon,N})$.
\end{proposition}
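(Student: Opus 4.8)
The plan is to prove the two assertions in turn. For the first, I would invoke the characterization of vexillary pairs in Proposition~\ref{prop:vexillary_characterization}, since the preceding discussion has already verified that $\phi_{\epsilon,N}$ meets both required families of inequalities. For the second, I would exhibit an explicit two-sided inverse to $\str$ and check that each direction respects the defining constraints of the two tableau sets.

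For the vexillary claim, recall that $\phi_{\epsilon,N}=(\phi_1^{m_1},\ldots,\phi_r^{m_r})$ with $\phi_q=N+1-F_{p_q}+\bar{E}_{M_q}$. Rewriting the hypothesis \eqref{eq:Nmin} as $N\geq F_{p_q}+E_{M_q}$ and using $E_{M_q}+\bar{E}_{M_q}=M_q-1$ gives $\phi_q\geq M_q$, which is exactly \eqref{eq:first_set}. The difference $\delta_q=\phi_{q+1}-\phi_q$ was computed in \eqref{eq:conditions_phi} as the number of zeros of $e$ strictly between rows $M_q$ and $M_{q+1}$ plus the number of ones of $f$ strictly between columns $p_{q+1}$ and $p_q$; these counts are bounded by $m_{q+1}$ and $p_q-p_{q+1}$ respectively, so $0\leq\delta_q\leq m_{q+1}+p_q-p_{q+1}$, which is \eqref{eq:second_set}. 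Proposition~\ref{prop:vexillary_characterization} then yields a vexillary $w$ with $(\lambda(w),\phi(w))=(\lambda,\phi_{\epsilon,N})$.

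For the bijection, the forward map is handled by combining the already-checked fact that $\str(T)$ is semistandard with the flag bound: in a row $i$ lying in block $q$ (so $\lambda_i=p_q$), the largest entry of $T'=\str(T)$ sits in the last cell $(i,p_q)$, where $T'_{i,p_q}=T_{i,p_q}-F_{p_q}+\bar{E}_i\leq (N+1)-F_{p_q}+\bar{E}_{M_q}=\phi_q$, using $T_{i,p_q}\leq N+1$ and the monotonicity $\bar{E}_i\leq\bar{E}_{M_q}$ for $i\leq M_q$. Hence $T'\in\mathrm{SSYT}(\lambda,\phi_{\epsilon,N})$. I would then define the candidate inverse by $T_{i,j}=T'_{i,j}+F_j-\bar{E}_i$; since this reverses the cellwise affine shift defining $\str$, the two maps are mutually inverse on arbitrary fillings, so it remains only to see that it sends $\mathrm{SSYT}(\lambda,\phi_{\epsilon,N})$ into $\mathcal{T}(\lambda,\epsilon,N)$. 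The row and column inequalities transform back exactly as in the forward computation, forcing weak increases in general and strict ones precisely where $e_i=1$ or $f_j=1$. The entry range is the delicate point: since $T$ weakly increases along rows and columns, its minimum is at $(1,1)$, where $F_1=\bar{E}_1=0$ gives $T_{1,1}=T'_{1,1}\geq 1$, and its maximum is attained at one of the outer corners $(M_q,p_q)$, where $T_{M_q,p_q}=T'_{M_q,p_q}+F_{p_q}-\bar{E}_{M_q}\leq \phi_q+F_{p_q}-\bar{E}_{M_q}=N+1$.

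I expect the main obstacle to be the bookkeeping around the extremal entries: pinning down that the row maximum of $T'$ occurs at column $p_q$ and that the global maximum of the inverse image occurs at an outer corner $(M_q,p_q)$, and then confirming that the definition $\phi_q=N+1-F_{p_q}+\bar{E}_{M_q}$ is calibrated so that the flag bound is saturated exactly there. Everything else is the routine reversal of the affine relation $T'_{i,j}=T_{i,j}-F_j+\bar{E}_i$ established in Definition~\ref{defi:str}.
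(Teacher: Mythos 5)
Your proposal is correct and follows the same route as the paper: the vexillary claim is reduced to Proposition~\ref{prop:vexillary_characterization} via the inequalities already derived from \eqref{eq:Nmin} and \eqref{eq:conditions_phi}, and the bijection is established by the explicit inverse $U_{i,j}\mapsto U_{i,j}+F_j-\bar{E}_i$. The paper's own proof is terser (it declares the flag-bound and entry-range checks ``clear''), whereas you carry them out explicitly at the outer corners $(M_q,p_q)$; your details are accurate and consistent with the calibration $\phi_q=N+1-F_{p_q}+\bar{E}_{M_q}$.
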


\begin{proof}
We have already checked that the inequalities of Proposition~\ref{prop:vexillary_characterization} were satisfied under the hypotheses. It is also clear that $\str$ is well-defined, and that
$U_{i,j}\mapsto U_{i,j}+F_j-\bar{E}_i$ provides the desired inverse.
\end{proof}

\subsection{Combinatorial interpretation of $a_w$}
Let $w$ be a vexillary permutation of shape $\lambda\vdash n-1$ and flag $\phi$. From Proposition \ref{prop:properties_vexillary}, $w=1^m \times u$ with $u$ indecomposable and vexillary. Clearly $\lambda(u)=\lambda$, while  $\phi(w)$ is obtained from $\phi(u)$ by adding $m$ to each entry; let us write this $\phi(w)=m+\phi(u)$ in short. We thus have
\begin{equation}
\nu_u(m)=|\mathrm{SSYT}(\lambda,m+\phi(u))|.
\end{equation}

The next lemma provides some converse to Proposition~\ref{prop:bijection_str}.

\begin{lemma}
\label{lem:signature_from_vexillary}
Let $u$ be indecomposable and vexillary. There exists a signature $\epsilon_u$ on $\lambda(u)$ and a nonnegative integer $N_u$ such that $\phi(u)=\phi_{\epsilon_u,N_u}$. Moreover $N_u$ is given by \[N_u=\max_q(F_{p_q}(\epsilon_u)+E_{M_q}(\epsilon_u)).\]
\end{lemma}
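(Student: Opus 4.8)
The plan is to prove the two assertions in tandem: first build a signature $\epsilon_u=(e,f)$ whose flag $\phi_{\epsilon_u,N}$ reproduces the \emph{consecutive differences} of $\phi(u)$ for \emph{every} value of $N$, and then show that the minimal admissible $N$ permitted by \eqref{eq:Nmin} already reproduces $\phi(u)$ exactly. Throughout I write $\lambda(u)=(p_1^{m_1},\dots,p_r^{m_r})$, $M_q=m_1+\cdots+m_q$, $\phi(u)=(\phi_1^{m_1},\dots,\phi_r^{m_r})$, and set $d_q:=\phi_q-M_q$, which is $\geq 0$ by the inequalities \eqref{eq:first_set}. By \eqref{eq:second_set} each difference $\delta_q:=\phi_{q+1}-\phi_q$ satisfies $0\le \delta_q\le m_{q+1}+p_q-p_{q+1}$.

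For the construction I would split each $\delta_q=\alpha_q+\beta_q$ with $0\le\alpha_q\le m_{q+1}$ and $0\le\beta_q\le p_q-p_{q+1}$ (for instance $\alpha_q=\min(\delta_q,m_{q+1})$, so $\beta_q=\max(0,\delta_q-m_{q+1})\le p_q-p_{q+1}$). I then define $\epsilon_u$ by placing, among the $m_{q+1}$ entries of $e$ between rows $M_q$ and $M_{q+1}$, exactly $\alpha_q$ zeros, and among the $p_q-p_{q+1}$ entries of $f$ between columns $p_{q+1}$ and $p_q$, exactly $\beta_q$ ones; the interior entries $e_1,\dots,e_{M_1-1}$ and $f_1,\dots,f_{p_r-1}$ are chosen arbitrarily. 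By construction $\bar{E}_{M_{q+1}}-\bar{E}_{M_q}=\alpha_q$ and $F_{p_q}-F_{p_{q+1}}=\beta_q$, so \eqref{eq:conditions_phi} shows that $\phi_{\epsilon_u,N}$ has the prescribed differences $\delta_q$ for any $N$.

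Next I would carry out the bookkeeping. Writing $g_q:=F_{p_q}+E_{M_q}$, the choices above give $E_{M_{q+1}}-E_{M_q}=m_{q+1}-\alpha_q$ and $F_{p_{q+1}}-F_{p_q}=-\beta_q$, hence $g_{q+1}-g_q=m_{q+1}-\delta_q=-(d_{q+1}-d_q)$; therefore $g_q+d_q$ is independent of $q$, say equal to $C$. Using $\bar{E}_{M_q}=M_q-1-E_{M_q}$ in \eqref{eq:defi_phi}, the $q$-th entry of $\phi_{\epsilon_u,N}$ equals $N-g_q+M_q$, so the choice $N_u:=C$ yields $\phi_{\epsilon_u,N_u}=(d_q+M_q)_q=\phi(u)$. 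Since $\max_q g_q=C-\min_q d_q$, the desired equality $N_u=\max_q g_q$ (and with it the admissibility constraint \eqref{eq:Nmin}, which holds because $N_u\ge g_q$ for all $q$) is \emph{equivalent} to the single arithmetic statement $\min_q d_q=0$.

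Proving $\min_q d_q=0$ is the one substantive point, and it is exactly where indecomposability of $u$ enters; this is the step I expect to require the real idea. The mechanism is that lowering the whole flag by one, replacing $\phi(u)$ by $\phi(u)-1$, leaves all differences $\delta_q$ untouched and hence preserves \eqref{eq:second_set}, while it preserves \eqref{eq:first_set} precisely when $\min_q(\phi_q-M_q)\ge 1$. Thus if $\min_q d_q\ge 1$, the datum $(\lambda,\phi(u)-1)$ still satisfies Proposition~\ref{prop:vexillary_characterization} and so equals $(\lambda(u'),\phi(u'))$ for some vexillary $u'$. Comparing codes (prepending a fixed point sends $c(u')$ to $(0,c(u'))$) gives $\lambda(1\times u')=\lambda(u)$ and $\phi(1\times u')=1+\phi(u')=\phi(u)$; since $1\times u'$ is again vexillary (a leading fixed point cannot enter a $2143$ pattern), the uniqueness in Proposition~\ref{prop:vexillary_characterization} forces $u=1\times u'$, so $u(1)=1$, contradicting indecomposability. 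Hence $\min_q d_q=0$, which completes the proof and delivers $N_u=\max_q\bigl(F_{p_q}(\epsilon_u)+E_{M_q}(\epsilon_u)\bigr)$. Once this ``the flag can be lowered iff $u$ has a leading fixed point'' principle is isolated, the remainder is the routine partial-sum computation verified above.
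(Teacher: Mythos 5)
Your proof is correct and follows essentially the same route as the paper: you build $\epsilon_u$ by distributing zeros of $e$ and ones of $f$ to realize the flag differences $\delta_q$ (the paper's equation \eqref{eqn:ef}), and you invoke indecomposability through the same mechanism, namely that the flag of $u$ must attain equality in \eqref{eq:first_set} for some $q$ (equivalently, cannot be uniformly lowered) because $u$ has no leading fixed point. Your version merely makes the paper's terse final identification explicit via the constancy of $g_q+d_q$, which is a welcome clarification but not a different argument.
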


\begin{proof}
	Let $\phi\coloneqq \phi(u)$, $\lambda\coloneqq\lambda(u)$.
		Also, like before $l=\ell(\lambda)$. We claim that there exist $(e_1,\dots,e_{l-1})\in \{0,1\}^{l-1}$ and $(f_1,\dots,f_{\lambda_1-1})\in\{0,1\}^{\lambda_1-1}$ such that
		\begin{align}\label{eqn:ef}
		\sum_{M_{q}\leq i\leq M_{q+1}-1}(1-e_i)+\sum_{p_{q+1}\leq j\leq p_{q}-1}f_j=\phi_{q+1}-\phi_{q}
		\end{align}
		has solutions for all $1\leq q\leq r-1$.
		Indeed, as $u$ is vexillary, the inequalities \eqref{eq:second_set} state that for any $1\leq q\leq r-1$, we have $\phi_{q+1}-\phi_{q}\leq m_{q+1}+p_{q}-p_{q+1}$.
		Now, in \eqref{eqn:ef}, the first sum runs over $m_{q+1}$ elements, whereas the second sum runs over $p_{q}-p_{q+1}$ elements.
		It therefore follows that we can pick  $e_{M_{q}},\ldots,e_{M_{q+1}-1}, f_{p_{q+1}},\ldots,f_{p_{q}-1}$ in $\{0,1\}$ such that \eqref{eqn:ef} is satisfied. In fact, there are in general many such choices.
		Having made these choices for $1\leq q\leq r-1$, we subsequently pick $e_{1},\ldots,e_{M_1-1}, f_{1},\ldots,f_{p_1-1}$ arbitrary to obtain $(e_1,\dots,e_{l-1})$ and $(f_1,\dots,f_{\lambda_1-1})$.

These choices comprise our signature $\epsilon_u$. Indeed, it is readily checked that \eqref{eqn:ef} is \eqref{eq:conditions_phi} in disguise.
Now define $\phi'=\phi_{\epsilon_u,N_u}$ with the value of $N_u$ in the lemma.
There is thus an equality in~\eqref{eq:Nmin} for a certain $q\in [r]$, which translates to an equality in \eqref{eq:first_set} for the same $q$. This shows that the vexillary permutation determined by the flag $\phi'$ does not have $1$ as a fixed point.
It is therefore equal to $u$, and it follows that $\phi'=\phi$ as wanted.
\end{proof}

\begin{example}
\label{ex:signature_from_vexillary}
	Consider $u=346215$ with shape $\lambda=(3^1,2^2,1^1)$ and $\phi(u)=(3^1,3^2,4^1)$.
	We then have $(p_1,p_2,p_3)=(3,2,1)$ and $(M_1,M_2,M_3)=(1,3,4)$.
	The sequences $(e_1,e_2,e_3)$ and $(f_1,f_2)$ which comprise the signature $\epsilon_u$ need to satisfy  $(1-e_3)+f_1=1$ and $(1-e_2)+(1-e_1)+f_2=0$.
	Thus, we may pick $(e_1,e_2,e_3)=(1,1,0)$, and $(f_1,f_2)=(0,0)$. The corresponding $E$ and $F$ vectors are therefore $(0,1,2,2)$ and $(0,0,0)$ respectively.
	It follows that $N_u$ is $\max{\{0+0,0+2,0+2\}}=2$.
\end{example}

\begin{theorem}
\label{thm:vexillary}
Let $u\in \sgrp_{p+1}$ of shape $\lambda\vdash n-1$ be an indecomposable vexillary permutation, and choose $\epsilon_u,N_u$ as in Lemma~\ref{lem:signature_from_vexillary}. Moreover, let $\omega_u\coloneqq \omega_{\epsilon_u}$ be an $\epsilon_u$-compatible labeling as defined in Section~\ref{sub:epsilon}.

Let $m\in\{0,\ldots,n-p-1\}$ and consider the permutation $u^{[m]}\in\sgrpp_n$ defined by $u^{[m]}=1^m\times u\times1^{n-p-1}$. Then we have \[\sum_{j\geq 0}\nu_u(j)t^j=\frac{\sum_{T\in \mathrm{SYT}(\lambda)}t^{\dsc(T;\omega_u)-N_u}}{(1-t)^{n}}.\]
\end{theorem}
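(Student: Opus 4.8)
The plan is to reduce the identity to the $(P,\omega)$-partition generating function \eqref{eq:partitions_to_descents} by recognizing each $\nu_u(j)$ as a count of $\epsilon_u$-tableaux. Recall from the opening of this subsection that $\nu_u(m)=|\mathrm{SSYT}(\lambda,m+\phi(u))|$, where $m+\phi(u)$ is the flag obtained by adding $m$ to every part of $\phi(u)$.

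The first step I would carry out is to verify that $m+\phi(u)=\phi_{\epsilon_u,N_u+m}$. By Lemma~\ref{lem:signature_from_vexillary} one has $\phi(u)=\phi_{\epsilon_u,N_u}$, so formula \eqref{eq:defi_phi} reads $\phi_q=N_u+1-F_{p_q}+\bar E_{M_q}$; adding $m$ to each part simply replaces $N_u$ by $N_u+m$ throughout, giving the claimed equality. Then, invoking Proposition~\ref{prop:bijection_str} (the bijection $\str$) together with the complementation $T_{i,j}\mapsto N+1-T_{i,j}$ that identifies $\epsilon_u$-partitions with $\epsilon_u$-tableaux, I would conclude
\[
\nu_u(m)=|\mathrm{SSYT}(\lambda,\phi_{\epsilon_u,N_u+m})|=|\mathcal{T}(\lambda,\epsilon_u,N_u+m)|=\Omega(\lambda,\epsilon_u,N_u+m).
\]
The applicability of Proposition~\ref{prop:bijection_str} here hinges on $(\epsilon_u,N_u+m)$ satisfying \eqref{eq:Nmin}, which holds because $N_u+m\geq N_u=\max_q(F_{p_q}+E_{M_q})$ by the very definition of $N_u$.

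The remaining step is a re-indexing of the generating function. Setting $N=N_u+m$ and factoring out $t^{-N_u}$ turns $\sum_{j\geq 0}\nu_u(j)t^j$ into $t^{-N_u}\sum_{N\geq N_u}\Omega(\lambda,\epsilon_u,N)t^{N}$. To then apply \eqref{eq:partitions_to_descents}, I would observe that $\Omega(\lambda,\epsilon_u,N)=0$ for $0\leq N<N_u$, since in that range \eqref{eq:Nmin} fails so that $\mathcal{T}(\lambda,\epsilon_u,N)=\emptyset$; this lets the summation be extended down to $N=0$. Plugging in \eqref{eq:partitions_to_descents} with $\omega_u=\omega_{\epsilon_u}$ and $|\lambda|=n-1$ produces
\[
\sum_{j\geq 0}\nu_u(j)t^j=t^{-N_u}\cdot\frac{\sum_{T\in \mathrm{SYT}(\lambda)}t^{\dsc(T;\omega_u)}}{(1-t)^{n}}=\frac{\sum_{T\in \mathrm{SYT}(\lambda)}t^{\dsc(T;\omega_u)-N_u}}{(1-t)^{n}},
\]
the desired identity.

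Since the heavy lifting has already been done in Proposition~\ref{prop:bijection_str} and Lemma~\ref{lem:signature_from_vexillary}, no single step is genuinely difficult; the point requiring the most care is the flag identity $m+\phi(u)=\phi_{\epsilon_u,N_u+m}$ and the realization that the choice $N_u=\max_q(F_{p_q}+E_{M_q})$ does double duty \textemdash{} it both guarantees \eqref{eq:Nmin} for all $N\geq N_u$, so that $\str$ is available, and pinpoints the exact threshold below which $\Omega$ vanishes, which is precisely what permits extending the sum to $N=0$ and invoking \eqref{eq:partitions_to_descents}.
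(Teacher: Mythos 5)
Your proposal is correct and follows essentially the same route as the paper's own proof: rewrite $\nu_u(j)$ as $|\mathrm{SSYT}(\lambda;\phi_{\epsilon_u,j+N_u})|=\Omega(\lambda,\epsilon_u,j+N_u)$ via Lemma~\ref{lem:signature_from_vexillary} and Proposition~\ref{prop:bijection_str}, shift the index using the vanishing of $\Omega$ below $N_u$, and apply \eqref{eq:partitions_to_descents}. Your added verifications of the flag identity $m+\phi(u)=\phi_{\epsilon_u,N_u+m}$ and of the hypothesis \eqref{eq:Nmin} are details the paper leaves implicit, but the argument is the same.
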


\begin{proof}
We have \[\nu_u(j)=|\mathrm{SSYT}(\lambda;j+\phi(u))|=|\mathrm{SSYT}(\lambda;j+\phi_{\epsilon_u,N_u})|=|\mathrm{SSYT}(\lambda;\phi_{\epsilon_u,j+N_u})|,\] and so by Proposition~\ref{prop:bijection_str} we get \[\nu_u(j)=|\mathcal{T}(\lambda;\epsilon_u,j+N_u)|=\Omega(\lambda;\epsilon_u,j+N_u),\]
and therefore
\[\sum_{j\geq 0}\nu_u(j)t^j=\sum_{j\geq 0}\Omega(\lambda;\epsilon_u,j+N_u)t^j=t^{-N_u}\sum_{j\geq 0}\Omega(\lambda;\epsilon_u,j)t^j,\]
because $\Omega(\lambda;\epsilon_u,j)=0$ for $j<N_u$.
From \eqref{eq:partitions_to_descents} the desired identity follows.
\end{proof}

Comparing the content of Theorem~\ref{thm:vexillary} with \eqref{eq:summation_indecomposable} from Theorem~\ref{thm:quasiindecomposable} gives the following as an immediate corollary:

\begin{corollary}
\label{cor:vexillary}
We keep the notations from Theorem~\ref{thm:vexillary}. Then $a_{u^{[m]}}$ is equal to the number of tableaux $T\in \mathrm{SYT}(\lambda)$ with $m+N_u$ $\omega_{\epsilon_u}$-descents.
\end{corollary}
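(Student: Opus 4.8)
The plan is simply to equate the two rational-function expressions for the generating series $\sum_{j\geq 0}\nu_u(j)t^j$ that have already been established. On one hand, Theorem~\ref{thm:quasiindecomposable}, specifically \eqref{eq:summation_indecomposable}, tells us this series equals $\frac{\sum_{m=0}^{n-p-1} a_{u^{[m]}}t^m}{(1-t)^n}$. On the other hand, Theorem~\ref{thm:vexillary} tells us it equals $\frac{\sum_{T\in\mathrm{SYT}(\lambda)}t^{\dsc(T;\omega_u)-N_u}}{(1-t)^n}$. Since both are expressions for the same formal power series and share the denominator $(1-t)^n$, multiplying through by $(1-t)^n$ yields an identity of polynomials
\[
\sum_{m=0}^{n-p-1} a_{u^{[m]}}t^m = \sum_{T\in\mathrm{SYT}(\lambda)}t^{\dsc(T;\omega_u)-N_u}.
\]

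First I would note that the right-hand side is genuinely a polynomial: the exponents $\dsc(T;\omega_u)-N_u$ are all nonnegative, because $\Omega(\lambda;\epsilon_u,j)=0$ for $j<N_u$, the very fact used in the proof of Theorem~\ref{thm:vexillary}. Then I would regroup the right-hand side according to the number of $\omega_{\epsilon_u}$-descents, so that the coefficient of $t^m$ becomes precisely the cardinality $|\{T\in\mathrm{SYT}(\lambda): \dsc(T;\omega_u)=m+N_u\}|$, via the bookkeeping substitution $d=m+N_u$ relating descent counts to powers of $t$.

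Finally, extracting the coefficient of $t^m$ from both sides gives
\[
a_{u^{[m]}} = |\{T\in\mathrm{SYT}(\lambda): \dsc(T;\omega_u)=m+N_u\}|,
\]
which is exactly the asserted combinatorial interpretation. There is no genuine obstacle here: the only points meriting a moment's care are the legitimacy of cancelling the common denominator to pass to an equality of numerator polynomials---valid because equality of power series with identical denominators forces equality of numerators---and the nonnegativity of the exponents just noted. Both are routine, which is precisely why the statement arises as an immediate corollary of the two cited theorems.
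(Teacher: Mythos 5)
Your proof is correct and follows essentially the same route as the paper, which obtains the corollary precisely by comparing the generating-function identity of Theorem~\ref{thm:vexillary} with \eqref{eq:summation_indecomposable} and extracting coefficients. The extra care you take in checking that the exponents $\dsc(T;\omega_u)-N_u$ are nonnegative is a reasonable (if implicit in the paper) sanity check, but the argument is the same.
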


\begin{example}
We follow up on Example~\ref{ex:signature_from_vexillary}. The next figure depicts a possible $\omega_{\epsilon_u}$.
	$$
	\ytableausetup{mathmode,boxsize=1em}
	\begin{ytableau}
		5& 6 & 7\\ 3 & 4\\1 & 2\\8
	\end{ytableau}
	$$
	Here are the three standard Young tableaux with exactly two $\omega_{\epsilon}$-descents, coming from the shaded boxes.
	$$
	\begin{ytableau}
	1& *(gray!50)2 & 7\\ 3 & *(gray!50)4\\5 & 6\\8
	\end{ytableau}\hspace{10mm}
	\begin{ytableau}
	1& *(gray!50)2 & *(gray!50)5\\ 3 & 4\\6 & 7\\8
	\end{ytableau}\hspace{10mm}
	\begin{ytableau}
	1& 2 & *(gray!50)3\\ 4 &*(gray!50) 5\\6 & 7\\8
	\end{ytableau}
	$$
	It follows that $a_{u^{[0]}}=a_{346215789}=3$. The reader may further verify that
	\[
		\sum_{j\geq 0}\nu_u(j)t^j=\frac{3+24t+34t^2+9}{(1-t)^{9}}.
	\]

	To further demonstrate that we have a family of combinatorial interpretations depending on the choice of $\epsilon_u$ (and $\omega_u$), an alternative legitimate choice for $u=346215$ is the signature $(1,1,1),(1,0)$, for which $N_u$ equals $\max{\{1+0,1+2,0+3\}}=3$.
	Suppose we pick $\omega_{u}$ to read $738\hspace{2mm} 62\hspace{2mm}51 \hspace{2mm}4$ going top to bottom, left to right in the Young diagram of shape $\lambda$. Here are the three tableaux $\mathrm{SYT}(\lambda)$ with exactly three $\omega_{u}$-descents.
	$$
	\begin{ytableau}
	*(gray!50)1& 2 & 8\\ *(gray!50)3 & 4\\*(gray!50)5 & 6\\7
	\end{ytableau}\hspace{10mm}
	\begin{ytableau}
	*(gray!50)1& 2 & *(gray!50)6\\ *(gray!50)3 & 4\\5 & 7\\8
	\end{ytableau}\hspace{10mm}
	\begin{ytableau}
	*(gray!50)1& 2 & *(gray!50)4\\ *(gray!50)3 & 5\\6 & 7\\8
	\end{ytableau}
	$$
\end{example}
Let us revisit the Grassmannian and dominant cases in light of our treatment of the vexillary case.
We borrow notation that we have used throughout this section.
\begin{enumerate}
	\item If $u$ is indecomposable Grassmannian, then the signature $\phi\coloneqq \phi(u)$ satisfies $\phi_q-\phi_{q-1}=0$. 	It follows that we may pick $(e_1,\dots,e_{l-1})=(1^{l-1})$ and $(f_1,\dots,f_{\lambda_1-1})=(0^{\lambda_1-1})$.
	If we pick $\omega_{\epsilon}$ to correspond to the  filling of $\lambda\coloneqq \lambda(w)$ where we place integers from $1$ through $|\lambda|$ from bottom to top and left to right, we see that an $\omega_{\epsilon}$-descent is the same as a traditional descent in $\mathrm{SYT}$, thereby recovering Theorem~\ref{thm:grassmannian}.
	\item Next consider $u$ dominant.  One can see that $(e_1,\dots,e_{l-1})=(0^{l-1})$ and $(f_1,\dots,f_{\lambda_1-1})=(0^{\lambda_1-1})$ give a valid signature. We pick the natural labeling where we place integers from $1$ through $|\lambda|$ from  top to bottom and left to right, so that an $\omega_{\epsilon}$-descent is a traditional ascent of an SYT.
\end{enumerate}

We remark that {\em shifted dominant} permutations of the type $1\times u$ for $u$ dominant occur in a number of articles \cite{Ber18,Esc16,woo04}.\smallskip

Finally, let us briefly sketch why the invariance properties of Proposition~\ref{prop:stability} are apparent in this combinatorial interpretation.
Fix $\lambda\vdash n-1$, and let $H_{q}\coloneqq m_{q+1}+p_{q}-p_{q+1}$ for $q=1,\ldots,r-1$ using previously introduced notation.
Let $u\in \sgrp_{p+1}$ be an indecomposable vexillary with shape $\lambda$ and flag differences $\delta_q\coloneqq\phi_{q+1}-\phi_{q}$ for ${q=1,\ldots,r-1}$.
Define $\bar{u}=w_o^{p+1}uw_o^{p+1}$ where $w_0^{p+1}$ denotes the longest word in $\sgrp_{p+1}$. Then it follows from \cite[Formulas (1.41) and (1.42)]{Macdonald} that the indecomposable vexillary permutations $\bar{u}$ and $u^{-1}$ are characterized as follows:
\begin{itemize}
\item $\bar{u}$ has shape $\lambda'$ and flag differences  $(\delta_{r-q})_{q=1,\ldots,r-1}$;
\item $u^{-1}$ has shape $\lambda'$ and flag differences $(H_{r-q}-\delta_{r-q})_{q=1,\ldots,r-1}$.
\end{itemize}
We fix a signature $\epsilon_u=(e,f)$ and a labeling $\omega_u$ for $u$ as in Theorem~\ref{thm:vexillary}. Then the following claims are easily checked:
\begin{itemize}
\item A valid signature for $\bar{u}$ is given by $\epsilon_{\bar{u}}\coloneqq (f,e)$ on $\lambda'$. A compatible $\omega_{\bar{u}}$ is defined by $\omega_{\bar{u}}(i,j)\coloneqq\omega_{u}(j,i)$ for any $(i,j)\in\lambda'$.
\item  A valid signature for $u^{-1}$ is given by $\epsilon_{\bar{u}}\coloneqq (1-f,1-e)$ on $\lambda'$ where naturally $(1-f)_j=1-f_j$ and $(1-e)_i=1-e_i$. A compatible $\omega_{u^{-1}}$ is defined by $\omega_{u^{-1}}=n-\omega_{\bar{u}}$.
\end{itemize}
We leave it to the interested reader to show the invariance properties of Proposition~\ref{prop:stability} from the combinatorial interpretation afforded by Corollary~\ref{cor:vexillary} (the invariance under conjugation by $w_o$ is more involved).

\section{Klyachko's original formula and $a_w$ for other types}
\label{sec:klyachko_formula}

While the majority of this article is concerned with type $A$, we now deal with any Lie type $\Phi$. We want to describe the class of the permutahedral variety in type $\Phi$ in terms of certain mixed $\Phi$-Eulerian numbers in a manner akin to Theorem~\ref{thm:positive_formula}.

The starting point is again Klyachko's work \cite{Kly85,Kly95}. We state and give  Klyachko's beautiful ``Macdonald-like formula''\footnote{Compare equation~\eqref{eq:macdonald} and the equality in Theorem~\ref{th:KM}; see \cite{NT21}.} which was first announced in ~\cite{Kly85}, and appeared with a proof some time later in~\cite{Kly95}. Since the latter is in Russian, and for the sake of completeness, we reproduce  Klyachko's proof here with some slight improvement. \medskip

\subsection{Klyachko's theorems}

Fix $G$ a complex connected reductive group, $B$ a Borel subgroup and $T$ a maximal torus inside $B$. Let $\Phi$ be the root system of rank $r$, and $W$ be the Weyl group $W\coloneqq N_G(T)/T$. Let $\Delta=\{\alpha_1,\dots,\alpha_{r}\}$ denote the set of simple roots, $\Pi$ the corresponding set of positive roots. Recall that $\Pi$ is in one-to-one correspondence with the set of reflections of $W$, which we note $\alpha\mapsto s_\alpha$. 
We denote by $\langle \cdot,\cdot\rangle$ the Killing form. We say that $i$ is a descent of $w\in W$ if $\ell(ws_{\alpha_i})=\ell(w)-1$, and let $\des(w)$ be the set of descents of $w$.

The cohomology ring $H^*(G/B,\bQ)$ has a basis given by Schubert classes $\sigma_w$ as $w$ ranges over elements in $W$. Denote by $X=X(\Phi)\subset G/B$ the closure of a generic orbit of the maximal torus $T \subset G$: $X$ is \emph{the permutahedral variety of type $\Phi$}. It is a smooth projective variety of dimension $r$. It can be constructed alternatively as the toric variety attached to the Coxeter fan of type $\Phi$.

Consider the algebra homomorphism $i^*:H^*(G/B, \bQ) \to H^*(X,\bQ)$ induced from the inclusion $X=X\subset G/B$ . Klyachko \cite{Kly85,Kly95} shows that the image of  $i^*$ coincides with the algebra of invariants $H^*(X,\bQ)^W$, and gives a presentation for this algebra as follows:  Denote by $\mathcal{L}_{\Lambda}$ the line bundle on $G/B$ induced by a weight $\Lambda$, that is, a character $\Lambda:B\to \bC^*$. Let $[\Lambda] =  c_1(\mathcal{L}_{\Lambda}|_X)\in H^2(X,\bQ)$ be the first Chern class of the restriction of $L_\Lambda$ to $X$. 
 Finally denote by $\Lambda_i, i=1,\dots r$ the fundamental weights of $\mathfrak{g}$.

\begin{theorem}[\cite{Kly85,Kly95}]
\label{th:invariant_cohomology}
The algebra $H^*(X, \bQ)^{W}$ is generated by the classes $[\Lambda_i]$, $i=1,\ldots,r$ subject only to the
quadratic relations
\begin{equation}
[\Lambda_i][\alpha_i]=0,  \text{ for } 1\leq i\leq n-1.
\end{equation}
It has dimension $2^r$, with basis given by the squarefree monomials in the generators $[\Lambda_i]$.
\end{theorem}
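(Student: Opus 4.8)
The plan is to present $H^*(X,\bQ)^W$ as an explicit graded Artinian complete intersection, combining the (already quoted) fact that $\operatorname{im}(i^*)=H^*(X)^W$ with a short commutative-algebra argument. First I would record generation: since $H^*(G/B,\bQ)$ is generated as a $\bQ$-algebra by its degree-two part, which is spanned by the fundamental weight classes (Borel presentation), its image $H^*(X)^W$ is generated by $[\Lambda_1],\dots,[\Lambda_r]$. Next, because $\Lambda\mapsto[\Lambda]=c_1(\mathcal L_\Lambda|_X)$ is \emph{linear} in $\Lambda$, the Cartan expansion $\alpha_i=\sum_j C_{ji}\Lambda_j$ (with $C_{ji}=\langle\alpha_i,\alpha_j^\vee\rangle$ and $C_{ii}=2$) gives $[\alpha_i]=\sum_j C_{ji}[\Lambda_j]$, so each relation $[\Lambda_i][\alpha_i]=0$ is the explicit quadric $2[\Lambda_i]^2+\sum_{j\ne i}C_{ji}[\Lambda_i][\Lambda_j]=0$. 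In type $A$ this reads $2u_i^2=u_iu_{i-1}+u_iu_{i+1}$, recovering the relations of $\D_n$ from Theorem~\ref{thm:klyachko}, which is a useful consistency check.

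The one genuinely geometric input is the vanishing $[\Lambda_i][\alpha_i]=0$ in $H^*(X)$, and I expect this to be the main obstacle since everything else is formal. I would establish it by a local computation in the Chow/cohomology ring of the toric variety $X$ of the Coxeter fan: the nef class $[\Lambda_i]$ is affine-linear across every wall of type $j\ne i$ (because $\Lambda_i$ is invariant under the parabolic subgroup generated by the $s_j$, $j\ne i$), while $[\alpha_i]$ is the class detecting the type-$i$ walls, and the product of the two classes then vanishes by a computation on the fan. This is precisely Klyachko's relation.

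With the relations in hand, consider $A=\bQ[z_1,\dots,z_r]/(z_1\ell_1,\dots,z_r\ell_r)$ where $\ell_i=\sum_j C_{ji}z_j$, together with the surjection $\Psi\colon A\twoheadrightarrow H^*(X)^W$ sending $z_i\mapsto[\Lambda_i]$. The key algebraic point is that $z_1\ell_1,\dots,z_r\ell_r$ is a regular sequence: their common zero locus in $\mathbb A^r$ is $\{0\}$, because a common zero forces, on the coordinate subspace where $z_i=0$ for $i$ in some set $S$, the linear system $C_{TT}\,z_T=0$ on the complementary set $T$, and every principal submatrix $C_{TT}$ of a finite-type Cartan matrix is nonsingular, so $z_T=0$. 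Hence $A$ is a graded Artinian Gorenstein complete intersection of $r$ quadrics in $r$ variables, with Hilbert series $(1+t)^r$; in particular $\dim_\bQ A=2^r$, $\dim_\bQ A_k=\binom rk$, the socle lies in top degree $r$ and is spanned by $z_1\cdots z_r$, and (reducing via $z_i^2=-\tfrac12\sum_{j\ne i}C_{ji}z_iz_j$, which terminates by Artinianity) the squarefree monomials span $A$ and hence form a basis.

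Finally I would promote $\Psi$ to an isomorphism by proving injectivity. If $\ker\Psi\ne 0$ then, $A$ being Artinian Gorenstein, $\ker\Psi$ contains the one-dimensional socle, which forces $[\Lambda_1]\cdots[\Lambda_r]=\Psi(z_1\cdots z_r)=0$. But $\int_X[\Lambda_1]\cdots[\Lambda_r]$ equals the normalized mixed volume of the orbit polytopes $P_{\Lambda_1},\dots,P_{\Lambda_r}$, which is strictly positive since each $P_{\Lambda_i}$ is full-dimensional; thus $[\Lambda_1]\cdots[\Lambda_r]\ne 0$ and $\Psi$ is injective. Therefore $\Psi$ is an isomorphism, which simultaneously yields the presentation by the quadratic relations, the dimension $2^r$, and the squarefree-monomial basis, completing the proof of Theorem~\ref{th:invariant_cohomology}.
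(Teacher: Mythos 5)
The paper does not actually prove this statement: Theorem~\ref{th:invariant_cohomology} is quoted from Klyachko \cite{Kly85,Kly95} with no proof supplied (only Theorem~\ref{th:KM} is reproved in Section~\ref{sec:klyachko_formula}). So your proposal has to stand on its own, and while its overall architecture is sensible and several pieces are genuinely nice --- the identification $[\alpha_i]=\sum_j C_{ji}[\Lambda_j]$ recovering the relations of $\D_n$, the complete-intersection argument via nonsingularity of principal submatrices of a finite-type Cartan matrix (hence Hilbert series $(1+t)^r$ and $\dim A=2^r$), and the injectivity of $\Psi$ via the Gorenstein socle together with positivity of the mixed volume of the full-dimensional orbit polytopes $P_{\Lambda_i}$ --- there are two genuine gaps.

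First, the only geometric input, the vanishing $[\Lambda_i][\alpha_i]=0$ in $H^*(X)$, is exactly the content of Klyachko's theorem, and you do not prove it: ``the product of the two classes then vanishes by a computation on the fan'' defers the entire point. To close this you would need to actually carry out the computation in the Danilov--Jurkiewicz presentation of $H^*(X)$ for the Coxeter fan, expressing $[\Lambda_i]$ and $[\alpha_i]$ as explicit sums of ray classes $D_{w\Lambda_j}$ and invoking the Stanley--Reisner relations; as written this step is an assertion, not an argument. Second, your claim that the squarefree monomials span $A$ because the rewriting $z_i^2\mapsto -\tfrac12\sum_{j\neq i}C_{ji}z_iz_j$ ``terminates by Artinianity'' does not work: the rewriting preserves degree, so Artinianity is irrelevant, and it can cycle (already in rank $2$ one has $z_1^2z_2\to \tfrac12 z_1z_2^2\to \tfrac14 z_1^2z_2$, and one must solve a linear equation whose coefficient must be shown to differ from $1$). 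Since you already know $\dim A_k=\binom{r}{k}$, what is missing is either a spanning or an independence argument for the $\binom{r}{k}$ squarefree monomials in degree $k$; a clean route is to compare with the $W$-invariant part of the toric cohomology, which has dimension $2^r$ with a basis indexed by $W$-orbits of cones of the Coxeter fan (equivalently, subsets of simple roots), and to match these orbit classes against the $[\Lambda_S]$. Until those two steps are filled in, the proposal is an outline rather than a proof.
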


In  type $A$ this recovers the presentation for $\D_n$ given in Section~\ref{sub:Klyachko}, by writing the roots in terms of fundamental weights.\smallskip

Given $w\in W$, let $\mathrm{Red}(w)$ denote the set of reduced words for $w$: $i_1\dots i_m\in\mathrm{Red}(w)$  if $s_{i_1}\dots s_{i_m}$ is a reduced expression for $w$.
The next result describes the image of the Schubert class $\sigma_w$.  
 
\begin{theorem}[\cite{Kly85,Kly95}]
\label{th:KM}
For $w\in W$, we have the identity in $H^*(X, \bQ)^{W}$
\begin{equation}
\label{eq:KM}
i^*(\sigma_w)=\frac{1}{\ell(w)!}\sum_{i_1\dots i_{\ell(w)}\in \mathrm{Red}(w)}[\Lambda_{i_1}]\cdots [\Lambda_{i_{\ell(w)}}].
\end{equation}
\end{theorem}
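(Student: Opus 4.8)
The map $i^*\colon H^*(G/B,\bQ)\to H^*(X,\bQ)^{W}$ is a ring homomorphism, so my plan is to induct on $\ell(w)$ and compare both sides of \eqref{eq:KM} through a single recursion. First I would settle the base case $\ell(w)=1$: the fundamental weight $\Lambda_i$ satisfies $c_1(\mathcal L_{\Lambda_i})=\sigma_{s_i}$ in $H^2(G/B)$, whence $i^*(\sigma_{s_i})=c_1(\mathcal L_{\Lambda_i}|_X)=[\Lambda_i]$, which is exactly the right-hand side of \eqref{eq:KM} for a simple reflection. Writing $R_w\coloneqq \frac{1}{\ell(w)!}\sum_{\mathbf i\in\reduced(w)}[\Lambda_{i_1}]\cdots[\Lambda_{i_{\ell(w)}}]$ for the candidate class, the goal becomes $i^*(\sigma_w)=R_w$ for all $w\in W$.

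The combinatorial side obeys a transparent recursion: sorting the reduced words of $w$ by their last letter, and using that the $[\Lambda_j]$ commute, yields $\ell(w)\,R_w=\sum_{j\in\des(w)}R_{ws_j}\,[\Lambda_j]$, where $\des(w)$ is the set of right descents. Since $R$ trivially satisfies this recursion and the base case matches, the induction will close the moment I establish that $i^*(\sigma_w)$ satisfies the \emph{same} descent recursion, namely
\begin{equation*}
\ell(w)\,i^*(\sigma_w)=\sum_{j\in\des(w)} [\Lambda_j]\,i^*(\sigma_{ws_j})
= i^*\Big(\sum_{j=1}^{r}\sigma_{s_j}\cup\partial_j\sigma_w\Big),
\end{equation*}
where $\partial_j$ denotes the divided-difference operator on $H^*(G/B,\bQ)$ with $\partial_j\sigma_w=\sigma_{ws_j}$ for $j\in\des(w)$ and $0$ otherwise, and I have used $i^*(\sigma_{s_j})=[\Lambda_j]$. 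Indeed, granting this identity, $\delta_w\coloneqq R_w-i^*(\sigma_w)$ satisfies $\ell(w)\,\delta_w=\sum_{j\in\des(w)}\delta_{ws_j}[\Lambda_j]$, whose right-hand side vanishes by the inductive hypothesis; since we work over $\bQ$ this forces $\delta_w=0$, completing the induction.

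The crux, and the step I expect to be the genuine obstacle, is precisely this Euler-type identity, which does \emph{not} already hold in $H^*(G/B,\bQ)$. Expanding $\sum_{j\in\des(w)}\sigma_{s_j}\cup\sigma_{ws_j}$ through the Chevalley formula $\sigma_{s_j}\cup\sigma_v=\sum_{\alpha\in\Pi,\ \ell(vs_\alpha)=\ell(v)+1}\langle\Lambda_j,\alpha^\vee\rangle\,\sigma_{vs_\alpha}$, the diagonal choices $\alpha=\alpha_j$ contribute $\sigma_w$ with total multiplicity only $|\des(w)|$, while there remain off-diagonal terms $\sigma_{ws_js_\alpha}$ of the same length as $w$. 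Thus both the missing multiplicity $\ell(w)-|\des(w)|$ and the cancellation of the off-diagonal terms can materialize only after applying $i^*$, that is, modulo $\ker i^*$. This is exactly where the quadratic relations of Theorem~\ref{th:invariant_cohomology} must be fed in: expanding $[\alpha_j]=\sum_k\langle\alpha_j,\alpha_k^\vee\rangle[\Lambda_k]$ and invoking $[\Lambda_j][\alpha_j]=0$ rewrites each off-diagonal contribution in terms of the generators, and the hard part will be to organize this bookkeeping so that those contributions collapse onto the required $(\ell(w)-|\des(w)|)\,i^*(\sigma_w)$. Carrying out this reduction cleanly is the entire content of Theorem~\ref{th:KM}, and is the point where Klyachko's computation, together with the slight improvement we advertise, does its work.

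As a safeguard I would keep in reserve a more geometric route to the same descent identity: since $X$ is smooth and projective the Poincar\'e pairing on $H^*(X,\bQ)$ is nondegenerate, and averaging over $W$ shows it remains nondegenerate on $H^*(X,\bQ)^{W}$; one can then test the desired identity against the squarefree monomials $\prod_{k\in S}[\Lambda_k]$ and use the projection formula $\int_X i^*(\gamma)=\int_{G/B}\gamma\cup[X]$ to reduce everything to intersection numbers of $[X]$ with Schubert classes. Whichever bookkeeping proves cleaner, the normalization by $1/\ell(w)!$ is confirmed at the end by specializing to type $A$, where \eqref{eq:KM} recovers Macdonald's reduced-word identity \eqref{eq:macdonald} through the presentation of $\D_n$ and thereby reconnects the argument to the type-$A$ results of the paper.
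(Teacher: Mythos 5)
Your setup is sound: the base case $i^*(\sigma_{s_i})=[\Lambda_i]$ is correct, the descent recursion $\ell(w)\,R_w=\sum_{j\in\des(w)}R_{ws_j}[\Lambda_j]$ for the reduced-word sums is indeed transparent (split reduced words by their last letter), and the induction would close once the identity $\ell(w)\,i^*(\sigma_w)=\sum_{j\in\des(w)}[\Lambda_j]\,i^*(\sigma_{ws_j})$ is established in $H^*(X,\bQ)^W$. But that identity is precisely where you stop: you describe what must happen (the off-diagonal Chevalley terms $\sigma_{ws_js_\beta}$ must collapse, modulo $\ker i^*$, onto $(\ell(w)-|\des(w)|)\,i^*(\sigma_w)$), you correctly observe that this cannot hold before applying $i^*$ and must use the relations $[\Lambda_j][\alpha_j]=0$, and then you defer the actual cancellation to ``Klyachko's computation.'' Since you yourself note that this step is the entire content of the theorem, the proposal as written has a genuine gap: the crux is identified but not proved. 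The reserve route via Poincar\'e duality on $H^*(X,\bQ)^W$ and the projection formula is likewise only a sketch \textemdash{} it reduces the identity to intersection numbers of $[X]$ with Schubert classes, which are not computable without essentially already knowing the expansion the theorem is meant to produce.

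For comparison, the paper closes exactly this gap by proving the dual statement (Proposition~\ref{prop:induction}): the normalized reduced-word sums $P(w)/\ell(w)!$ satisfy the Chevalley multiplication rule $[\Lambda]\cdot\frac{P(w)}{\ell(w)!}=\sum_{\beta}\langle\check\beta,\Lambda\rangle\frac{P(ws_\beta)}{(\ell(w)+1)!}$ in $H^*(X,\bQ)^W$. This is shown by induction on $\ell(w)$: one splits $P(w)$ by the last letter, uses $u_i[\alpha_i]=0$ to replace $[\Lambda]$ by $[s_i\Lambda]$, changes variables $\beta=s_i(\gamma)$, and then invokes a root-system lemma (Lemma~\ref{lem:easy?}, essentially from Bernstein--Gelfand--Gelfand) to reindex the double sum; the ``missing multiplicity'' you worry about is exactly what that lemma accounts for. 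Since $i^*(\sigma_w)$ satisfies the same Chevalley rule by pulling back the Monk--Chevalley formula from $H^*(G/B,\bQ)$, and the two families agree on simple reflections, they coincide. If you want to complete your version, you will need an argument of the same nature \textemdash{} the quadratic relations plus a careful pairing of descents of $w$ with reflections lengthening $ws_j$ \textemdash{} to effect the off-diagonal cancellation you describe; it does not come for free.
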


Klyachko establishes this result by verifying that both sides satisfy the same recursion. We now give a simplified version of this argument \footnote{Klyachko's proof uses galleries between any two chambers in the Coxeter arrangement. It is actually enough to consider reduced expressions of $w$, that is minimal galleries starting from the fundamental chamber.}. We need a couple of preliminary results:

\begin{lemma}
\label{lem:easy?}
For any $w\in W$, define 
\begin{align*}
A_w&=\{(\alpha,\beta)\suchthat \alpha\in \des(w),\beta\in \Pi\setminus \{\alpha\}, \ell(ws_{\beta}s_{\alpha})=\ell(w)\},\\
B_w&=\{(\alpha,\beta)\suchthat \beta\in \Pi,\alpha\in \des(ws_{\beta}), \ell(ws_{\beta})=\ell(w)+1\}.
\end{align*}
Then $A_w\subset B_w$ and $B_w\setminus A_w=\{(\alpha,\alpha)\suchthat \alpha\notin \des(w)\}$.
\end{lemma}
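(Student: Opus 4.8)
The plan is to rely on two standard facts about the length function on a Weyl group: the \emph{root criterion}, namely that for $w\in W$ and a positive root $\gamma\in\Pi$ one has $\ell(ws_\gamma)>\ell(w)$ if and only if $w(\gamma)\in\Pi$; and the fact that, for $\alpha$ simple, $s_\alpha$ fixes $\Pi\setminus\{\alpha\}$ setwise, together with the conjugation identity $s_\alpha s_\beta s_\alpha=s_{s_\alpha(\beta)}$. Throughout I identify $\des(w)$ with the set of simple roots $\alpha$ with $\ell(ws_\alpha)=\ell(w)-1$. The single computation driving both inclusions is
\[
w s_\beta s_\alpha=(w s_\alpha)\,s_{s_\alpha(\beta)},
\]
valid because $s_\beta s_\alpha=s_\alpha s_{s_\alpha(\beta)}$, and where $\gamma\coloneqq s_\alpha(\beta)\in\Pi$ whenever $\beta\in\Pi\setminus\{\alpha\}$.

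For the inclusion $A_w\subseteq B_w$, I would start from $(\alpha,\beta)\in A_w$ and set $u\coloneqq ws_\alpha$, so that $\ell(u)=\ell(w)-1$ and $u(\alpha)>0$. The displayed identity gives $ws_\beta s_\alpha=us_\gamma$, and the hypothesis $\ell(ws_\beta s_\alpha)=\ell(w)=\ell(u)+1$ forces $u(\gamma)>0$ by the root criterion. Since $w(\beta)=(us_\alpha)(\beta)=u(s_\alpha(\beta))=u(\gamma)>0$, the root criterion yields $\ell(ws_\beta)>\ell(w)$; as $s_\alpha$ is simple and $\ell(ws_\beta s_\alpha)=\ell(w)$, this pins down $\ell(ws_\beta)=\ell(w)+1$. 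Then $\ell(ws_\beta s_\alpha)=\ell(ws_\beta)-1$ exhibits $\alpha\in\des(ws_\beta)$, so $(\alpha,\beta)\in B_w$.

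For the description of $B_w\setminus A_w$, I would split on whether $\beta=\alpha$. When $\beta=\alpha$ is simple, the condition $\alpha\in\des(ws_\alpha)$ is automatic once $\ell(ws_\alpha)=\ell(w)+1$, so $(\alpha,\alpha)\in B_w$ holds exactly when $\ell(ws_\alpha)=\ell(w)+1$, i.e.\ when $\alpha\notin\des(w)$; such pairs never lie in $A_w$, which requires $\beta\neq\alpha$. The substantive case, which I expect to be the main obstacle, is to show that every $(\alpha,\beta)\in B_w$ with $\beta\neq\alpha$ already lies in $A_w$; since the condition $\ell(ws_\beta s_\alpha)=\ell(w)$ is built into $B_w$ and $\beta\neq\alpha$, this reduces to proving $\alpha\in\des(w)$. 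Here I would run the previous computation in reverse: set $u'\coloneqq ws_\beta s_\alpha$ with $\ell(u')=\ell(w)$, observe $u's_\alpha=ws_\beta$ and $ws_\alpha=u's_\gamma$ from the displayed identity, and compute $u'(\gamma)=(u's_\alpha)(\beta)=(ws_\beta)(\beta)=-w(\beta)$. The constraint $\ell(ws_\beta)=\ell(w)+1$ forces $w(\beta)>0$, hence $u'(\gamma)<0$ and $\ell(ws_\alpha)=\ell(u's_\gamma)=\ell(u')-1=\ell(w)-1$, giving $\alpha\in\des(w)$ as wanted. Combining the two cases yields $B_w\setminus A_w=\{(\alpha,\alpha)\suchthat \alpha\notin\des(w)\}$.
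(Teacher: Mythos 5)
Your proof is correct. Note that the paper does not actually write out an argument for this lemma: it simply defers to \cite[Lemma 2.4]{BGG73}, so there is no in-paper proof to compare against. Your derivation --- the root criterion $\ell(ws_\gamma)>\ell(w)\iff w(\gamma)\in\Pi$ combined with the twist identity $ws_\beta s_\alpha=(ws_\alpha)s_{s_\alpha(\beta)}$ and the fact that $s_\alpha$ permutes $\Pi\setminus\{\alpha\}$ --- is exactly the standard Coxeter-theoretic argument that reference contains, and all the steps check out (the only cosmetic point is that $\ell(u's_\gamma)=\ell(u')-1$ at the end is pinned down by $u's_\gamma=ws_\alpha$ with $\alpha$ simple, not by the root criterion alone, which your final equality already implicitly uses).
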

\begin{proof}
This follows from standard arguments in Coxeter theory, see \cite[Lemma 2.4]{BGG73} for a proof.
\end{proof}

Let $P(w)=\displaystyle{\sum_{i_1\dots i_{\ell(w)}\in \mathrm{Red}(w)}[\Lambda_{i_1}]\cdots [\Lambda_{i_{\ell(w)}}]}$, the sum in the right hand side in \eqref{eq:KM}.
\begin{proposition}
\label{prop:induction}
For any weight $\Lambda$ and any $w\in W$, 
we have in $H^*(X, \bQ)^{W}$
\[[\Lambda][P(w)]=\frac{1}{(\ell(w)+1)}\sum_{\substack{\beta\in \Pi\\ \ell(ws_{\beta})=\ell(w)+1}}\langle \check{\beta},\Lambda\rangle [P(ws_{\beta})]
,\]

where $\check{\beta}$ denotes the coroot attached to $\beta$. 
\end{proposition}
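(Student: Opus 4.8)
The plan is to prove the identity by induction on $\ell=\ell(w)$, turning it into a combinatorial statement about reduced words that is organized by Lemma~\ref{lem:easy?}, with the defining quadratic relations of $H^*(X,\bQ)^W$ doing the essential work at the end. The two tools I would set up first are the elementary \emph{right recursion}
\[
P(u)=\sum_{\alpha\in\des(u)}P(us_\alpha)\,[\Lambda_{\alpha}],
\]
which is immediate from the definition of $P$ (every reduced word of $u$ ends in a unique letter $\alpha\in\des(u)$, and deleting it gives a reduced word of $us_\alpha$; commutativity lets me move $[\Lambda_\alpha]$ to the end), and the linearity of $\Lambda\mapsto[\Lambda]$, written as $[\Lambda]=\sum_i\langle\check{\alpha}_i,\Lambda\rangle[\Lambda_i]$.

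The base case $\ell=0$ (so $w=e$, $P(e)=1$) is direct: the covers $s_\beta$ of length one are the simple reflections, $P(s_{\alpha_i})=[\Lambda_i]$, and the right side collapses to $\sum_i\langle\check{\alpha}_i,\Lambda\rangle[\Lambda_i]=[\Lambda]$. For the inductive step I would expand both sides down to degree-$\ell$ factors of $P$. On the left, $[\Lambda]P(w)=\sum_{\alpha\in\des(w)}[\Lambda]P(ws_\alpha)[\Lambda_\alpha]$, and applying the inductive hypothesis to each $ws_\alpha$ (of length $\ell-1$) yields
\[
[\Lambda]P(w)=\frac{1}{\ell}\sum_{\substack{\alpha\in\des(w)\\ \gamma\in\Pi,\ \ell(ws_\alpha s_\gamma)=\ell}}\langle\check{\gamma},\Lambda\rangle\,P(ws_\alpha s_\gamma)\,[\Lambda_\alpha].
\]
On the right, applying the right recursion to each $P(ws_\beta)$ reindexes the sum precisely over the set $B_w$ of Lemma~\ref{lem:easy?}:
\[
\frac{1}{\ell+1}\sum_{\beta}\langle\check{\beta},\Lambda\rangle P(ws_\beta)=\frac{1}{\ell+1}\sum_{(\delta,\beta)\in B_w}\langle\check{\beta},\Lambda\rangle\,P(ws_\beta s_\delta)\,[\Lambda_\delta].
\]
The goal is thereby reduced to an identity between a sum indexed by $C_w:=\{(\alpha,\gamma):\alpha\in\des(w),\ \ell(ws_\alpha s_\gamma)=\ell\}$ and one indexed by $B_w$.

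To align the two index sets I would use the reflection identity $s_\beta s_\alpha=s_\alpha s_{s_\alpha(\beta)}$: sending $(\alpha,\beta)$ to $(\alpha,\gamma)$, with $\gamma$ the positive representative of $s_\alpha(\beta)$, identifies the off-diagonal part $A_w\subset B_w$ with $C_w$ minus its diagonal $\{(\alpha,\alpha)\}$, matching both the classes $P(ws_\beta s_\alpha)=P(ws_\alpha s_\gamma)$ and the trailing factors $[\Lambda_\alpha]$. Under this identification $\langle\check{\gamma},\Lambda\rangle=\langle\check{\beta},s_\alpha\Lambda\rangle=\langle\check{\beta},\Lambda\rangle-\langle\check{\alpha},\Lambda\rangle\langle\check{\beta},\alpha\rangle$, using $s_\alpha\Lambda=\Lambda-\langle\check{\alpha},\Lambda\rangle\alpha$. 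Feeding this in, the two sides differ by exactly two pieces: the diagonal discrepancy from Lemma~\ref{lem:easy?}, namely $\{(\alpha,\alpha):\alpha\in\des(w)\}$ on the $C_w$ side against $\{(\delta,\delta):\delta\notin\des(w)\}$ on the $B_w$ side (compounded with the clash of the scalars $\tfrac1\ell$ and $\tfrac1{\ell+1}$), and a correction term proportional to $\langle\check{\alpha},\Lambda\rangle\langle\check{\beta},\alpha\rangle\,P(ws_\beta s_\alpha)[\Lambda_\alpha]$.

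The main obstacle is to show these leftover terms cancel, and this is precisely where the relations $[\Lambda_i][\alpha_i]=0$ enter. Expanding $[\alpha_\alpha]$ in the $[\Lambda_j]$ via the Cartan matrix, the identity $P(\cdot)[\Lambda_\alpha][\alpha_\alpha]=0$ lets me rewrite the correction term and absorb the diagonal mismatch, restoring the clean normalization $\tfrac1{\ell+1}$. I have verified the mechanism in rank two (type $A_2$, $w=s_1$): the non-reduced contribution $[\Lambda_1]^2$ produced on the left is converted by $2[\Lambda_1]^2=[\Lambda_1][\Lambda_2]$ into exactly the weight missing for the non-simple cover $s_2s_1$, and the two sides agree. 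The general case is the bookkeeping of this cancellation, with Lemma~\ref{lem:easy?} controlling the multiplicities. An alternative route avoids induction altogether: expand $[\Lambda]P(w)=\sum_i\langle\check{\alpha}_i,\Lambda\rangle P(w)[\Lambda_i]$ directly, separate the appended words that stay reduced (producing the simple-reflection covers) from those that do not (the letters $i\in\des(w)$), and use the quadratic relations to redistribute the non-reduced monomials onto the non-simple covers $ws_\beta$; again Lemma~\ref{lem:easy?} is the device that organizes this redistribution with the correct coefficients, and I expect this reorganization to be the delicate step either way.
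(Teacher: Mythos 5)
Your overall strategy coincides with the paper's: induction on $\ell(w)$, the right recursion $P(w)=\sum_{\alpha\in\des(w)}P(ws_\alpha)[\Lambda_\alpha]$, Lemma~\ref{lem:easy?} to pass from the index set $A_w$ to $B_w$, and the quadratic relations to absorb the discrepancy. But there is a genuine gap at the crux. Because you apply the inductive hypothesis to $[\Lambda]P(ws_\alpha)$ with the weight $\Lambda$ unchanged, the change of variables $\gamma=s_\alpha(\beta)$ produces the pairing $\langle\check\beta,s_\alpha\Lambda\rangle=\langle\check\beta,\Lambda\rangle-\langle\check\alpha,\Lambda\rangle\langle\check\beta,\alpha\rangle$, and after invoking Lemma~\ref{lem:easy?} you are left needing, for each $\alpha\in\des(w)$, the identity
\[
2\,P(w)[\Lambda_\alpha]\;=\;\sum_{\beta\,:\,(\alpha,\beta)\in A_w}\langle\check\beta,\alpha\rangle\,P(ws_\beta s_\alpha)[\Lambda_\alpha].
\]
This is not mere bookkeeping: the sum runs over \emph{all} positive roots $\beta$ with $\ell(ws_\beta s_\alpha)=\ell(w)$, whereas the relation $[\Lambda_\alpha][\alpha]=0$ expanded via the Cartan matrix only relates products $[\Lambda_\alpha][\Lambda_j]$ for \emph{simple} $j$. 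Your rank-two check ($w=s_1$ in $A_2$) is misleadingly easy because there the only admissible $\beta$ is the highest root and $P(ws_\alpha)=1$, so the identity collapses to the single defining relation $2u_1^2=u_1u_2$; in general it does not.

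The gap is reparable in two ways. The displayed identity is exactly what you obtain by applying the inductive hypothesis to $ws_\alpha$ (of length $\ell-1$) with the weight $\Lambda=\alpha$ and then multiplying by $[\Lambda_\alpha]$: the left-hand side $[\alpha][\Lambda_\alpha]P(ws_\alpha)$ vanishes by the quadratic relation, the term $\gamma=\alpha$ contributes $\langle\check\alpha,\alpha\rangle P(w)[\Lambda_\alpha]=2P(w)[\Lambda_\alpha]$, and the terms $\gamma\neq\alpha$ give $-\langle\check\beta,\alpha\rangle P(ws_\beta s_\alpha)[\Lambda_\alpha]$ under $\beta=s_\alpha\gamma$. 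So a second invocation of the hypothesis closes your argument. The paper instead avoids the correction term altogether by applying the relation \emph{before} inducting: since $[\Lambda][\Lambda_\alpha]=[s_\alpha\Lambda][\Lambda_\alpha]$ (their difference is $\langle\check\alpha,\Lambda\rangle[\alpha][\Lambda_\alpha]=0$), one writes $[\Lambda]P(w)=\sum_{\alpha\in\des(w)}[s_\alpha\Lambda][\Lambda_\alpha]P(ws_\alpha)$, after which the substitution $\beta=s_\alpha(\gamma)$ yields $\langle\check\gamma,s_\alpha\Lambda\rangle=\langle\check\beta,\Lambda\rangle$ on the nose, and Lemma~\ref{lem:easy?} supplies exactly the term $-[\Lambda]P(w)$ that converts the prefactor $1/\ell$ into $1/(\ell+1)$. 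Either repair completes your proof; as written, the key cancellation is asserted rather than established.
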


\begin{proof}
Write $u_i=[\Lambda_i]$. We proceed by induction on $\ell(w)$. The case $w=e$ corresponds to the equality
\[
[\Lambda]=\sum_{i=1}^r \langle\check{\alpha}_i, \Lambda\rangle u_i
\]
which holds because of the expansion $\Lambda=\sum_{i=1}^r\langle\check{\alpha}_i, \Lambda\rangle \Lambda_i$. Now if $\ell(w)>0$, we have:
\begin{align*}
[\Lambda][P(w)]&=\sum_{i\in\des(w)}[\Lambda]u_i[P(ws_i)]=\sum_{i\in\des(w)}[s_i\Lambda]u_i[P(ws_i)],
\end{align*}
the first equality follows by splitting according to the last letter of the reduced expression, and the second one from the relations $u_i[\alpha_i]=0$.

By induction, we obtain the following sequence of equalities (see explanation below):
\begin{align*}
\ell(w)[\Lambda][P(w)]&=
\sum_{\substack{i\in\des(w),\gamma\in \Pi\\ \ell(ws_is_{\gamma})=\ell(w)}}\langle\check{\gamma},s_i\Lambda\rangle u_i[P(ws_is_\gamma)]\\
&=\sum_{\substack{i\in\des(w),\beta\in s_i\Pi\\ \ell(ws_{\beta}s_i)=\ell(w)}}\langle\check{\beta},\Lambda\rangle u_i[P(ws_\beta s_i)]\\
&=\sum_{\substack{i\in\des(w),\beta\in \Pi-\{\alpha_i\}\\ \ell(ws_{\beta}s_i)=\ell(w)}}\langle\check{\beta},\Lambda\rangle u_i[P(ws_\beta s_i)]-\sum_{i\in\des(w)}u_i\langle\check{\alpha}_i, \Lambda\rangle [P(w)]\\
&=\sum_{\substack{\beta\in \Pi, i\in\des(ws_\beta),\\ \ell(ws_{\beta})=\ell(w)+1}}\langle\check{\beta},\Lambda\rangle u_i[P(ws_\beta s_i)]-\sum_{i=1}u_i\langle\check{\alpha}_i, \Lambda\rangle [P(w)].
\end{align*}

The first equality applies induction to $[s_i\Lambda][P(ws_i)]$ for each $i$, the second one is a change of variables $\beta=s_i(\gamma)$, the third follows from the decomposition $s_i\Pi=\Pi-\{\alpha_i\}\sqcup \{-\alpha_i\}$, and the last one is Lemma~\ref{lem:easy?}.
\end{proof}

\begin{proof}[Proof of Theorem~\ref{th:KM}]  The Schubert classes $\sigma_w$ are known to satisfy 

\[c_1(L_\Lambda)\cup\sigma_w=\sum_{\substack{\beta\in \Pi\\ \ell(ws_{\beta})=\ell(w)+1}}\langle \check{\beta},\Lambda\rangle \sigma_{ws_{\beta}}
,\]
 in $H^*(G/B,\bQ)$ for any $w
\in W$,  cf. \cite{GelSer87}. It follows that $\ell(w)!i^{*}(\sigma_w), w\in W$ satisfy the recursion of Proposition \ref{prop:induction}. It thus remains to check the initial conditions $P(s_i)=i^*(\sigma_{s_i})(=[\Lambda_i])$ for all $i$, which is immediate.
\end{proof}

\subsection{Application}
\label{sub:Application_}

Let $a_w^\Phi$ the coefficients of $[X(\Phi)]$ in the cohomology of the generalized flag variety $G/B$:\[[X(\Phi)]=\sum_{w\in W'} a_w^\Phi \sigma_{w_ow},\] where $W'\subset W$ consist of the elements of length $r$. These naturally extend the numbers $a_w$ to all types, and are nonnegative numbers since they compute intersections as in type $A$. 

Given a weak composition $c=(c_1,\dots,c_r)$ of $r$, let $A_{c}^{\Phi}$ denote the \emph{mixed $\Phi$-Eulerian numbers} indexed by $c$, introduced by Postnikov \cite[Definition 18.4]{Pos09}. Like the mixed Eulerian numbers introduced earlier, the $A_{c}^{\Phi}$ are defined to be mixed volumes of $\Phi$-hypersimplices; equivalently, they occur as coefficients in the expansion of the volume plynomial of the type $\Phi$-permutahedron. For a combinatorial description of these numbers in type $B$, the reader is referred to \cite{Liu16}.

 By the relation between volumes and degrees $A_{c}^{\Phi}$  can be computed as a certain mixed degree (up to a factor $r!$), which, pushing the computation to $H^*(X, \bQ)^{W}$ via the morphism $i^*$, gives the following explicit rule: write $u_i=[\Lambda_i]$ for the generators of $H^*(X, \bQ)^{W}$ as in Theorem~\ref{th:invariant_cohomology}. Then $A_{c}^{\Phi}$ is the coefficient of $u_1\dots u_r/r!$ in the squarefree basis expansion of $u_1^{c_1}\dots u_r^{c_r}$.
 
Now as in type $A$, $a_w^\Phi$ can be computed in $H^*(X, \bQ)^{W}$ as the coefficient of the fundamental class of $X$ in the expansion of $i^*(\sigma_w)$ in $H^*(X, \bQ)^{W}$.  Thus one can finally extract the coefficient of $u_1\cdots u_r$ for $w\in W'$ in \eqref{eq:KM} to obtain the following generalization of Theorem~\ref{thm:positive_formula}.
\begin{theorem} \label{th:aw_in_other_types} For any $w\in W$ of length $r$ and $\mathbf{i}\in \reduced(w)$, let $c(\mathbf{i})=(c_1,\ldots,c_{r})$ where $c_j$ counts occurrences of$j$ in $\mathbf{i}$. Then\begin{equation}\label{eq:positive_formula_again}a_w^{\Phi}=\sum_{\mathbf{i}\in \reduced(w)}\frac{A_{c(\mathbf{i})}^{\Phi}}{r!}.\end{equation} \end{theorem}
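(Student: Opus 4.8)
The plan is to carry out the entire computation inside the invariant ring $H^*(X,\bQ)^{W}$, using nothing beyond Klyachko's presentation (Theorem~\ref{th:invariant_cohomology}), his image formula (Theorem~\ref{th:KM}), and the definition of the numbers $A_c^{\Phi}$. Write $u_i=[\Lambda_i]$ and $u_{[r]}\coloneqq u_1\cdots u_r$ for the unique top squarefree monomial, and let $\varepsilon\colon H^*(X,\bQ)^{W}\to\bQ$ be the $\bQ$-linear functional returning the coefficient of $u_{[r]}$ in the squarefree basis. By the projection formula applied to the inclusion $X\hookrightarrow G/B$, one has $a_w^{\Phi}=\int_{G/B}\sigma_w\cup[X(\Phi)]=\int_X i^*(\sigma_w)$ for $w\in W'$. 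Exactly as in type $A$, where the point class of $\Perm_n$ is $u_{[n-1]}/(n-1)!$ (see the proof of Theorem~\ref{thm:klyachko}), Klyachko's normalization identifies the fundamental class of $X$ with $u_{[r]}/r!$; hence $\int_X=r!\,\varepsilon$ on the top graded piece, and so $a_w^{\Phi}=r!\,\varepsilon\big(i^*(\sigma_w)\big)$.

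First I would substitute Theorem~\ref{th:KM}. Since $w\in W'$ has length $\ell(w)=r$, each reduced word $\mathbf{i}=i_1\cdots i_r\in\reduced(w)$ contributes the monomial $[\Lambda_{i_1}]\cdots[\Lambda_{i_r}]=u_{i_1}\cdots u_{i_r}=u^{c(\mathbf{i})}$, which has total degree $r$ and therefore already lies in the top graded component. Thus $i^*(\sigma_w)=\frac{1}{r!}\sum_{\mathbf{i}\in\reduced(w)}u^{c(\mathbf{i})}$, and applying the linear functional $\varepsilon$ termwise (the prefactor $r!$ cancelling the $1/r!$) gives $a_w^{\Phi}=\sum_{\mathbf{i}\in\reduced(w)}\varepsilon\big(u^{c(\mathbf{i})}\big)$.

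Next I would invoke the definition of the mixed $\Phi$-Eulerian numbers of Postnikov: $A_c^{\Phi}$ is by definition the coefficient of $u_{[r]}/r!$ in the squarefree expansion of $u^c$, which is precisely the statement that $\varepsilon(u^c)=A_c^{\Phi}/r!$. Substituting this into the previous identity yields $a_w^{\Phi}=\sum_{\mathbf{i}\in\reduced(w)}A_{c(\mathbf{i})}^{\Phi}/r!$, which is exactly~\eqref{eq:positive_formula_again}.

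I expect the only genuine subtlety, to be stated carefully rather than being a true obstacle, to be the normalization $\int_X=r!\,\varepsilon$ on top degree, equivalently the identification of the fundamental class of $X$ with $u_{[r]}/r!$; this is the general-type analogue of the type-$A$ fact recorded in the proof of Theorem~\ref{thm:klyachko}, and is part of Klyachko's computation of $H^*(X,\bQ)^{W}$. It is worth emphasizing that, in contrast to the type-$A$ Theorem~\ref{thm:positive_formula}, whose proof compared $(n-1)!\int_{\D_n}u^c$ against the Petrov recursion of Lemma~\ref{lem:uniqueness_via_recurrence_petrov}, no recursion or uniqueness argument is needed here, since the $A_c^{\Phi}$ are \emph{defined} as the coefficient extracted by $\varepsilon$; the whole statement then reduces to linearity of $\varepsilon$ together with the degree count $\ell(w)=r$ that forces every term of~\eqref{eq:KM} into top degree.
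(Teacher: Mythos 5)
Your proposal is correct and follows essentially the same route as the paper: push the computation to $H^*(X,\bQ)^W$ via $i^*$, substitute Klyachko's reduced-word formula (Theorem~\ref{th:KM}), and extract the coefficient of the top squarefree monomial, identified with $A^\Phi_{c}/r!$ term by term. The one small caveat is that you describe the identity $\varepsilon(u^c)=A_c^\Phi/r!$ as the \emph{definition} of $A_c^\Phi$, whereas in the paper these numbers are defined as mixed volumes of $\Phi$-hypersimplices and that coefficient-extraction rule is itself obtained from the volume--degree correspondence pushed through $i^*$; this does not affect the validity of the argument, since the rule is exactly what the paper establishes just before the theorem.
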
 
All $A_c^{\Phi}$ are positive integers because of their definition as mixed volumes. It thus follows
\begin{corollary}
For any $w\in W'$, $a_w^\Phi$ is positive and satisfies $a_{w^{-1}}^\Phi=a_w^\Phi$.
\end{corollary}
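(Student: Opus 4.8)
The plan is to read both assertions directly off the formula in Theorem~\ref{th:aw_in_other_types}, exactly as in the type $A$ case treated in Corollary~\ref{cor:aw_positivity} and Proposition~\ref{prop:stability}. Nothing deeper is needed: all the geometric and combinatorial content has already been packaged into that theorem.

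First I would establish positivity. For any $w\in W'$ the set $\reduced(w)$ is nonempty, and by the remark immediately preceding the corollary every mixed $\Phi$-Eulerian number $A_c^{\Phi}$ is a positive integer, being (up to the factor $r!$) the mixed volume of a Minkowski sum of $\Phi$-hypersimplices. Hence the right-hand side of \eqref{eq:positive_formula_again} is a nonempty sum of strictly positive rationals $A_{c(\mathbf{i})}^{\Phi}/r!$, which forces $a_w^{\Phi}>0$.

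For the symmetry under inverses, I would exhibit a content-preserving bijection between $\reduced(w)$ and $\reduced(w^{-1})$. Since each simple reflection is an involution, if $s_{i_1}\cdots s_{i_r}$ is a reduced expression for $w$ then $s_{i_r}\cdots s_{i_1}$ is a reduced expression for $w^{-1}$ of the same length; thus word reversal $\mathbf{i}=i_1\cdots i_r\mapsto i_r\cdots i_1$ is a bijection $\reduced(w)\to\reduced(w^{-1})$. Reversal permutes the letters of a word without changing how often each value occurs, so the content vectors satisfy $c(\mathbf{i})=c(i_r\cdots i_1)$ for every $\mathbf{i}$. Summing \eqref{eq:positive_formula_again} over $\reduced(w)$ and over $\reduced(w^{-1})$ therefore produces the identical collection of terms $A_{c(\mathbf{i})}^{\Phi}/r!$, giving $a_w^{\Phi}=a_{w^{-1}}^{\Phi}$.

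There is no serious obstacle here. The only point requiring any care is the verification that word reversal is a well-defined bijection on reduced words in an arbitrary Weyl group, which is the standard fact that reduced expressions for $w$ correspond under reversal to reduced expressions for $w^{-1}$; this mirrors verbatim the argument given for Proposition~\ref{prop:stability} in type $A$. Note also that, unlike the type $A$ invariance under conjugation by $w_o$, which relied on the symmetry $i\mapsto n-i$ of the presentation of $\D_n$, the inverse symmetry requires no special structure of $\Phi$ and holds in complete generality.
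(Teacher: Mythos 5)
Your proposal is correct and follows essentially the same route as the paper: positivity is read off from the formula in Theorem~\ref{th:aw_in_other_types} as a nonempty sum of the positive quantities $A^{\Phi}_{c(\mathbf{i})}/r!$, and the inverse symmetry comes from the content-preserving reversal bijection $\reduced(w)\to\reduced(w^{-1})$, exactly mirroring Corollary~\ref{cor:aw_positivity} and Proposition~\ref{prop:stability}. The paper leaves the reversal argument implicit ("it thus follows"), so your explicit verification is a faithful filling-in of the intended proof rather than a different approach.
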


The positivity of $a_w^\Phi$ solves the problem briefly considered by Harada et al. \cite[Remark 6.7]{HarHor18}. 
It would be interesting to undertake a combinatorial study of the $a_{w}^{\Phi}$ outside of type $A$ as well.

\section{Further remarks}
\label{sec:further_remarks}

\subsection{}
The original motivation for this paper was to investigate a combinatorial interpretation for the numbers $a_w$. We know from geometry that the numbers $a_w$ are nonnegative, can we find a family of objects counted by $a_w$? This was achieved in this work for \L{}ukasiewicz permutations (Theorem \ref{thm:Lukasiewicz_permutations}) and vexillary permutations (Theorem~\ref{thm:vexillary}).

The hope is to find a combinatorial interpretation in general, from which the various properties established in Section~\ref{sec:properties} would be apparent. Note that Theorem \ref{thm:cyclic_sum} strongly suggests that $a_w$ counts a subset of the reduced words of $w$, which in turn hints that the Edelman-Greene correspondence \cite{EG87} may play a role.

Based on Theorem \ref{thm:cyclic_sum}, it would be interesting to generalize the results in Section~\ref{sec:vexillary} to encompass the whole class of quasiindecomposable permutations.

A natural special case, which generalizes the Grassmannian case, is when $w$ is quasiindecomposable and fully commutative. Since the number of reduced words $\mathbf{i}$ for such a $w$ is the number of $\mathrm{SYTs}$ $f^{\lambda/\mu}$ for an appropriate connected skew shape $\lambda/\mu$ with $n-1$ boxes, and all such $\mathbf{i}$ give the same $c(\mathbf{i})$, the question of giving a combinatorial interpretation for $a_w$ amounts to giving one for $\frac{f^{\lambda/\mu}}{(n-1)!}A_{c(\mathbf{i})}$. Also the Schubert polynomial in this case is a flagged skew Schur function, so that $\nu_w$ can be interpreted as counting certain flagged skew tableaux; an approach in the manner of Section~\ref{sec:vexillary} may be successful.
As a curious aside, we remark here that one can derive the hook-content formula for $\lambda/\mu$ by piecing together our  Theorem~\ref{thm:quasiindecomposable}, Theorem~\ref{thm:positive_formula}, and Proposition~\ref{prop:mixed_connected}.

\subsection{}
Theorems \ref{thm:cyclic_sum} and \ref{thm:quasiindecomposable} give pleasant summation formulas for the numbers $a_w$. It would be interesting to find a common generalization of them. We note that Theorem~\ref{thm:quasiindecomposable} fails in general: in fact, our data seems to show that as soon as $u$ is not indecomposable, the numerator on the right hand side has at least one negative coefficient.

Another avenue worth exploring, and more in line with the theme of \cite{Ber20} and motivated by Brenti's Poset Conjecture \cite{Bre89}, is investigating aspects like real-rootedness, unimodality and log-concavity for the numerators of the right hand side in Theorem~\ref{thm:quasiindecomposable}.
By work of Brenti \cite{Bre89} and Br\"and\'en \cite{Br04,Br04b}, the Grassmannian case is already well understood.

\subsection{}
Given $w\in \sgrp_\infty$, consider the polynomial $\Mt_w(x_1,x_2,\ldots)$ defined by
\[
\Mt_w\coloneqq\frac{1}{\ell(w)!}\M_w(x_1,x_1+x_2,x_1+x_2+x_3,\ldots)=
\frac{1}{\ell(w)!} \sum_{\mathbf{i}\in \reduced(w)}y^{c(\mathbf{i})}.
\]

 Now let $w\in\sgrpp_n$. It is quite striking to compare the formulas given by the two approaches of Section~\ref{sec:formulas}. Indeed by Macdonald's identity \eqref{eq:macdonald}, we have~$\Mt_{w}(1,1,\ldots)=\schub_{w}(1,1,\ldots)=\nu_w$. Also, by Theorems~\ref{thm:anderson_tymoczko} and \ref{thm:positive_formula}, we moreover have $\ds{\Mt_{w}}{n}=\ds{\schub_{w}}{n}=a_w$. The coincidence between these specializations is a reflection of a phenomenon explored in greater generality in \cite{NT21}.
 
 \subsection{}
The summatory results for connected mixed Eulerian numbers (Proposition \ref{prop:mixed_connected}) and  quasiindecomposable permutations (Theorem~\ref{thm:quasiindecomposable}) can be expressed compactly in terms of certain {\em back stable} analogues, inspired by the work of Lam, Lee and Shimozono~\cite{Lam18}.

Consider the algebra $\mathcal{B}$ of bounded degree power series in $\bQ[[x_i,i\in\mathbb{Z}]]$ that are polynomials in the $x_i,i>0$, and symmetric in the $x_i,i\leq 0$.
Thus $\mathcal{B}$ identifies naturally with $\Lambda(x_i,i\leq 0)\otimes \bQ[x_i,i>0]$.
Let $f\in \mathcal{B}$ be homogeneous of degree $n-1$, written $f\in\mathcal{B}^{(n-1)}$.
Following \cite{Lam18}, consider the truncation operator $\pi_{+}(f)\coloneqq f(\ldots,0,x_{1},x_{2},\ldots)$ and the shift operator $\gamma$ that sends $x_i\mapsto x_{i+1}$ for all $i\in \mathbb{Z}$.
This given, define $f[m]\coloneqq \pi_{+}(\gamma^m(f))$ which is a polynomial in $x_1,x_2,\ldots$, and let $f[m](\mathbf{1})$ denote its evaluation when all $x_i,i>0$ are specialized to $1$.
Then $f[m](\mathbf{1})$ is a polynomial in $m$ of degree $\leq n-1$ (easy), and we infer the existence of $h_m^{f}\in \bQ$ such that
\begin{equation}
\sum_{j\geq 0} f[j](\mathbf{1}) t^j=\frac{\sum_{m\geq 0}h_m^{f}t^m}{(1-t)^n}.
\end{equation}
\begin{definition}
Let $\mathcal{D}^n$ be the subspace of $f\in\mathcal{B}^{(n-1)}$ such that $h_m^{f}=\ds{f[m]}{n}$ for any $m\geq 0$.
\end{definition}
	We now briefly touch upon some elements that lie $\mathcal{D}^n$ by our results.
First, Theorem~\ref{thm:quasiindecomposable} says that the \emph{back stable Schubert polynomial} $\overleftarrow{\schub_u}$ \cite{Lam18} is in $\mathcal{D}^n$ if $u$ is indecomposable of length $n-1$.

Additionally, if $f$ is a symmetric function in the $x_i,i<0$, then $f[m]$ is the symmetric polynomial $f(x_1,\ldots,x_m)$. The fact that $f\in\mathcal{D}^n$ is one of the main results of~\cite{DS}.

Let $\overleftarrow{y_k}$ be the series $\overleftarrow{y_k}=\ldots +x_{-2}+x_{-1}+x_{0}+\ldots+x_{k-1}+x_{k}=\sum_{i\leq k} x_i.$
Given ${\bf a}\in \wc{p}{n-1}$, define $\overleftarrow{y_{\bf a}}=\overleftarrow{y_1}^{a_1}\overleftarrow{y_2}^{a_2}\cdots \overleftarrow{y_p}^{a_p}$.
Then Proposition~\ref{prop:mixed_connected} says precisely that if ${\bf a}$ is a strong composition, that is ${\bf a}\vDash n-1$, then $\overleftarrow{y_{\bf a}}\in \mathcal{D}^n$.

In view of the aforementioned, the following problem is natural: \emph{Characterize the space $\mathcal{D}^n$, for instance by finding a distinguished basis.}
By working in an `infinite' version of $\D_n$, we obtain a partial answer to this question in \cite{NT21}.

\subsection{}\label{subsec:approach_AT} By expanding a double Schubert polynomial in terms of Schubert polynomials (cf. \cite{Man01}), Formula~ \eqref{eq:regular_class_doubleschubert} gives
\begin{equation}
\label{eq:regular_class_expansion}
\Sigma_h=\sum_{\substack{u,v \in {S}_n \\ v^{-1}u=w_{h} \\ \ell(u)+\ell(v)=\ell(w_{h})}}\schub_u\schub_v\mod I_n.
\end{equation}

In~\cite{And10}, this latter formula is used to give an explicit expansion of $\Sigma_h$ in the Schubert basis in the easy special case where $w_h\in \sgrp_k\subset \sgrp_n$ with $2k\leq n$.
Recall that $w_h$ is the permutation given by $\code{w_h^{-1}}=(n-h(1),\ldots,n-h(n))$.

In the case $h=(2,3,\ldots,n,n)$ that is the subject of our study, we have $w_h=w_o^{n-1}$, so we get
\[\tau_n=\sum_{\substack{u,v\in \sgrp_n\\uv^{-1}=w_o^{n-1} \\ \ell(u)+\ell(v)=\binom{n-1}{2}
}} \sigma_u\sigma_{w_ovw_o}\]

We may simplify the summation range: as shown in \cite[Lemma 6.1]{HarHor18}, the conditions are equivalent to $u\in\sgrp_{n-1}$ (and $v=w_o^{n-1}u$).
Let us give a short proof:  For any $u\in\sgrp_n$, $\ell(u)+\ell(w_o^{n-1}u)\geq \ell(w_o^{n-1})=\binom{n-1}{2}$, since any pair $(i,j)$ with $1\leq i<j\leq n-1$ is an inversion in either $u$ or $w_o^{n-1}u$. It follows then that $\ell(u)+\ell(w_o^{n-1}u)=\ell(w_o^{n-1})$ if no pair $(i,n)$ is an inversion either $u$ or $w_o^{n-1}u$, which is clearly equivalent to $u(n)=n$ so that $u\in \sgrp_{n-1}$. Therefore we can write
\[ \class_n=\sum_{u \in\sgrp_{n-1}
} \sigma_u\sigma_{1\times w_o^{n-1}u}.\]

Extracting coefficients gives the summation formulas for $w\in\sgrpp_n$:
\begin{equation}
\label{eq:aw_cuvw}
a_w=\sum_{u \in\sgrp_{n-1}}c^w_{u,1\times w_o^{n-1}u},
\end{equation}
where the structure coefficients $c_{u,v}^w$ are defined in
\eqref{eq:structure_coefficients_schubert_2}. Together with the combinatorial interpretations (Theorem~\ref{thm:Lukasiewicz_permutations}, Corollary~\ref{cor:vexillary}) and our various other results about the $a_w$, Equation \eqref{eq:aw_cuvw} gives information about certain coefficients $c_{uv}^w$ that may be of interest in the quest to find a combinatorial interpretation for them.

\subsection{} To go beyond the focus of this work, a natural endeavour is to compute the coefficients in the Schubert basis for the other regular Hessenberg classes $\Sigma_h$, see Section~\ref{sub:AT}.

As mentioned above, this was essentially done in \cite{And10} for the case $w_h\in \sgrp_k\subset \sgrp_n$ with $2k\leq n$; they also consider the case where $h(i)=n$ for $i>1$.
The starting point is the formula~\eqref{eq:regular_class_doubleschubert} for $\Sigma_h$.

Let us also mention the work \cite{Ins20} which gives another polynomial representative for $\Sigma_h$: consider the permutation  $w_h'\in \sgrp_{2n}$ given by $w_h'(i+h(i))=n+i$ for $i\in [n]$ and put the values $1,\ldots,n$ from left to right in the remaining entries. Then
\begin{equation}
\label{eq:regular_class_bigschubert}
\Sigma_h=\schub_{w'_h}(x_1,\ldots,x_{h(1)},x_1,x_{h(1)+1},\ldots,x_{h(2)},x_2,x_{h(2)+1},\ldots,x_{h(n)},x_n)\mod I_n
\end{equation}

We would also like to emphasize the recent work of Kim~\cite{Kim20}: he investigates a larger family of cohomology classes, in all types, coming from varieties related to the Deligne-Lusztig varieties. His formulas in type $A$ extend those of~\cite{And10}.

\appendix

\section{Proof of Proposition \ref{prop:Lukasiewicz_permutations_inverse}}

Let $w\in \sgrp_n$ with code $(c_1,\dots,c_{n-1})$. We define the composition $\bar{a}(w)=(a_1,\dots,a_n)$ by
 \begin{equation}
 \label{eq:a_from_c}
  a_i=|\{1\leq j\leq i\suchthat c_j> i-j\}|.
  \end{equation}
  More generally, consider $\gamma\in \pipe(w)$. Following~\cite{Wei18}, let ${a}(\gamma)=(a_1,a_2,\ldots,a_n)$ where $a_k$ is the number of $+$'s on the $k$th antidiagonal $i+j=k-1$. Then $\bar{a}(w)=a(\gamma_w)$ where $\gamma_w$ is the bottom pipe dream of $w$.

 \begin{example}  For $w=153264$ we have $\code{w}=(0,3,1,0,1,0)$ and $\bar{a}(w)=(0,1,2,1,1,0)$, while if $w=413265$, then $\code{w}=(3,0,1,0,1,0)$ and $\bar{a}(w)=(1,1,2,0,1,0)$.
   For the first permutation, neither $\code{w}=(0,3,1,0,1,0)$ nor $\bar{a}(w)$ are in $\Lukwc{n}$, while both of them are in $\Lukwc{n}$ in the second case. Refer to the diagram that follows.
  \begin{figure}[!ht]
 \includegraphics[scale=0.8]{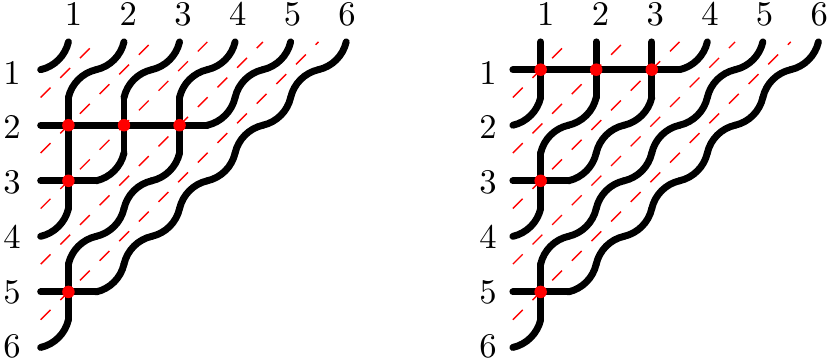}
 \end{figure}
 \end{example}
  \begin{proposition}\label{prop:alternative_characterization_Lukasiewicz}
   For $w\in \sgrpp_n$, we have that $\code{w}\in \Lukwc{n}$  if and only if $\bar{a}(w)\in \Lukwc{n}$.
 \end{proposition}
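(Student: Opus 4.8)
The plan is to translate both Łukasiewicz conditions into statements about partial sums and reduce everything to one clean inequality. Writing $\code{w}=(c_1,\dots,c_n)$, set $C_k=c_1+\cdots+c_k$, and let $A_k=a_1+\cdots+a_k$ be the partial sums of $\bar{a}(w)=(a_1,\dots,a_n)$. Since both tuples lie in $\wcp{n}$ (they sum to $n-1$), the statement $\code{w}\in\Lukwc{n}$ reads $C_k\geq k$ for all $k\in[n-1]$, and $\bar{a}(w)\in\Lukwc{n}$ reads $A_k\geq k$ for all $k\in[n-1]$. So I must show $C_k\geq k\ \forall k \iff A_k\geq k\ \forall k$.

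First I would record a convenient reformulation of $\bar{a}(w)$. From $a_i=|\{1\leq j\leq i:c_j>i-j\}|$, I associate to each row $j$ with $c_j\geq 1$ the integer interval $I_j=\{j,j+1,\dots,j+c_j-1\}$ of length $c_j$ and left endpoint $j$; these are exactly the antidiagonals occupied by the $+$'s of row $j$ in the bottom pipe dream $\gamma_w$. A direct check gives $a_k=|\{j:k\in I_j\}|$, so that $A_k=\sum_j |I_j\cap\{1,\dots,k\}|$ while $C_k=\sum_{j\leq k}|I_j|$. The crucial structural feature is that the $I_j$ have pairwise distinct left endpoints.

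The direction $\bar{a}(w)\in\Lukwc{n}\Rightarrow\code{w}\in\Lukwc{n}$ is immediate: only intervals with $j\leq k$ meet $\{1,\dots,k\}$, and each contributes at most $|I_j|=c_j$, so $A_k\leq C_k$ for every $k$; hence $A_k\geq k$ forces $C_k\geq k$. For the converse I would assume $C_k\geq k$ for all $k\in[n-1]$ and prove $A_k\geq k$ by induction on $k$. The base case $k=1$ follows from $C_1=c_1\geq 1$. For the step, write $A_k=A_{k-1}+a_k$: if $a_k\geq 1$, the inductive hypothesis $A_{k-1}\geq k-1$ finishes it, so the only obstruction is $a_k=0$, meaning the point $k$ is covered by no interval. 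The key observation is that $a_k=0$ forces $c_k=0$ and every interval $I_j$ with $j\leq k$ to be contained in $\{1,\dots,k-1\}$; therefore $A_{k-1}=C_{k-1}=C_k\geq k$, whence $A_k=A_{k-1}\geq k$.

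The main obstacle is precisely this $a_k=0$ case, where a naive bound fails because a single long interval starting near the top can waste a great deal of length to the right of $k$. The resolution is the observation above, that $a_k=0$ pins all of the committed length $C_k$ weakly to the left of $k$; this is exactly the point where I use the \emph{full} Łukasiewicz hypothesis rather than just $C_k\geq k$ (a composition such as $(0,0,5,0,0,0)$ has $C_3\geq 3$ but $A_3=1$, so the single inequality $C_k\geq k$ is genuinely insufficient). Once the interval reformulation is in place I expect the induction to go through cleanly; the only real care needed is the off-by-one bookkeeping in verifying the identity $a_k=|\{j:k\in I_j\}|$.
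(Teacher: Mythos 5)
Your proof is correct and follows essentially the same route as the paper: your identity $A_k=\sum_j|I_j\cap\{1,\dots,k\}|$ is exactly the paper's $\sum_{j\le k}a_j=\sum_{j\le k}\min\{c_j,\,k-j+1\}$, the easy direction is the same inequality $A_k\le C_k$, and your key step (that $a_k=0$ pins every interval $I_j$ with $j\le k$ inside $\{1,\dots,k-1\}$, forcing $A_{k-1}=C_{k-1}=C_k$) is precisely the paper's observation that $a_k=0$ gives $c_j\le k-j$ and hence $\sum_{j\le k}a_j=\sum_{j\le k}c_j$. The only difference is packaging: you run a forward induction on $k$ where the paper takes a minimal counterexample in the contrapositive, which is a logically equivalent organization of the same argument.
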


 \begin{proof}
 Write $\code{w}=(c_1,\ldots,c_n)$ and $\bar{a}(w)=(a_1,\dots,a_n)$. For $1\leq i\leq n-1$, we have
     \begin{align}\label{eqn:sum_aj_geq_j}
       \sum_{1\leq j\leq i}a_j=\sum_{1\leq j\leq i}\min\{c_j,i-j+1\}\leq \sum_{1\leq j\leq i}c_j.
     \end{align}
It follows immediately that if $\bar{a}(w)\in \Lukwc{n}$ then $c(w)\in \Lukwc{n}$.

Conversely, assume $\bar{a}(w)\notin \Lukwc{n}$, so that there exists $1\leq k\leq n-1$ such that
     \begin{align}
       \sum_{1\leq j\leq k}a_j<k.
     \end{align}
Let $k$ be the smallest integer with this property. This forces $\sum_{1\leq j\leq k-1}a_j=k-1$ and $a_k=0$ (note that this holds in the special case $k=1$ also). By \eqref{eq:a_from_c} this implies in turn that $c_j\leq k-j$ for $j=1,\ldots,k$ and thus, by using the leftmost equality in \eqref{eqn:sum_aj_geq_j},
     \begin{align}
     \sum_{1\leq j\leq k}c_j=\sum_{1\leq j\leq k}a_j=k-1.
     \end{align}
      Therefore $\code{w}\notin \Lukwc{n}$, which finishes the proof.
 \end{proof}

\begin{proof}[Proof of Proposition~\ref{prop:Lukasiewicz_permutations_inverse}]
We use here~\cite[Lemma 3.6(iii)]{Wei18}  which states that for any $w\in\sgrp_\infty$, ${a}(\gamma_w)={a}(\gamma_{w^{-1}})$, which translates into $\bar{a}(w)=\bar{a}(w^{-1})$. We then conclude by Proposition~\ref{prop:alternative_characterization_Lukasiewicz}.
\end{proof}

\section*{Acknowledgments}
We are grateful to Sara Billey, Sean Griffin, and particularly Alex Woo for numerous helpful discussions at the early stages of this project.
We would also like to thank Marcelo Aguiar, Dave Anderson, Jang Soo Kim, Alejandro Morales, Allen Knutson, and Alex Yong for enlightening discussions.

\section*{Tables}
\begin{table}[!ht]
\centering
\caption{The Schubert class expansions $\displaystyle{\tau_n=\sum_{w\in\sgrpp_n}a_w\sigma_{w_ow}}$ for $ 2\leq n\leq 6$. Indexing permutations $w_ow$ are highlighted if $w$ is not quasiindecomposable. } \label{tab:schub_exp}
\begin{tabular}{ |p{1cm}|p{14cm} | }
	\hline
	$n$ & Schubert expansions for $\tau_n$\\
\hline
$2$ & $\schub_{1}$\\
\hline
$ 3 $ & $\schub_{132} + \schub_{21}$ \\
\hline
$ 4 $ & $ \schub_{1 4 3 2} + \schub_{2 3 4 1} + 2\schub_{2 4 1 3} + 2\schub_{3 1 4 2} + \schub_{3 2 1} + \schub_{4 1 2 3} $ \\
\hline
$5 $ & $ \schub_{1 5 4 3 2} + \schub_{2 4 5 3 1} + 2\schub_{2 5 3 4 1} + 3\schub_{2 5 4 1 3} + \schub_{3 2 5 4 1} + \schub_{3 4 2 5 1} + 4\schub_{\textcolor{magenta}{3 4 5 1 2}} + 5\schub_{3 5 1 4 2} + 3\schub_{3 5 2 1 4} + 3\schub_{4 1 5 3 2} + 2\schub_{4 2 3 5 1} + 5\schub_{4 2 5 1 3} + 3\schub_{4 3 1 5 2} + \schub_{4 3 2 1} + 4\schub_{\textcolor{magenta}{4 5 1 2 3}} + 2\schub_{5 1 3 4 2} + \schub_{5 1 4 2 3} + \schub_{5 2 1 4 3} + 2\schub_{5 2 3 1 4} + \schub_{5 3 1 2 4} $\\
\hline
$ 6$ & $ \schub_{1 6 5 4 3 2} + \schub_{2 5 6 4 3 1} + 2\schub_{2 6 4 5 3 1} + 3\schub_{2 6 5 3 4 1} + 4\schub_{2 6 5 4 1 3} + \schub_{3 4 6 5 2 1} + \schub_{3 5 4 6 2 1} + 3\schub_{3 5 6 2 4 1} + 5\schub_{\textcolor{magenta}{3 5 6 4 1 2}} + 3\schub_{3 6 2 5 4 1} + 3\schub_{3 6 4 2 5 1} + 10\schub_{\textcolor{magenta}{3 6 4 5 1 2}} + 9\schub_{3 6 5 1 4 2} + 6\schub_{3 6 5 2 1 4} + 2\schub_{4 2 6 5 3 1} + \schub_{4 3 5 6 2 1} + 2\schub_{4 3 6 2 5 1} + 5\schub_{\textcolor{magenta}{4 3 6 5 1 2}} + 3\schub_{4 5 2 6 3 1} + \schub_{4 5 3 2 6 1} + 5\schub_{\textcolor{magenta}{4 5 3 6 1 2}} + 10\schub_{\textcolor{magenta}{4 5 6 1 3 2}} + 10\schub_{\textcolor{magenta}{4 5 6 2 1 3}} + 9\schub_{4 6 1 5 3 2} + 8\schub_{4 6 2 3 5 1} + 16\schub_{4 6 2 5 1 3} + 11\schub_{4 6 3 1 5 2} + 4\schub_{4 6 3 2 1 5} + 10\schub_{\textcolor{magenta}{4 6 5 1 2 3}} + 4\schub_{5 1 6 4 3 2} + 3\schub_{5 2 4 6 3 1} + 8\schub_{5 2 6 3 4 1} + 11\schub_{5 2 6 4 1 3} + 3\schub_{5 3 2 6 4 1} + 2\schub_{5 3 4 2 6 1} + 10\schub_{\textcolor{magenta}{5 3 4 6 1 2}} + 16\schub_{5 3 6 1 4 2} + 9\schub_{5 3 6 2 1 4} + 6\schub_{5 4 1 6 3 2} + 3\schub_{5 4 2 3 6 1} + 9\schub_{5 4 2 6 1 3} + 4\schub_{5 4 3 1 6 2} + \schub_{5 4 3 2 1} + 10\schub_{\textcolor{magenta}{5 4 6 1 2 3}} + 10\schub_{\textcolor{magenta}{5 6 1 3 4 2}} + 5\schub_{\textcolor{magenta}{5 6 1 4 2 3}} + 5\schub_{\textcolor{magenta}{5 6 2 1 4 3}} + 10\schub_{\textcolor{magenta}{5 6 2 3 1 4}} + 5\schub_{\textcolor{magenta}{5 6 3 1 2 4}} + 3\schub_{6 1 4 5 3 2} + 2\schub_{6 1 5 3 4 2} + \schub_{6 1 5 4 2 3} + 3\schub_{6 2 3 5 4 1} + 4\schub_{6 2 4 3 5 1} + 8\schub_{6 2 4 5 1 3} + 2\schub_{6 2 5 1 4 3} + 3\schub_{6 2 5 3 1 4} + 3\schub_{6 3 1 5 4 2} + 3\schub_{6 3 2 4 5 1} + 3\schub_{6 3 2 5 1 4} + 8\schub_{6 3 4 1 5 2} + 3\schub_{6 3 4 2 1 5} + 3\schub_{6 3 5 1 2 4} + 3\schub_{6 4 1 3 5 2} + 3\schub_{6 4 1 5 2 3} + 2\schub_{6 4 2 1 5 3} + 2\schub_{6 4 2 3 1 5} + \schub_{6 4 3 1 2 5} + \schub_{6 5 1 2 4 3} + \schub_{6 5 1 3 2 4} + \schub_{6 5 2 1 3 4} $ \\
\hline
\end{tabular}
\end{table}

\begin{longtable}[h]{| c |c| c |}
\caption{The numerator on the right hand side of \eqref{eq:summation_indecomposable} for indecomposable $u$ (up to inverses and conjugation by the longest word).  The non-vexillary $u$ are highlighted. \label{tab:indecomposable}}\\
	\hline
  Indecomposable $u\in \sgrp_{p+1}$ & $\ell(u)$ & $ \displaystyle{\sum_{m=0}^{\ell(u)-p} a_{u^{[m]}}t^m}$ \\
 \hline
 $21$ & $1$ & $1$\\
 \hline\hline
 $231$ & $2$ & $1$\\
 \hline
 $321$ & $3$ & $t+1$\\
 \hline\hline
 $2 3 4 1$ & $3$ & $ 1$\\
 $ 2 4 1 3$ & $3$ & $ 2$\\
 \hline
$ 2 4 3 1$ & $4$ & $t+2$\\
$ 3 4 1 2$ & $4$ & $ t+1$\\
\hline
$ 4 2 3 1$ & $5$ & $ t^2+4\,t+1$\\
\hline
$ 4 3 2 1$ & $6$ & $t^3+7\,t^2+7\,t+1 $\\
\hline
\hline
$ 2 3 4 5 1 $ & $4$ & $ 1 $ \\
$ 2 3 5 1 4 $ & $4$ & $ 3 $ \\
$\textcolor{magenta}{2 4 1 5 3} $ & $4$ & $ 5 $ \\
\hline
$ 2 3 5 4 1 $ & $5$ & $ t + 3 $ \\
$ 2 4 3 5 1 $ & $5$ & $ 2 \, t + 2 $ \\
$ 2 4 5 1 3 $ &$5$ & $ 2 \, t + 3 $ \\
$ \textcolor{magenta}{2 5 1 4 3} $ & $5$ & $ 3 \, t + 8 $ \\
$ 2 5 3 1 4 $ & $5$ & $ 3 \, t + 3 $ \\
\hline
$ 2 4 5 3 1 $ & $6$ & $ t^{2} + 5 \, t + 3 $ \\
$ 2 5 3 4 1 $ & $6$ & $ t^{2} + 6 \, t + 3 $ \\
$ 2 5 4 1 3 $ & $6$ & $ 2 \, t^{2} + 9 \, t + 5 $ \\
$\textcolor{magenta}{ 3 2 5 4 1 }$ & $6$ & $ 3 \, t^{2} + 13 \, t + 3 $ \\
$ 3 4 5 1 2 $ & $6$ & $ t^{2} + 3 \, t + 1 $ \\
$ 3 5 1 4 2 $ & $6$ & $ 2 \, t^{2} + 10 \, t + 4 $ \\
\hline
$ 2 5 4 3 1 $ & $7$ & $ t^{3} + 11 \, t^{2} + 18 \, t + 5 $ \\
$ 3 4 5 2 1 $ & $7$ & $ t^{3} + 6 \, t^{2} + 6 \, t + 1 $ \\
$ 3 5 4 1 2 $ & $7$ & $ t^{3} + 8 \, t^{2} + 10 \, t + 2 $ \\
$ 5 2 3 4 1 $ & $7$ & $ t^{3} + 9 \, t^{2} + 9 \, t + 1 $ \\
\hline
$ 3 5 4 2 1 $ & $8$ & $ t^{4} + 14 \, t^{3} + 34 \, t^{2} + 19 \, t + 2 $ \\
$ 4 5 3 1 2 $ & $8$ & $ t^{4} + 10 \, t^{3} + 20 \, t^{2} + 10 \, t + 1 $ \\
$ 5 2 4 3 1 $ & $8$ & $ t^{4} + 17 \, t^{3} + 45 \, t^{2} + 25 \, t + 2 $ \\
\hline
$ 5 4 3 2 1 $ & $10$ & $ t^{6} + 31 \, t^{5} + 187 \, t^{4} + 330 \, t^{3} + 187 \, t^{2} + 31 \, t + 1 $ \\
\hline
\end{longtable}

\bibliographystyle{hplain}
\bibliography{Biblio_Peterson}

\begin{thebibliography}{10}

\bibitem{AbeHor19}
H.~Abe and T.~Horiguchi.
\newblock A survey of recent developments on {H}essenberg varieties, 2019,
  arXiv:1904.11155.

\bibitem{HTM19}
H.~Abe, T.~Horiguchi, and M.~Masuda.
\newblock The cohomology rings of regular semisimple {H}essenberg varieties for
  {$h=(h(1),n,\dots,n)$}.
\newblock {\em J. Comb.}, 10(1):27--59, 2019.

\bibitem{AFZ20_bis}
Hiraku Abe, Naoki Fujita, and Haozhi Zeng.
\newblock Geometry of regular {H}essenberg varieties.
\newblock {\em Transform. Groups}, 25(2):305--333, 2020.

\bibitem{AbHo16}
T.~Abe, T.~Horiguchi, M.~Masuda, S.~Murai, and T.~Sato.
\newblock Hessenberg varieties and hyperplane arrangements, 2016,
  arXiv:1611.00269.

\bibitem{And10}
D.~Anderson and J.~Tymoczko.
\newblock Schubert polynomials and classes of {H}essenberg varieties.
\newblock {\em J. Algebra}, 323(10):2605--2623, 2010.

\bibitem{Ber93}
N.~Bergeron and S.~Billey.
\newblock R{C}-graphs and {S}chubert polynomials.
\newblock {\em Experiment. Math.}, 2(4):257--269, 1993.

\bibitem{Ber18}
N.~Bergeron, C.~Ceballos, and V.~Pilaud.
\newblock Hopf dreams and diagonal harmonics, 2018, arXiv:1807.03044.

\bibitem{Ber20}
A.~Berget, H.~Spink, and D.~Tseng.
\newblock Log-concavity of matroid h-vectors and mixed {E}ulerian numbers,
  2020, arXiv:2005.01937.

\bibitem{BGG73}
I.~N. Bern\v{s}te\u{\i}n, I.~M. Gelfand, and S.~I. Gelfand.
\newblock Schubert cells, and the cohomology of the spaces {$G/P$}.
\newblock {\em Uspehi Mat. Nauk}, 28(3(171)):3--26, 1973.

\bibitem{Bil19}
S.~C. Billey, A.~E. Holroyd, and B.~J. Young.
\newblock A bijective proof of {M}acdonald's reduced word formula.
\newblock {\em Algebr. Comb.}, 2(2):217--248, 2019.

\bibitem{Bil93}
S.~C. Billey, W.~Jockusch, and R.~P. Stanley.
\newblock Some combinatorial properties of {S}chubert polynomials.
\newblock {\em J. Algebraic Combin.}, 2(4):345--374, 1993.

\bibitem{Bor53}
A.~Borel.
\newblock Sur la cohomologie des espaces fibr\'{e}s principaux et des espaces
  homog\`enes de groupes de {L}ie compacts.
\newblock {\em Ann. of Math. (2)}, 57:115--207, 1953.

\bibitem{Br04}
P.~Br\"{a}nd\'{e}n.
\newblock On operators on polynomials preserving real-rootedness and the
  {N}eggers-{S}tanley conjecture.
\newblock {\em J. Algebraic Combin.}, 20(2):119--130, 2004.

\bibitem{Br04b}
P.~Br\"{a}nd\'{e}n.
\newblock Sign-graded posets, unimodality of {$W$}-polynomials and the
  {C}harney-{D}avis conjecture.
\newblock {\em Electron. J. Combin.}, 11(2):Research Paper 9, 15, 2004/06.

\bibitem{Bre89}
F.~Brenti.
\newblock Unimodal, log-concave and {P}\'{o}lya frequency sequences in
  combinatorics.
\newblock {\em Mem. Amer. Math. Soc.}, 81(413):viii+106, 1989.

\bibitem{Bri05}
M.~Brion.
\newblock Lectures on the geometry of flag varieties.
\newblock In {\em Topics in cohomological studies of algebraic varieties},
  Trends Math., pages 33--85. Birkh\"{a}user, Basel, 2005.

\bibitem{BC18}
P.~Brosnan and T.~Y. Chow.
\newblock Unit interval orders and the dot action on the cohomology of regular
  semisimple {H}essenberg varieties.
\newblock {\em Adv. Math.}, 329:955--1001, 2018.

\bibitem{Ba17}
A.~B\u{a}libanu.
\newblock The {P}eterson variety and the wonderful compactification.
\newblock {\em Represent. Theory}, 21:132--150, 2017.

\bibitem{MPS92}
F.~De~Mari, C.~Procesi, and M.~A. Shayman.
\newblock Hessenberg varieties.
\newblock {\em Trans. Amer. Math. Soc.}, 332(2):529--534, 1992.

\bibitem{MaSh88}
F.~De~Mari and M.~A. Shayman.
\newblock Generalized {E}ulerian numbers and the topology of the {H}essenberg
  variety of a matrix.
\newblock {\em Acta Appl. Math.}, 12(3):213--235, 1988.

\bibitem{Dr15}
E.~Drellich.
\newblock Monk's rule and {G}iambelli's formula for {P}eterson varieties of all
  {L}ie types.
\newblock {\em J. Algebraic Combin.}, 41(2):539--575, 2015.

\bibitem{EG87}
P.~Edelman and C.~Greene.
\newblock Balanced tableaux.
\newblock {\em Adv. in Math.}, 63(1):42--99, 1987.

\bibitem{Esc16}
L.~Escobar and K.~M\'{e}sz\'{a}ros.
\newblock Toric matrix {S}chubert varieties and their polytopes.
\newblock {\em Proc. Amer. Math. Soc.}, 144(12):5081--5096, 2016.

\bibitem{FS94}
S.~Fomin and R.~P. Stanley.
\newblock Schubert polynomials and the nil-{C}oxeter algebra.
\newblock {\em Adv. Math.}, 103(2):196--207, 1994.

\bibitem{Fulton}
W.~Fulton.
\newblock {\em Young tableaux}, volume~35 of {\em London Mathematical Society
  Student Texts}.
\newblock Cambridge University Press, Cambridge, 1997.
\newblock With applications to representation theory and geometry.

\bibitem{GelSer87}
I.~M. Gelfand and V.~V. Serganova.
\newblock Combinatorial geometries and the strata of a torus on homogeneous
  compact manifolds.
\newblock {\em Uspekhi Mat. Nauk}, 42(2(254)):107--134, 287, 1987.

\bibitem{sean}
S.~Griffin.
\newblock Schubert-{P}eterson intersection via {G}r\"obner geometry, 2019,
  Personal communication.

\bibitem{Ham20}
Z.~Hamaker, O.~Pechenik, D.~E Speyer, and A.~Weigandt.
\newblock Derivatives of {S}chubert polynomials and proof of a determinant
  conjecture of {S}tanley.
\newblock {\em Algebraic Combinatorics}, 3(2):301--307, 2020.

\bibitem{HarHor18}
M.~Harada, T.~Horiguchi, M.~Masuda, and S.~Park.
\newblock The volume polynomial of regular semisimple {H}essenberg varieties
  and the {G}elfand-{Z}etlin polytope.
\newblock {\em Proc. Steklov Inst. Math.}, 305:318--344, 2019.

\bibitem{HP18}
M.~Harada and M.~Precup.
\newblock The cohomology of abelian {H}essenberg varieties and the
  {S}tanley-{S}tembridge conjecture.
\newblock {\em S\'{e}m. Lothar. Combin.}, 80B:Art. 49, 12, 2018.

\bibitem{HaTy11}
M.~Harada and J.~Tymoczko.
\newblock A positive {M}onk formula in the {$S^1$}-equivariant cohomology of
  type {$A$} {P}eterson varieties.
\newblock {\em Proc. Lond. Math. Soc. (3)}, 103(1):40--72, 2011.

\bibitem{HT17}
M.~Harada and J.~Tymoczko.
\newblock Poset pinball, {GKM}-compatible subspaces, and {H}essenberg
  varieties.
\newblock {\em J. Math. Soc. Japan}, 69(3):945--994, 2017.

\bibitem{HK12}
J.~Huh and E.~Katz.
\newblock Log-concavity of characteristic polynomials and the {B}ergman fan of
  matroids.
\newblock {\em Math. Ann.}, 354(3):1103--1116, 2012.

\bibitem{In15}
E.~Insko.
\newblock Schubert calculus and the homology of the {P}eterson variety.
\newblock {\em Electron. J. Combin.}, 22(2):Paper 2.26, 12, 2015.

\bibitem{IT16}
E.~Insko and J.~Tymoczko.
\newblock Intersection theory of the {P}eterson variety and certain
  singularities of {S}chubert varieties.
\newblock {\em Geom. Dedicata}, 180:95--116, 2016.

\bibitem{Ins20}
E.~Insko, J.~Tymoczko, and A.~Woo.
\newblock A formula for the cohomology and {$K$}-class of a regular
  {H}essenberg variety.
\newblock {\em J. Pure Appl. Algebra}, 224(5):106230, 2020.

\bibitem{IY12}
E.~Insko and A.~Yong.
\newblock Patch ideals and {P}eterson varieties.
\newblock {\em Transform. Groups}, 17(4):1011--1036, 2012.

\bibitem{Kim20}
D.~Kim.
\newblock Homology {C}lass of a {D}eligne--{L}usztig {V}ariety and {I}ts
  {A}nalogs.
\newblock {\em Int. Math. Res. Not. IMRN}, (4):1246--1280, 2020.

\bibitem{Kly85}
A.~A. Klyachko.
\newblock Orbits of a maximal torus on a flag space.
\newblock {\em Funktsional. Anal. i Prilozhen.}, 19(1):77--78, 1985.

\bibitem{Kly95}
A.~A. Klyachko.
\newblock Toric varieties and flag spaces.
\newblock {\em Trudy Mat. Inst. Steklov.}, 208:139--162, 1995.

\bibitem{Lam18}
T.~Lam, S.~J. Lee, and M.~Shimozono.
\newblock Back stable {S}chubert calculus, 2018, arXiv:1806.11233.

\bibitem{Las82}
A.~Lascoux and M.-P. Sch\"{u}tzenberger.
\newblock Polyn\^{o}mes de {S}chubert.
\newblock {\em C. R. Acad. Sci. Paris S\'{e}r. I Math.}, 294(13):447--450,
  1982.

\bibitem{Liu16}
G.~Liu.
\newblock Mixed volumes of hypersimplices.
\newblock {\em Electron. J. Combin.}, 23(3):Paper 3.19, 19, 2016.

\bibitem{Lothaire}
M.~Lothaire.
\newblock {\em Combinatorics on words}, volume~17 of {\em Encyclopedia of
  Mathematics and its Applications}.
\newblock Addison-Wesley Publishing Co., Reading, Mass., 1983.

\bibitem{LothaireApplied}
M.~Lothaire.
\newblock {\em Applied combinatorics on words}, volume 105 of {\em Encyclopedia
  of Mathematics and its Applications}.
\newblock Cambridge University Press, Cambridge, 2005.

\bibitem{Macdonald}
I.~G. Macdonald.
\newblock {\em Notes on Schubert polynomials}.
\newblock Publications du LaCIM, 1991.

\bibitem{Man01}
L.~Manivel.
\newblock {\em Symmetric functions, {S}chubert polynomials and degeneracy
  loci}, volume~6 of {\em SMF/AMS Texts and Monographs}.
\newblock American Mathematical Society, Providence, RI; Soci\'{e}t\'{e}
  Math\'{e}matique de France, Paris, 2001.
\newblock Translated from the 1998 French original by John R. Swallow, Cours
  Sp\'{e}cialis\'{e}s [Specialized Courses], 3.

\bibitem{Mar01}
B.~H. Margolius.
\newblock Permutations with inversions.
\newblock {\em J. Integer Seq.}, 4(2):Article 01.2.4, 13, 2001.

\bibitem{Mo59}
D.~Monk.
\newblock The geometry of flag manifolds.
\newblock {\em Proc. London Math. Soc. (3)}, 9:253--286, 1959.

\bibitem{MPPa19}
A.~H. Morales, I.~Pak, and G.~Panova.
\newblock Hook formulas for skew shapes {III}. {M}ultivariate and product
  formulas.
\newblock {\em Algebr. Comb.}, 2(5):815--861, 2019.

\bibitem{DS}
P.~Nadeau and V.~Tewari.
\newblock Divided symmetrization and quasisymmetric functions, 2019,
  arXiv:1908.10934.

\bibitem{NT21}
P.~Nadeau and V.~Tewari.
\newblock {M}acdonald's reduced word formula and a $q$-deformation of
  {K}lyachko's algebra, 2021, in preparation.

\bibitem{Pet18}
F.~Petrov.
\newblock Combinatorial and probabilistic formulae for divided symmetrization.
\newblock {\em Discrete Math.}, 341(2):336--340, 2018.

\bibitem{Pos09}
A.~Postnikov.
\newblock Permutohedra, associahedra, and beyond.
\newblock {\em Int. Math. Res. Not. IMRN}, (6):1026--1106, 2009.

\bibitem{PS09}
A.~Postnikov and R.~P. Stanley.
\newblock Chains in the {B}ruhat order.
\newblock {\em J. Algebraic Combin.}, 29(2):133--174, 2009.

\bibitem{Pro90}
C.~Procesi.
\newblock The toric variety associated to {W}eyl chambers.
\newblock In {\em Mots}, Lang. Raison. Calc., pages 153--161. Herm\`es, Paris,
  1990.

\bibitem{Ri03}
K.~Rietsch.
\newblock Totally positive {T}oeplitz matrices and quantum cohomology of
  partial flag varieties.
\newblock {\em J. Amer. Math. Soc.}, 16(2):363--392, 2003.

\bibitem{ShWa12}
J.~Shareshian and M.~L. Wachs.
\newblock Chromatic quasisymmetric functions and {H}essenberg varieties.
\newblock In {\em Configuration spaces}, volume~14 of {\em CRM Series}, pages
  433--460. Ed. Norm., Pisa, 2012.

\bibitem{ShWa16}
J.~Shareshian and M.~L. Wachs.
\newblock Chromatic quasisymmetric functions.
\newblock {\em Adv. Math.}, 295:497--551, 2016.

\bibitem{oeis}
N.~J.~A. Sloane.
\newblock The encyclopedia of integer sequences.

\bibitem{St84}
R.~P. Stanley.
\newblock On the number of reduced decompositions of elements of {C}oxeter
  groups.
\newblock {\em European J. Combin.}, 5(4):359--372, 1984.

\bibitem{St97}
R.~P. Stanley.
\newblock {\em Enumerative combinatorics. {V}ol. 1}, volume~49 of {\em
  Cambridge Studies in Advanced Mathematics}.
\newblock Cambridge University Press, Cambridge, 1997.
\newblock With a foreword by Gian-Carlo Rota, Corrected reprint of the 1986
  original.

\bibitem{St99}
R.~P. Stanley.
\newblock {\em Enumerative combinatorics. {V}ol. 2}, volume~62 of {\em
  Cambridge Studies in Advanced Mathematics}.
\newblock Cambridge University Press, Cambridge, 1999.
\newblock With a foreword by Gian-Carlo Rota and appendix 1 by Sergey Fomin.

\bibitem{Ty07}
J.~S. Tymoczko.
\newblock Paving {H}essenberg varieties by affines.
\newblock {\em Selecta Math. (N.S.)}, 13(2):353--367, 2007.

\bibitem{Wac85}
M.~L. Wachs.
\newblock Flagged {S}chur functions, {S}chubert polynomials, and symmetrizing
  operators.
\newblock {\em J. Combin. Theory Ser. A}, 40(2):276--289, 1985.

\bibitem{Wei18}
A.~E. Weigandt.
\newblock Schubert polynomials, 132-patterns, and {S}tanley's conjecture.
\newblock {\em Algebraic Combinatorics}, 1(4):415--423, 2018.

\bibitem{woo04}
A.~Woo.
\newblock Catalan numbers and {S}chubert polynomials for $w=1(n+1)... 2$, 2004,
  arXiv:math/0407160.

\end{thebibliography}

\end{document}